
\documentclass[onefignum,onetabnum]{siamart171218}


\usepackage{lipsum}
\usepackage{amsfonts}

  \usepackage{caption}
    \usepackage{subcaption}
    \usepackage{graphicx}

\usepackage{epstopdf}
\usepackage{algorithmic}
\ifpdf
  \DeclareGraphicsExtensions{.eps,.pdf,.png,.jpg}
\else
  \DeclareGraphicsExtensions{.eps}
\fi


\newsiamremark{remark}{Remark}
\newsiamremark{assumption}{Assumption}
\crefname{hypothesis}{Hypothesis}{Hypotheses}
\newsiamthm{claim}{Claim}

\headers{Distributed learning with compressed gradients}{S. Khirirat, H. R. Feyzmahdavian, and M. Johansson}

\title{Distributed learning with compressed gradients\thanks{Submitted to the editors DATE. Work in progress.
\funding{This work was partially supported by the Wallenberg AI, Autonomous Systems and Software Program (WASP) funded by the Knut and Alice Wallenberg Foundation.}}}

\author{Sarit Khirirat\thanks{ Automatic Control Department, Royal Institute of Technology (KTH), Stockholm, Sweden
  (\email{sarit@kth.se,mikaelj@kth.se}).}
\and Hamid~Reza~Feyzmahdavian\thanks{ABB Corporate Research Center, Västerås, Sweden 
  (\email{hamid.feyzmahdavian@se.abb.com}).}
\and  Mikael~Johansson, \footnotemark[2]}

\usepackage{amsopn}
\DeclareMathOperator{\diag}{diag}







\begin{document}

\maketitle

\begin{abstract}
Asynchronous computation and gradient compression have emerged as two key techniques for achieving scalability in distributed optimization for large-scale machine learning. This paper presents a unified analysis framework for distributed gradient methods operating with staled and compressed gradients. Non-asymptotic bounds on convergence rates and information exchange are derived for several optimization algorithms. These bounds give explicit expressions for step-sizes and characterize how the amount of asynchrony and the compression accuracy affect iteration and communication complexity guarantees. Numerical results highlight convergence properties of different gradient compression algorithms and confirm that fast convergence under limited information exchange is indeed possible.    
\end{abstract}

\begin{keywords}
 first-order methods, convergence analysis, large-scale optimization
\end{keywords}

\begin{AMS}
90C30, 90C06, 90C25
\end{AMS}

\section{Introduction}
  Several problems in machine learning involve empirical risk minimization and can be cast as separable optimization problems
    \begin{equation}
    \mathop {\rm minimize}\limits_{x\in\mathbb{R}^d} f(x) = \sum_{i=1}^m f_i(x).
    \label{eqn:Problem}    
    \end{equation}
    Here, each component function $f_i:\mathbb{R}^d\rightarrow\mathbb{R}$ represents the loss for a single data point or subset of data points, and is assumed to be smooth and have a Lipschitz-continuous gradient.     
    The standard first-order method for solving \eqref{eqn:Problem} is \emph{gradient descent} (GD)
    \begin{equation}
    x_{k+1} = x_k - \gamma_k \sum_{i=1}^m \nabla f_i( x_{k} )
    \label{eqn:GD}  
    \end{equation}
	for some positive step-size $\gamma_k$. 
	However, when the number of component functions  $m$ is extremely large, the computation cost per iteration of GD becomes significant, and one typically resorts to \emph{stochastic gradient descent} (where the gradient is evaluated at a single randomly chosen data point in every iteration) or leverages on data-parallelism by distributing the gradient computations on multiple parallel machines; see, \emph{e.g.}~\cite{shamir2016without,SGDNeedell2014,zhang2014asynchronous,li2013parameter}. 
	The latter option leads to \emph{master-server} architectures, where a central master node maintains the current parameter iterate and workers evaluate gradients of the loss on individual subsets of the global data. There are both \emph{synchronous} and \emph{asynchronous} versions of this master-worker architecture.

	     In the \emph{synchronous} master-worker architecture, the master node waits for all the gradients computed by the workers before it makes an update~\cite{zhang2014asynchronous,chen2016revisiting}. Insisting on a synchronous operation leads to long communication times (waiting for the slowest worker to complete) and the benefits of parallelization diminish as the number of workers increases. \emph{Asynchronous} master-worker architectures, such as  parameter server~\cite{li2013parameter}, attempt to alleviate this bottleneck by letting the master update its parameters every time it receives new information from a worker. Since the workers now operate on inconsistent data, the training accuracy may degrade and there is a risk that the optimization process diverges. 
	    	
	    The natural implementation of distributed gradient descent in the parameter server framework is referred to as~\emph{incremental aggregate gradient} (IAG)~\cite{blatt2007convergent}. Given an initial point $x_0$ and a step-size $\gamma$, the master executes the updates
	    
	    \begin{equation}
	    x_{k+1} = x_k - \gamma \sum_{i=1}^m\nabla f_i( x_{k-\tau_k^i} ). 
	    \label{eqn:IAG}    
	    \end{equation}
		Here $\tau_k^i$ describes the staleness of the gradient information from worker $i$ available to the master at iteration $k$. Under the assumption of bounded staleness,  
	    $\tau_k^i \leq \tau$ for all $k,i$, convergence guarantees for IAG have been established for several classes of loss functions, see \emph{e.g.}  \cite{blatt2007convergent,gurbuzbalaban2017convergence,tseng2014incrementally,aytekin2016analysis}. 
	    
	     A drawback with the master-worker architecture is the massive amount of data exchanged between  workers and master. This is especially true when the parameter dimension $d$ is large and we try to scale up the number of worker machines $m$. 
	    Recently, several authors have proposed various gradient compression algorithms for reducing the network cost in distributed machine learning~\cite{wangni2017gradient,alistarh2016qsgd,de2015taming,wen2017terngrad,magnusson2017convergence}. The compression algorithms can be both randomized~\cite{wangni2017gradient,alistarh2016qsgd,wen2017terngrad} and deterministic~\cite{alistarh2016qsgd,magnusson2017convergence}, and empirical studies have demonstrated that they can yield significant savings in network traffic~\cite{wangni2017gradient,alistarh2016qsgd,wen2017terngrad}. However, the vast majority of the work on gradient compression do not provide convergence guarantees, and the few convergence results that exist often make restrictive assumptions, \emph{e.g.} that component function gradients are uniformly bounded in norm. Even though this assumption is valid for a certain classes of optimization problems, it is always violated when the objective function is strongly convex \cite{nguyen2018sgd}. In addition, the theoretical support for quantifying the trade-off between iteration and communication complexity is limited, and there are very few general results which allow to characterize the impact of different compression strategies on the convergence rate guarantees.

    \textbf{Contributions.} We establish a unified framework for both synchronous and asynchronous distributed optimization using compressed gradients. The framework builds on unbiased randomized quantizers (URQs), a class of gradient compression schemes which cover the ones proposed in \cite{wangni2017gradient,alistarh2016qsgd}. We establish per-iteration convergence rate guarantees for both GD and IAG with URQ compression. The convergence rate guarantees give explicit formulas for how quantization accuracy and staleness bounds affect the expected time to reach an $\varepsilon$-optimal solution. These results allows us to characterize the trade-off between iteration and communication complexity under gradient compression. Finally, we validate the theoretical results on large-scale parameter estimation problems.

    \textbf{Related work.} Although the initial results on communication complexity of convex optimization appeared over 30 years ago~\cite{tsitsiklis1987communication}, the area has attracted strong renewed interest due to the veritable explosion of data and parameter sizes in deep learning. Several heuristic gradient compression techniques have been proposed and evaluated empirically~\cite{wangni2017gradient,wen2017terngrad,1-bit-stochastic-gradient-descents}. Most compression schemes are based on sparsification~\cite{wangni2017gradient}, quantization~\cite{wen2017terngrad,magnusson2017convergence}, or combinations of the two~\cite{alistarh2016qsgd}; they are either randomized~\cite{wangni2017gradient,wen2017terngrad} or deterministic~\cite{alistarh2016qsgd}. While the majority of papers on gradient compression have a practical focus, several recent works establish theoretical convergence guarantees for gradient compression. In some cases, convergence guarantees are asymptotic, while other papers provide non-asymptotic bounds. The work which is most closely related to the present paper is~\cite{alistarh2016qsgd} and~\cite{de2015taming}. In particular~\cite{alistarh2016qsgd} proposes a low-precision quantizer and derives non-asymptotic convergence guarantees for (synchronous) stochastic gradient descent, while~\cite{de2015taming} introduces an analysis framework based on rate-supermartingales and develops probabilistic guarantees for quantized SGD.

    %
    %
    \section{Notations and  Assumptions}
    We let $\mathbb{N}, \mathbb{N}_0$ be a set of natural numbers and of natural numbers including zero. For any integers $a,b$ with $a\leq b$, $[a,b] = \{a,a+1,\ldots, b-1 ,b \}$. For a vector $x\in\mathbb{R}^d$, $x^i$ denotes its $i^{\rm th}$ element, ${\rm sign}(x^i)$ the sign of its $i^{\rm th}$ element, and ${\rm sign}(x)$ is its sign vector;  $\|x\|_0$ denotes the $\ell_0$ norm of $x$ or the number of its non-zero elements, $\|x\|$ is its Euclidean norm, and ${\rm supp}(x)$ is its support set, \emph{i.e.} 
    \begin{align*}
        {\rm supp}(x) = \{ i \ | \ x^i \neq 0 \}.
    \end{align*}

    In addition, we impose the following typical assumptions on Problem \eqref{eqn:Problem}.
    \begin{assumption}
    \label{assum:FiLipschitz}
    Each $f_i:\mathbb{R}^d \rightarrow \mathbb{R}$ is convex and has Lipschitz continuous gradient with $L$, i.e $ \forall x,y\in \mathbb{R}^d$
    \begin{align*}
     f_i(x) + \langle \nabla f_i(x),y-x \rangle   \leq  f_i(y) \leq f_i(x) + \langle \nabla f_i(x),y-x\rangle + \frac{L}{2}\| y-x\|^2.
    \end{align*}
    \end{assumption}
    Note that Assumption \ref{assum:FiLipschitz} implies that $f$ also has Lipschitz continuous gradient with $\bar L \leq m L$.

 \begin{assumption}
    \label{assum:mustronglyconvex}
    The function $f:\mathbb{R}^d\rightarrow\mathbb{R}$ is $\mu-$strongly convex, \emph{i.e.} there exits $\mu>0$ such that
    \begin{align*}
        f(y) \geq f(x) + \langle \nabla f(x),y-x \rangle + \frac{\mu}{2}\| y-x \|^2 && \forall x,y \in \mathbb{R}^d.
    \end{align*}
    \end{assumption}

If the data sparsity pattern is exploited, then one can often derive a smaller Lipschitz constant, which allows larger step-sizes and faster algorithm convergence. Although it is difficicult to quantify the gradient sparsity for general loss functions, it is possible to do so under the following additional assumptions.
    \begin{assumption}\label{assum:SparsityEmpiricalLoss}
    	Each $f_i:\mathbb{R}^d \rightarrow \mathbb{R}$ can be written as \(
    	f_i(x) = \ell(a_i^Tx,b_i), 
    	\) 
    	such that ${\rm supp}(\nabla f_i(x)) = {\rm supp}(a_i)$ for given data $\left\{(a_i,b_i)\right\}_{i=1}^m$ with $a_i\in\mathbb{R}^d$ and $b_i\in\mathbb{R}$. 
    \end{assumption}
    
    Assumption \ref{assum:SparsityEmpiricalLoss}, which is satisfied for standard empirical risk minimization problems, implies that the sparsity pattern of component function gradients can be computed off-line directly from the data. We will consider two important sparsity measures: the average and maximum conflict graph degree of the data, defined as
    \begin{align*}
    \Delta_{\rm ave}  &=  \frac{1}{m}\sum_{i=1}^m \left\{\sum_{j=1, j\neq i}^m
    \mathbf{1}\{ \mbox{supp}(a_i) \cap \mbox{supp} (a_j) \neq \emptyset \}\right\} \\  
    \Delta_{\max} & =  \max_{i\in [1, m]}    \left\{ \sum_{j=1, j\neq i}^m
    \mathbf{1}\{ \mbox{supp}(a_i) \cap \mbox{supp} (a_j) \neq \emptyset \} \right\}.
    \end{align*}
    As shown next, these sparsity measures allow us to derive a tighter bound $\bar{L}$ for the Lipschitz constant  of the total loss:
    \begin{lemma}
    	\label{lemma:LipschitzWholeFcn}
    	Consider the optimization problem \eqref{eqn:Problem} under Assumption~\ref{assum:SparsityEmpiricalLoss}. If $\ell$ has $L$-Lipschitz continuous gradient, then the gradient of the total loss is $\bar L$-Lipschitz continuous with \[
    	\bar L =  L\sqrt{m(1+\Delta)}, 
    	\]
    	where $\Delta = {\min}(\Delta_{\rm ave},\Delta_{\max})$. 
    \end{lemma}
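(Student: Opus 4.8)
The plan is to work directly with the squared gradient difference and to exploit the sparsity encoded in the conflict graph. First I would use the composite structure of Assumption~\ref{assum:SparsityEmpiricalLoss} to write $\nabla f_i(x) = \ell'(a_i^T x, b_i)\, a_i$, so that $\nabla f_i(x) - \nabla f_i(y) = c_i a_i$ with $c_i := \ell'(a_i^T x, b_i) - \ell'(a_i^T y, b_i)$. Since $\ell$ has $L$-Lipschitz gradient in its scalar first argument, this yields the pointwise bound $|c_i| \le L\,|a_i^T(x-y)| =: L\,|z_i|$, where $z = A(x-y)$ and $A$ is the matrix with rows $a_i^T$. Expanding the full gradient difference gives
\[
\|\nabla f(x) - \nabla f(y)\|^2 = \Big\| \sum_{i=1}^m c_i a_i \Big\|^2 = \sum_{i,j} c_i c_j \langle a_i, a_j\rangle .
\]

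The key structural observation is that $\langle a_i, a_j\rangle = 0$ whenever $\mathrm{supp}(a_i) \cap \mathrm{supp}(a_j) = \emptyset$, so only the diagonal terms and the off-diagonal terms corresponding to edges of the conflict graph survive. For the off-diagonal survivors I would combine Cauchy--Schwarz (using the normalization $\|a_i\| \le 1$) with Young's inequality, $|c_i c_j \langle a_i, a_j\rangle| \le L^2 |z_i||z_j| \le \tfrac{L^2}{2}(z_i^2 + z_j^2)$. Assigning the two squared terms to the two endpoints of each edge, the off-diagonal contribution reduces to $L^2 \sum_i d_i z_i^2$, where $d_i$ is the degree of node $i$ in the conflict graph. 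Adding the diagonal part $\sum_i c_i^2\|a_i\|^2 \le L^2 \sum_i z_i^2$ produces the single unified estimate
\[
\|\nabla f(x) - \nabla f(y)\|^2 \le L^2 \sum_{i=1}^m (1 + d_i)\, z_i^2 .
\]

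From this one inequality both halves of $\Delta = \min(\Delta_{\mathrm{ave}}, \Delta_{\max})$ follow by aggregating differently. Factoring out the largest weight gives $\sum_i (1+d_i) z_i^2 \le (1+\Delta_{\max}) \sum_i z_i^2 \le (1+\Delta_{\max})\, m\,\|x-y\|^2$, using $\sum_i z_i^2 = \sum_i |a_i^T(x-y)|^2 \le \sum_i \|a_i\|^2 \|x-y\|^2 \le m\|x-y\|^2$. Alternatively, bounding each $z_i^2 \le \|a_i\|^2\|x-y\|^2 \le \|x-y\|^2$ \emph{first} and then summing the weights gives $\sum_i (1+d_i) z_i^2 \le \big(\sum_i (1+d_i)\big)\|x-y\|^2 = m(1+\Delta_{\mathrm{ave}})\|x-y\|^2$, since $\sum_i d_i = m\Delta_{\mathrm{ave}}$. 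Taking the smaller of the two constants and square-rooting delivers $\|\nabla f(x)-\nabla f(y)\| \le L\sqrt{m(1+\Delta)}\,\|x-y\|$.

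I expect the main obstacle to be the bookkeeping in the cross-term step: one must check that orthogonality of non-overlapping component gradients is exactly conflict-graph non-adjacency, and that the Young splitting sends each of $z_i^2,z_j^2$ to the correct endpoint so that the edge sum collapses cleanly to $\sum_i d_i z_i^2$ without double counting. A secondary point worth stating explicitly is the normalization $\|a_i\| \le 1$ (equivalently, absorbing $\max_i\|a_i\|$ into $L$), which is what eliminates any explicit dependence on the data magnitudes and leaves the clean constant $L\sqrt{m(1+\Delta)}$.
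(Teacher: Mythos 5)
Your proof is correct, and it reaches the paper's bound by a route that differs in its mechanics even though the skeleton is the same: expand the squared sum, discard cross terms between samples with disjoint supports, and count the survivors through the conflict-graph degrees. The paper stays at the level of the gradient vectors: it applies Cauchy--Schwarz to each surviving cross term $\langle \nabla f_i(x)-\nabla f_i(y),\,\nabla f_j(x)-\nabla f_j(y)\rangle$, bounds every factor uniformly by $L\|x-y\|$, and arrives at $L^2\bigl(m+\sum_i\sum_{j\neq i}e_{i,j}\bigr)\|x-y\|^2$, from which the $\Delta_{\rm ave}$ branch is an identity and the $\Delta_{\max}$ branch an inequality. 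You instead exploit the rank-one structure $\nabla f_i(x)-\nabla f_i(y)=c_i a_i$ to scalarize everything into $z_i=a_i^T(x-y)$ and use Young's inequality to fold the cross terms into the weighted-degree bound $L^2\sum_i(1+d_i)z_i^2$; this intermediate inequality is strictly sharper than the paper's (it retains each $z_i^2\le\|x-y\|^2$ and $\sum_i z_i^2=\|A(x-y)\|^2$ individually rather than bounding everything by $\|x-y\|^2$ at once), and both branches then follow merely by choosing the order of aggregation. The bookkeeping you flagged as the main risk does collapse cleanly: summing $\frac{1}{2}(z_i^2+z_j^2)$ over ordered conflicting pairs gives exactly $\sum_i d_i z_i^2$, with no double counting. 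One further point in your favor: you state explicitly the normalization $\|a_i\|\le 1$ (equivalently, absorbing $\max_i\|a_i\|^2$ into $L$) that the constant $L\sqrt{m(1+\Delta)}$ requires. The paper's own proof passes silently from the hypothesis that $\ell$ has $L$-Lipschitz gradient to the claim that each $\nabla f_i$ is $L$-Lipschitz, which is only valid under that same normalization (in general one only gets $\|\nabla f_i(x)-\nabla f_i(y)\|\le L\|a_i\|^2\|x-y\|$), so your version is the more careful one.
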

    \begin{proof}
    	See Appendix \ref{app:lemma:LipschitzWholeFcn}.
    \end{proof}
    
    These sparsity measures are used to tighten our convergence results, especially in  Section \ref{sec:DQGD} and \ref{sec:DIAG}.

    %
    %
    \section{Unbiased random quantization} \label{sec:UnbiasedQuantizer}
    
    In this paper, we are interested in optimization using unbiased randomized quantizers (URQs):  
    
    \begin{definition}
    \label{def:UnbiasedRandQuant}
    A mapping $Q:\mathbb{R}^d \rightarrow\mathbb{R}^d$ is called an \emph{unbiased random  quantizer} if, for every $v \in \mathbb{R}^d$,
    %
    \begin{enumerate}
	    \item $\mathrm{supp}(Q(v)) \subseteq \mathrm{supp}(v)$
	    \item $\mathbf{E}\{Q(v)\} = v$
        \item $\mathbf{E}\{ \| Q(v) \|^2 \} \leq \alpha \| v \|^2 $ 
    \end{enumerate}
    for some finite positive $\alpha$. In addition, $Q$ is said to be \emph{sign-preserving} if 
    \begin{align*}
       [Q(v)]^i v^i \geq 0 
    \end{align*}
    for every $v\in \mathbb{R}^d$ and $i\in [1,d]$.
    \end{definition}
    
    Unbiased random quantizers  satisfy some additional useful inequalities. First, 
    \begin{align*}
    \mathbf{E}\, \{ \Vert Q(v)\Vert_0\} &\leq c, 
    \intertext{for any $ v\in \mathbb{R}^d$ and a finite positive constant {\color{black}$c\leq d$}. The sign-preserving property guarantees the same direction between the compressed vector and the full one.  This property of $Q$ also implies that}
    	{ \mathbf{E}\, } \Vert Q(v) - v  \Vert{^2} &\leq \beta  \Vert v\Vert^2, 
    \end{align*}
    for any $ v\in \mathbb{R}^d$ and a finite positive constant $\beta  \leq \alpha-1$. As we will show next, it is typically possible to derive better bounds for $c$ and $\beta$ when we consider specific classes of gradient compressors.
     
     \subsection{Examples of unbiased random quantizers}
    Several randomized gradient compression algorithms have been proposed for distributed optimization problems under limited communications. Important examples include the \emph{gradient sparsifier}~\cite{wangni2017gradient}, the  \emph{low-precision quantizer}~\cite{alistarh2016qsgd}  and the \emph{ternary quantizer}~\cite{wen2017terngrad} defined below.

    %
    %
    \begin{definition}
    \label{def:GradientSparsifier}
    The gradient sparsifier $S:\mathbb{R}^d \rightarrow\mathbb{R}^d$ is defined as 
    \begin{align*}
        {S^i(v)} = \left\{ \begin{array}{rl}  v^i/{ p^i} & \mbox{with probability } { p^i}  \\ 0 & \mbox{otherwise}\end{array} \right. , 
    \end{align*}
    where ${ p^i}$ is probability that coordinate $i$ is selected.
    \end{definition}
 
     Note that when the gradient sparsifier uses the same probability for each coordinate, it will effectively result in a randomized coordinate descent. Choosing $p^i=\vert v^i\vert/\Vert v\Vert$, on the other hand, will result in the ternary quantizer~\cite{wen2017terngrad}:
    
        \begin{definition}
    	\label{def:TernaryQuant}
    	The ternary quantizer $T:\mathbb{R}^d \rightarrow\mathbb{R}^d$ is defined as 
    	\begin{align*}
    	{T^i(v)} = \left\{ \begin{array}{rl}  \| v \| {\rm sign}(v^i) & \mbox{with probability } {|v^i|}/{\|v\|}  \\ 0 & \mbox{otherwise}\end{array} \right. . 
    	\end{align*}
    	\end{definition}
    	
    %
    %
     The low-precision quantizer~\cite{alistarh2016qsgd}, defined next, combines sparsification of the gradient vector with quantization of its element to further reduce the amount of information exchanged. 
    \begin{definition}
    \label{def:DanQuant}
    The low-precision quantizer $Q_b:\mathbb{R}^d \rightarrow\mathbb{R}^d$ is defined as 
    \begin{align*}
        {Q_b^i(v)} &= \|v\| {\rm sign}(v^i)\xi(v,i,s),  
    \intertext{where}
         \xi(v,i,s) &=  \left\{ \begin{array}{rl}  {l}/{s} & \mbox{with probability } 1-p\left({|v^i|}/{\|v \|},s\right) \\ {(l+1)}/{s} & \mbox{otherwise}   \end{array}   \right.,  
    \end{align*}
    and $p(a,s)  = as - l \mbox{ for any } a\in [0,1]$. Here,  $s$ is the number of quantization levels distributed between $0$ and $1$, and $l\in [0,s)$ such that ${|v^i|}/{\|v \|} \in [l/s,(l+1)/s]$. 
    \end{definition}

    Notice that when we let 
    $s=1$ (and hence $l=0$) in Definition \ref{def:DanQuant}, the low-precision quantizer also reduces to  the ternary quantizer defined above.    It is easily shown that these quantizers are sign-preserving unbiased random quantizers. Specifically, we have the following results:
    
    %
    %
     \begin{proposition}[\cite{wangni2017gradient}]
    \label{proposition:GradientSparsifier}
    The gradient sparisifier $S:\mathbb{R}^d\rightarrow \mathbb{R}^d$ is a sign-preserving URQ, which satisfies 
    
    \begin{enumerate}
        \item $\mathbf{E}\{ \| S(v) \|^2 \} \leq (1/p_{\min}) \| v \|^2$ where $p_{\min}= \mathop{\min}\limits_{i\in[1,d]}p^i$ , and
        \item $\mathbf{E}\{\| S(v)) \|_0\} =\sum_{i=1}^d p^i.$ 
    \end{enumerate}
    
    \end{proposition}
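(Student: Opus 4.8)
The plan is to verify each claimed property directly from the coordinate-wise description of $S$ in Definition \ref{def:GradientSparsifier}, exploiting the fact that the quantizer acts on each coordinate through a single Bernoulli trial. Since every assertion in the proposition is the expectation of a sum over coordinates, the whole argument reduces to computing the first and second moments of the scalar random variable $S^i(v)$ and then summing over $i$ by linearity of expectation; in particular no independence across coordinates is required.

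First I would confirm the structural requirements of Definition \ref{def:UnbiasedRandQuant}. The support inclusion $\mathrm{supp}(S(v))\subseteq\mathrm{supp}(v)$ holds deterministically, because $v^i=0$ forces $S^i(v)=0$. Sign-preservation is immediate from
\[
[S(v)]^i v^i = \frac{(v^i)^2}{p^i}\,\mathbf{1}\{i \text{ selected}\}\geq 0.
\]
For unbiasedness I would compute the coordinate expectation
\[
\mathbf{E}\{S^i(v)\} = p^i\cdot\frac{v^i}{p^i} + (1-p^i)\cdot 0 = v^i,
\]
so that $\mathbf{E}\{S(v)\}=v$, establishing property 2 of the definition.

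Next, for the second-moment statements I would evaluate
\[
\mathbf{E}\{(S^i(v))^2\} = p^i\left(\frac{v^i}{p^i}\right)^2 = \frac{(v^i)^2}{p^i},
\]
and sum over coordinates to get $\mathbf{E}\{\|S(v)\|^2\}=\sum_{i=1}^d (v^i)^2/p^i$. Bounding each factor $1/p^i$ by $1/p_{\min}$ then yields $\mathbf{E}\{\|S(v)\|^2\}\leq (1/p_{\min})\|v\|^2$, which simultaneously proves part 1 and certifies that $S$ is a URQ in the sense of property 3 with $\alpha=1/p_{\min}$. Finally, for part 2 I would write $\|S(v)\|_0=\sum_{i=1}^d \mathbf{1}\{S^i(v)\neq 0\}$ and note that $\mathbf{E}\{\mathbf{1}\{S^i(v)\neq 0\}\}=p^i$ for each active coordinate, so linearity of expectation gives $\mathbf{E}\{\|S(v)\|_0\}=\sum_{i=1}^d p^i$.

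I do not anticipate any genuine obstacle, as every step is an elementary Bernoulli computation once the per-coordinate structure is isolated. The only point requiring care is the treatment of vanishing coordinates: when $v^i=0$ the indicator $\mathbf{1}\{S^i(v)\neq 0\}$ is identically zero rather than Bernoulli$(p^i)$, so the identity in part 2 is correctly read under the convention that the probabilities $p^i$ are attached only to coordinates in $\mathrm{supp}(v)$ (equivalently, the sum ranges over active coordinates). With this understood, the proof is purely a matter of assembling the coordinate-wise moments.
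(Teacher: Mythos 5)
Your proof is correct, but note that there is no in-paper proof to compare it against: the paper states this proposition with a citation to \cite{wangni2017gradient} and offers no argument of its own. Your coordinate-wise Bernoulli computation is the natural (and essentially unique) way to verify it: support inclusion and sign preservation are deterministic, unbiasedness and the second-moment bound follow from the first two moments of the scalar $S^i(v)$, and both enumerated parts follow by linearity of expectation, with $\alpha = 1/p_{\min}$ certifying the URQ property. You were also right to isolate the one delicate point, which the proposition's statement glosses over: when $v^i = 0$ the coordinate $S^i(v)$ vanishes identically, so the indicator $\mathbf{1}\{S^i(v)\neq 0\}$ is not Bernoulli$(p^i)$, and in general one only has $\mathbf{E}\{\|S(v)\|_0\} = \sum_{i\in\mathrm{supp}(v)} p^i \leq \sum_{i=1}^d p^i$, with equality exactly when $v$ has full support (or under the convention you describe). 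This discrepancy is harmless for the paper, since all downstream complexity results (\emph{e.g.} Corollaries \ref{corr:SCSerialQGD} and \ref{corr:QGDConvex}) use only an upper bound $\mathbf{E}\{\|Q(v)\|_0\} \leq c$, never the exact equality.
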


    %
    %
    \begin{proposition}[Lemma 3.4 in \cite{alistarh2016qsgd}]
    \label{proposition:DanQuant}
    The low-precision quantizer $Q_b:\mathbb{R}^d \rightarrow\mathbb{R}^d$ is a sign-preserving URQ, which satisfies 
    \begin{enumerate}
        \item $\mathbf{E}\{ \| Q_b(v) \|^2 \} \leq \left(1+\min\left({d}/{s^2},{\sqrt{d}}/{s} \right)\right) \| v \|^2,$   and
        \item $\mathbf{E}\{\| Q_b(v)) \|_0\} \leq s(s+\sqrt{d})$.
    \end{enumerate}
    \end{proposition}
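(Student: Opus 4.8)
The plan is to verify the three defining URQ properties together with the sign-preserving property directly from Definition~\ref{def:DanQuant}, and then to obtain the two quantitative bounds from a single coordinate-wise second-moment computation. Throughout I fix $v\in\mathbb{R}^d$ and abbreviate $a_i=|v^i|/\|v\|$, $l_i=\lfloor a_i s\rfloor$ and $p_i=p(a_i,s)=a_i s-l_i\in[0,1)$, so that the scalar $\xi_i:=\xi(v,i,s)$ equals $l_i/s$ with probability $1-p_i$ and $(l_i+1)/s$ with probability $p_i$. The support condition and sign-preservation are immediate: if $v^i=0$ then $a_i=l_i=p_i=0$ and $Q_b^i(v)=0$ almost surely, while $Q_b^i(v)\,v^i=\|v\|\,\xi_i\,|v^i|\ge 0$ because $\xi_i\ge 0$.

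First I would compute the first two moments of $\xi_i$. A short calculation gives $\mathbf{E}\{\xi_i\}=(l_i+p_i)/s=a_i$, whence $\mathbf{E}\{Q_b^i(v)\}=\|v\|\,{\rm sign}(v^i)\,a_i=v^i$ and unbiasedness follows. For the second moment I would expand $\mathbf{E}\{\xi_i^2\}=(l_i/s)^2(1-p_i)+((l_i+1)/s)^2 p_i$ and simplify; subtracting $(\mathbf{E}\{\xi_i\})^2=a_i^2$ collapses the cross terms and yields the clean expression $\mathrm{Var}(\xi_i)=p_i(1-p_i)/s^2$, so that $\mathbf{E}\{(Q_b^i(v))^2\}=\|v\|^2\bigl(a_i^2+p_i(1-p_i)/s^2\bigr)$. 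Summing over $i$ and using the normalisation $\sum_{i=1}^d a_i^2=\|v\|^2/\|v\|^2=1$ gives
\[
\mathbf{E}\{\|Q_b(v)\|^2\}=\|v\|^2\Bigl(1+\frac{1}{s^2}\sum_{i=1}^d p_i(1-p_i)\Bigr).
\]
The minimum in property~1 then comes from bounding $\sum_i p_i(1-p_i)$ in two ways: the pointwise estimate $p_i(1-p_i)\le 1/4\le 1$ gives $\sum_i p_i(1-p_i)\le d$ and hence the term $d/s^2$, whereas $p_i(1-p_i)\le p_i\le a_i s$ together with Cauchy--Schwarz, $\sum_i a_i\le\sqrt{d}\,(\sum_i a_i^2)^{1/2}=\sqrt{d}$, gives $\sum_i p_i(1-p_i)\le s\sqrt{d}$ and hence the term $\sqrt{d}/s$. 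Keeping whichever is smaller proves the first bound.

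For the sparsity bound I would note that $Q_b^i(v)=0$ can occur only when the realised level is $0$, i.e. only for coordinates with $l_i=0$ (when $l_i\ge 1$ both candidate values $l_i/s$ and $(l_i+1)/s$ are strictly positive, so the coordinate is always retained). A coordinate with $l_i=0$ is retained with probability $p_i=a_i s$. Writing $I_1=\{i:l_i\ge 1\}$ and $I_0=\{i:l_i=0\}$,
\[
\mathbf{E}\{\|Q_b(v)\|_0\}=|I_1|+s\sum_{i\in I_0}a_i.
\]
Each $i\in I_1$ satisfies $a_i\ge 1/s$, so $|I_1|/s^2\le\sum_{i\in I_1}a_i^2\le 1$, giving $|I_1|\le s^2$; and $s\sum_{i\in I_0}a_i\le s\sum_{i=1}^d a_i\le s\sqrt{d}$ by the same Cauchy--Schwarz estimate. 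Adding these yields $\mathbf{E}\{\|Q_b(v)\|_0\}\le s^2+s\sqrt{d}=s(s+\sqrt{d})$.

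The main obstacle is organising the coordinate-wise second-moment computation so that the variance collapses cleanly; once $\mathrm{Var}(\xi_i)=p_i(1-p_i)/s^2$ is in hand, both the $\min$ in the variance bound and the sparsity count reduce to the elementary inequality $p_i(1-p_i)\le\min(1/4,\,a_i s)$ combined with the normalisation $\sum_i a_i^2=1$ used through Cauchy--Schwarz. Some care is needed on the boundary cases $v^i=0$ and $a_i s\in\mathbb{N}$, so that $l_i$ and $p_i$ are well defined, but in each such case $\xi_i$ is deterministic and the stated bounds are unaffected.
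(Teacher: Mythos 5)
Your proof is correct. Note that the paper does not prove this proposition at all --- it is stated as a citation of Lemma 3.4 in \cite{alistarh2016qsgd} --- so your argument supplies the omitted derivation, and it does so along essentially the same lines as the cited source: the coordinate-wise variance identity $\mathrm{Var}(\xi_i)=p_i(1-p_i)/s^2$ together with the two estimates $p_i(1-p_i)\le 1$ and $p_i(1-p_i)\le p_i\le a_i s$ (plus Cauchy--Schwarz, $\sum_i a_i\le\sqrt{d}$) gives the $\min(d/s^2,\sqrt{d}/s)$ bound, and the sparsity count splits coordinates into those with $l_i\ge 1$ (at most $s^2$ of them, since each has $a_i\ge 1/s$ and $\sum_i a_i^2=1$) and those with $l_i=0$ (retained with probability $a_i s$, contributing at most $s\sqrt{d}$ in expectation). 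Your handling of the boundary cases ($v^i=0$, $a_i s\in\mathbb{N}$) is also sound.
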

 
   Proposition \ref{proposition:GradientSparsifier} and \ref{proposition:DanQuant} both imply that  $\mathbf{E}\| Q(v) \|^2 $ is close to $ \| v \|^2$ if the URQs are sufficiently accurate; \emph{e.g.}, when we set $p^{i} = 1$ for all $i$ in the gradient sparsifier (we send the full vector) and when  we let $s\rightarrow\infty$ in the low-precision quantizer (we send the exact solution). Although the probability $p^i$ in the gradient sparsifier can be time-varying (\emph{e.g.}, when we set $p^i \propto v^i$) we  assume a time-invariant $\alpha$-value in the analysis below to simplify notation.

    \section{Convergence Analysis of Quantized Gradient Method}\label{sec:ConvQuanNonDist}
    
    In this section, we study the impact of gradient compression on the convergence rate guarantees for
the gradient descent algorithm. Although this single-master/single-worker architecture is of limited
practical interest, it complements and improves on earlier results (\emph{e.g.} [14]) and establishes a baseline
for the distributed master-worker architectures studied later. Explicit formulas for the iteration and
communication complexity of GD with URQ compression are also given.

    We start by considering the compressed GD algorithm
    \begin{equation}
        x_{k+1} = x_k - \gamma_k Q(\nabla f(x_k)),
        \label{eqn:QuantizedGD}
    \end{equation}    
    where $\gamma_k$ is a positive step size, and $Q:\mathbb{R}^d\rightarrow\mathbb{R}^d$ is  a URQ.  Throughout this section, we derive explicit expressions for how the variance bound $\alpha$ of the URQ affects admissible step-sizes and guaranteed convergence times. We  begin by considering strongly convex optimization problems.

    \begin{theorem}\label{thm:SCSerialQGD}
    Consider the optimization problem \eqref{eqn:Problem} under Assumption \ref{assum:FiLipschitz}, \ref{assum:mustronglyconvex} and \ref{assum:SparsityEmpiricalLoss}. Let $\bar L =L\sqrt{m(1+\Delta)}$ and $\Delta = {\min}(\Delta_{\rm ave},\Delta_{\max})$. Then, the iterates $\{x_k\}_{k\in \mathbb{N}}$ generated by \eqref{eqn:QuantizedGD} with $\gamma_k = (1/\alpha)\left(2/(\mu+\bar L)\right)$  satisfy 
    \begin{align*}
        \mathbf{E}\|x_{k} - x^\star \|^2 \leq  \rho^k \| x_0 - x^\star \|^2,
    \end{align*}
    where $\rho = 1-\frac{1}{\alpha}\frac{4\mu\cdot  \bar L}{(\mu+\bar L)^2}$.
    \end{theorem}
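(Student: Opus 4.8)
The plan is to run the standard Lyapunov argument on the squared distance to the optimum $x^\star$, using the unbiasedness and variance properties of the URQ to control the quantization randomness, and then exploiting a sharp co-coercivity inequality for strongly convex and smooth functions to make the chosen step-size cancel the gradient-norm term exactly. First I would expand
\begin{align*}
\|x_{k+1}-x^\star\|^2 = \|x_k-x^\star\|^2 - 2\gamma_k \langle Q(\nabla f(x_k)), x_k-x^\star\rangle + \gamma_k^2 \|Q(\nabla f(x_k))\|^2,
\end{align*}
and take the conditional expectation given $x_k$ over the quantizer's randomness. Property 2 of Definition \ref{def:UnbiasedRandQuant} replaces $Q(\nabla f(x_k))$ by $\nabla f(x_k)$ in the inner-product term, while property 3 bounds the last term by $\alpha\|\nabla f(x_k)\|^2$, yielding
\begin{align*}
\mathbf{E}\{\|x_{k+1}-x^\star\|^2 \mid x_k\} \leq \|x_k-x^\star\|^2 - 2\gamma_k \langle \nabla f(x_k), x_k-x^\star\rangle + \gamma_k^2 \alpha \|\nabla f(x_k)\|^2.
\end{align*}

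The key step is to lower-bound the inner product. Since $f$ is $\mu$-strongly convex (Assumption \ref{assum:mustronglyconvex}) and has $\bar L$-Lipschitz gradient (Lemma \ref{lemma:LipschitzWholeFcn}), I would invoke the classical strong-convexity/smoothness co-coercivity bound, applied at $x=x_k$, $y=x^\star$ with $\nabla f(x^\star)=0$:
\begin{align*}
\langle \nabla f(x_k), x_k-x^\star\rangle \geq \frac{\mu \bar L}{\mu+\bar L}\|x_k-x^\star\|^2 + \frac{1}{\mu+\bar L}\|\nabla f(x_k)\|^2.
\end{align*}
Substituting this and collecting terms gives a coefficient $1-2\gamma_k\frac{\mu\bar L}{\mu+\bar L}$ on $\|x_k-x^\star\|^2$ and a coefficient $\gamma_k(\gamma_k\alpha - \frac{2}{\mu+\bar L})$ on $\|\nabla f(x_k)\|^2$. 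The point of the prescribed step-size $\gamma_k=(1/\alpha)\,2/(\mu+\bar L)$ is precisely that $\gamma_k\alpha = 2/(\mu+\bar L)$, so the gradient-norm coefficient vanishes identically and the remaining contraction factor simplifies to exactly $\rho = 1-\frac{1}{\alpha}\frac{4\mu\bar L}{(\mu+\bar L)^2}$.

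Having established $\mathbf{E}\{\|x_{k+1}-x^\star\|^2 \mid x_k\} \leq \rho \|x_k-x^\star\|^2$, I would take total expectation (using the tower property) and unroll the recursion over $k$ to obtain the claimed geometric bound. The main obstacle is really just justifying and correctly orienting the co-coercivity inequality — that it holds for $\mu$-strongly convex, $\bar L$-smooth $f$, and that applying it at the optimum is what produces the negative gradient-norm term needed to absorb the quantization variance $\gamma_k^2\alpha\|\nabla f(x_k)\|^2$. Everything else is routine bookkeeping; note that the support property 1 and the sign-preserving property play no role here, since only unbiasedness and the second-moment bound enter the argument.
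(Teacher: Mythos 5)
Your proposal is correct and follows essentially the same route as the paper's own proof: expanding the squared distance to $x^\star$, applying the URQ unbiasedness and second-moment bound, invoking the strong-convexity/smoothness co-coercivity inequality (Nesterov's Theorem 2.1.12) at $y=x^\star$, and observing that the prescribed step-size $\gamma_k = 2/(\alpha(\mu+\bar L))$ kills the gradient-norm term, leaving the contraction factor $\rho$. The only cosmetic difference is that the paper states the gradient-norm coefficient is non-positive for all $\gamma_k \le 2/(\alpha(\mu+\bar L))$ while you note it vanishes exactly at the chosen value; the argument is otherwise identical.
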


    \begin{proof}
    See Appendix \ref{app:thm:SCSerialQGD}.
    \end{proof}

    One naive encoding of a vector processed by the URQ requires $c(\log_2 d + B)$ bits: $\log_2 d$ bits to represent each index and $B$ bits to represent the corresponding vector entry of $c$ non-zero values. Hence, Theorem \ref{thm:SCSerialQGD} yields the following iteration and communication complexity.  
    \begin{corollary}\label{corr:SCSerialQGD}
    Consider the optimization problem \eqref{eqn:Problem} under Assumption \ref{assum:FiLipschitz}, \ref{assum:SparsityEmpiricalLoss} and \ref{assum:mustronglyconvex}. Let $\bar L =L\sqrt{m(1+\Delta)}$ and $\Delta = {\min}(\Delta_{\rm ave},\Delta_{\max})$.  Given $\varepsilon_0 = \| x_0 - x^\star\|^2$, by running \eqref{eqn:QuantizedGD} with $\gamma_k = (1/\alpha)\left(2/(\mu+\bar L)\right)$  for at most 
    \begin{align*}
        k^\star &=  \alpha\frac{(\mu+\bar L)^2}{4\mu\bar L}{\rm log}\left({\varepsilon_0}/{\varepsilon}\right) \\ \intertext{iterations, under which}
        B^\star &= \left( {\rm log}_2d + B \right) c \cdot \alpha\frac{(\mu+\bar L)^2}{4\mu\bar L}{\rm log}\left({\varepsilon_0}/{\varepsilon}\right) 
    \end{align*}
    bits are sent, we ensure that $ \mathbf{E}\|x_{k} - x^\star \|^2 \leq \varepsilon$. Here $B$ is the number of bits required to encode a single vector entry and $\mathbb{E}\{ \Vert Q(v)\Vert_0\} \leq c$. 
    \end{corollary}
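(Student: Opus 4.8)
The plan is to derive both complexity bounds directly from the linear-rate estimate in Theorem~\ref{thm:SCSerialQGD}. Writing $\varepsilon_0 = \|x_0 - x^\star\|^2$, that theorem gives $\mathbf{E}\|x_k - x^\star\|^2 \leq \rho^k \varepsilon_0$ with $\rho = 1 - \frac{1}{\alpha}\frac{4\mu\bar L}{(\mu+\bar L)^2}$. To guarantee $\mathbf{E}\|x_k - x^\star\|^2 \leq \varepsilon$ it therefore suffices to force $\rho^k \varepsilon_0 \leq \varepsilon$, i.e. $\rho^k \leq \varepsilon/\varepsilon_0$. Since $\alpha \geq 1$ and $(\mu+\bar L)^2 \geq 4\mu\bar L$, the subtracted term lies in $(0,1]$, so $\rho \in [0,1)$; assuming the generic case $\rho \in (0,1)$ (so $\log\rho < 0$), taking logarithms shows the target is reached once $k \geq \log(\varepsilon_0/\varepsilon)/\log(1/\rho)$.

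The key step is to replace the awkward factor $1/\log(1/\rho)$ by a clean closed form. I would invoke the elementary inequality $\log(1/\rho) = -\log\!\big(1-(1-\rho)\big) \geq 1-\rho$, valid for $\rho \in (0,1)$, which yields $1/\log(1/\rho) \leq 1/(1-\rho)$. Hence every $k \geq \frac{1}{1-\rho}\log(\varepsilon_0/\varepsilon)$ is admissible. Substituting $1-\rho = \frac{1}{\alpha}\frac{4\mu\bar L}{(\mu+\bar L)^2}$ gives precisely $k^\star = \alpha\frac{(\mu+\bar L)^2}{4\mu\bar L}\log(\varepsilon_0/\varepsilon)$, establishing the iteration bound.

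For the communication bound, I would use the encoding cost quoted just before the statement: a vector produced by the URQ with $\|Q(v)\|_0$ nonzero entries can be transmitted using $\|Q(v)\|_0(\log_2 d + B)$ bits, namely $\log_2 d$ bits per index and $B$ bits per value. Each iteration of \eqref{eqn:QuantizedGD} transmits exactly one such quantized gradient, so the expected per-iteration cost is at most $c(\log_2 d + B)$ because $\mathbf{E}\{\|Q(v)\|_0\} \leq c$. Multiplying this per-iteration budget by the $k^\star$ iterations from the first part yields $B^\star = (\log_2 d + B)\,c\,k^\star$, as claimed.

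The only genuine subtlety is the logarithmic bound $-\log\rho \geq 1-\rho$ in the middle step, which converts the exact threshold into the stated closed form; everything else is bookkeeping. I would also take care to record explicitly that the step-size choice keeps $\rho$ in $(0,1)$, so that the rate is strictly contractive and the logarithm is well defined.
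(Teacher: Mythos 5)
Your proposal is correct and follows essentially the same route as the paper's proof: both invoke Theorem~\ref{thm:SCSerialQGD}, convert the threshold $\rho^k \varepsilon_0 \leq \varepsilon$ into the closed form via the same elementary bound (the paper writes it as $-1/\log(1-x) \leq 1/x$ for $0 < x \leq 1$, which is your $-\log\rho \geq 1-\rho$ with $x = 1-\rho$), and then multiply the per-iteration cost $c(\log_2 d + B)$ by $k^\star$ to get $B^\star$. Your explicit remark that the step-size choice keeps $\rho \in (0,1)$, and that the per-iteration bit count holds in expectation via $\mathbf{E}\{\|Q(v)\|_0\} \leq c$, is if anything slightly more careful bookkeeping than the paper's.
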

    \begin{proof}
    See Appendix \ref{app:corr:SCSerialQGD}.
    \end{proof}

    Theorem \ref{thm:SCSerialQGD} quantifies how the convergence guarantees depend on $\alpha$.  If the worker node sends the exact gradient, \emph{i.e.} $Q(\nabla f(x_k)) = \nabla f(x_k)$,  $\alpha = 1$  and Theorem \ref{thm:SCSerialQGD} recovers the convergence rate result of GD for strongly convex optimization with $\gamma_k = 2/(\mu+\bar L)$ presented in \cite{polyak1987introduction,nesterov2013introductory}. If the quantizer produces a less accurate vector (larger $\alpha$), then we must decrease the step size $\gamma_k$ to guarantee numerical stability, and accept that the $\varepsilon$-convergence times $T^{\star}$ will increase.  The results above can also be extended to convex optimization problems:

 \begin{theorem}\label{thm:QGDConvex}
    Consider the optimization problem \eqref{eqn:Problem} under Assumption \ref{assum:FiLipschitz} and \ref{assum:SparsityEmpiricalLoss}. Let $\Delta = {\min}(\Delta_{\rm ave},\Delta_{\max})$ and $\bar L =L\sqrt{m(1+\Delta)}$. Then, the iterates $\{x_k\}_{k\in \mathbb{N}}$ generated by \eqref{eqn:QuantizedGD} with $\gamma_k = (1/\bar L\alpha)$ satisfy 
    \begin{align*}
        \mathbf{E}\left( f(x_T) - f^\star \right) \leq \frac{\alpha \bar L}{2(T+1)}\| x_0 - x^\star \|^2.
    \end{align*}
    \end{theorem}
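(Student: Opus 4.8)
The plan is to run the classical "distance decrease" argument for smooth convex functions, but to absorb the quantization variance using a sharpened first-order inequality that makes the prescribed step-size exactly critical. First I would record the two structural facts I need: by Assumption~\ref{assum:FiLipschitz} the aggregate $f$ is convex, and by Lemma~\ref{lemma:LipschitzWholeFcn} its gradient is $\bar L$-Lipschitz with $\bar L = L\sqrt{m(1+\Delta)}$. Writing $g_k = Q(\nabla f(x_k))$ and using the unbiasedness $\mathbf{E}\{g_k \mid x_k\} = \nabla f(x_k)$ together with the variance bound $\mathbf{E}\{\|g_k\|^2 \mid x_k\}\le\alpha\|\nabla f(x_k)\|^2$ from Definition~\ref{def:UnbiasedRandQuant}, I expand the one-step distance $\|x_{k+1}-x^\star\|^2 = \|x_k - \gamma_k g_k - x^\star\|^2$ and take the conditional expectation given $x_k$ to obtain
\begin{equation*}
\mathbf{E}\{\|x_{k+1}-x^\star\|^2 \mid x_k\} \le \|x_k-x^\star\|^2 - 2\gamma_k\langle\nabla f(x_k),x_k-x^\star\rangle + \gamma_k^2\alpha\|\nabla f(x_k)\|^2 .
\end{equation*}

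The crux is to lower-bound the inner-product term with the strengthened inequality valid for every convex $\bar L$-smooth function, namely $\langle\nabla f(x_k),x_k-x^\star\rangle \ge f(x_k)-f^\star + \tfrac{1}{2\bar L}\|\nabla f(x_k)\|^2$, which follows by applying $f(x^\star)\ge f(x_k)+\langle\nabla f(x_k),x^\star-x_k\rangle+\tfrac{1}{2\bar L}\|\nabla f(x_k)\|^2$ and $\nabla f(x^\star)=0$. Substituting this bound collects the two gradient-norm terms into a single coefficient $\gamma_k\big(\tfrac{1}{\bar L}-\gamma_k\alpha\big)\|\nabla f(x_k)\|^2$, which vanishes precisely when $\gamma_k = 1/(\alpha\bar L)$. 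This is the heart of the argument: at the prescribed step-size the quantization noise is cancelled exactly, leaving
\begin{equation*}
\mathbf{E}\{\|x_{k+1}-x^\star\|^2 \mid x_k\} \le \|x_k-x^\star\|^2 - 2\gamma_k(f(x_k)-f^\star).
\end{equation*}

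Taking total expectations, summing from $k=0$ to $T$ telescopes the distance terms and gives $2\gamma\sum_{k=0}^T\big(\mathbf{E}f(x_k)-f^\star\big)\le\|x_0-x^\star\|^2$ with $\gamma = 1/(\alpha\bar L)$. To convert this running sum into a bound on the terminal iterate I would separately invoke a descent lemma: taking the conditional expectation of the $\bar L$-smoothness upper bound at $x_{k+1}$ and reusing the variance bound yields $\mathbf{E}\{f(x_{k+1})\mid x_k\}\le f(x_k)-\tfrac{\gamma_k}{2}\|\nabla f(x_k)\|^2\le f(x_k)$, so $\mathbf{E}f(x_k)$ is nonincreasing in $k$. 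Hence $(T+1)\big(\mathbf{E}f(x_T)-f^\star\big)\le\sum_{k=0}^T\big(\mathbf{E}f(x_k)-f^\star\big)$, and dividing by $2\gamma(T+1)$ produces the claimed $\mathbf{E}\big(f(x_T)-f^\star\big)\le\tfrac{\alpha\bar L}{2(T+1)}\|x_0-x^\star\|^2$.

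The main obstacle, and the reason a naive argument fails, is that at the borderline step-size $\gamma_k=1/(\alpha\bar L)$ the usual bounds are too blunt: combining plain convexity $\langle\nabla f(x_k),x_k-x^\star\rangle\ge f(x_k)-f^\star$ with co-coercivity $\|\nabla f(x_k)\|^2\le 2\bar L(f(x_k)-f^\star)$ makes the function-value coefficient collapse to zero, while merely splitting the inner product between convexity and co-coercivity loses a factor of two. The sharpened inequality above is exactly what forces the gradient-norm term to cancel and recovers the constant $\alpha\bar L/2$. The only delicate points are then verifying that this cancellation is exact at the stated step-size and justifying the interchange of expectation together with the monotonicity of $\mathbf{E}f(x_k)$.
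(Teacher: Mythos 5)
Your proposal is correct and follows essentially the same route as the paper's own proof: the same one-step distance recursion with the URQ unbiasedness and variance bound, the same sharpened inequality $\langle\nabla f(x_k),x_k-x^\star\rangle \ge f(x_k)-f^\star+\tfrac{1}{2\bar L}\|\nabla f(x_k)\|^2$ to cancel the gradient-norm term at $\gamma_k=1/(\alpha\bar L)$, the same telescoping sum, and the same smoothness-based descent lemma to establish monotonicity of $\mathbf{E}f(x_k)$ and pass from the running sum to the terminal iterate. The only difference is expository (you work with conditional expectations and the tower property explicitly, and you articulate why blunter bounds fail), not mathematical.
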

\begin{proof}
See Appendix \ref{app:thm:QGDConvex}.
\end{proof}

  \begin{corollary} \label{corr:QGDConvex}
     Consider the optimization problem \eqref{eqn:Problem} under Assumption \ref{assum:FiLipschitz} and \ref{assum:SparsityEmpiricalLoss}.  Let  $\bar L =L\sqrt{m(1+\Delta)}$ and $\Delta = {\min}(\Delta_{\rm ave},\Delta_{\max})$. Given $\varepsilon_0 = \| x_0 - x^\star\|^2$, by running \eqref{eqn:QuantizedGD} with $\gamma_k = (1/\bar L\alpha)$ for at most 
     \begin{align*}
         T^\star &= \frac{\alpha\bar L}{2}\cdot \frac{\varepsilon_0}{\varepsilon} \\ \intertext{iterations, under which}
         B^\star &= \left( {\rm log}_2d + B \right)c \cdot \frac{\alpha\bar L}{2}\cdot \frac{\varepsilon_0}{\varepsilon} 
     \end{align*}
     bits are sent, we ensure  $ \mathbf{E}\left( f(x_T) - f^\star \right) \leq \varepsilon$.  Here $B$ is the number of bits required to encode a single vector entry and $\mathbb{E}\{ \Vert Q(v)\Vert_0\} \leq c$. 
    \end{corollary}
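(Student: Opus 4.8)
The plan is to derive both complexity bounds directly from the rate guarantee of Theorem~\ref{thm:QGDConvex}, which states that the iterates of \eqref{eqn:QuantizedGD} with $\gamma_k = 1/(\bar L \alpha)$ satisfy $\mathbf{E}(f(x_T) - f^\star) \leq \tfrac{\alpha \bar L}{2(T+1)}\varepsilon_0$ with $\varepsilon_0 = \|x_0 - x^\star\|^2$. The iteration-complexity claim is then simply a matter of asking how large $T$ must be so that the right-hand side drops below the target accuracy $\varepsilon$.

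First I would impose $\tfrac{\alpha \bar L}{2(T+1)}\varepsilon_0 \leq \varepsilon$ and rearrange it into the equivalent condition $T + 1 \geq \tfrac{\alpha \bar L \varepsilon_0}{2\varepsilon}$. Since $T^\star = \tfrac{\alpha \bar L}{2}\cdot\tfrac{\varepsilon_0}{\varepsilon}$ obeys $T^\star + 1 > T^\star = \tfrac{\alpha \bar L \varepsilon_0}{2\varepsilon}$, any $T \geq T^\star$ (and in fact $T = T^\star$ itself) satisfies the threshold, so running \eqref{eqn:QuantizedGD} for at most $T^\star$ iterations guarantees $\mathbf{E}(f(x_T) - f^\star) \leq \varepsilon$. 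This establishes the first bound.

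For the communication bound I would account for the cost of one round of the protocol. In each iteration the worker transmits the quantized gradient $Q(\nabla f(x_k))$; following the naive encoding described before Corollary~\ref{corr:SCSerialQGD}, every nonzero entry costs $\log_2 d$ bits to locate its index and $B$ bits to store its value, so a single iteration sends $(\log_2 d + B)\,\|Q(\nabla f(x_k))\|_0$ bits. Taking expectations and using $\mathbf{E}\{\|Q(v)\|_0\} \leq c$ bounds the expected per-iteration traffic by $(\log_2 d + B)c$. Summing this over the $T^\star$ iterations from the first part yields $B^\star = (\log_2 d + B)\,c\cdot \tfrac{\alpha \bar L}{2}\cdot\tfrac{\varepsilon_0}{\varepsilon}$, as claimed.

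There is no genuine obstacle here: the result is an elementary corollary of Theorem~\ref{thm:QGDConvex} combined with a per-iteration bit count. The only points requiring mild care are the handling of the $T+1$ versus $T$ discrepancy (absorbed harmlessly into the stated $T^\star$, which slightly overshoots the exact threshold) and the observation that the bit count is an expectation over the randomness of the quantizer, so $B^\star$ should be read as a bound on expected communication rather than a deterministic guarantee.
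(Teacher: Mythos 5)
Your proposal is correct and follows essentially the same route as the paper's own proof: invert the rate bound of Theorem~\ref{thm:QGDConvex} to get $T^\star$, then multiply the per-iteration cost $(\log_2 d + B)c$ by $T^\star$ to get $B^\star$. Your additional remarks (the harmless $T+1$ versus $T^\star$ slack, and that the bit bound holds in expectation over the quantizer's randomness) are careful refinements of points the paper's proof glosses over, but they do not change the argument.
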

    \begin{proof}
See Appendix \ref{app:corr:QGDConvex}.
\end{proof}
    
    We conclude this section by studying the following 
    compressed IAG algorithm: given an initial point $x_0$ and a fixed, positive step size $\gamma$
    \begin{equation}
    x_{k+1} = x_k - \gamma Q\left( \sum_{i=1}^m \nabla f_i( x_{k-\tau_k^i} ) \right).  
    \label{eqn:QIAGnonDistr}
    \end{equation}
    The iteration accounts for heterogeneous worker delays, but performs a centralized compression of the sum of staled gradients. We include the result here to highlight how the introduction of heterogeneous delays affect our convergence guarantees, and consider it as an intermediate step towards the more practical architectures studied in the next section. 
    Note that if we let $\tau_k^i = 0$ (and therefore $\tau=0$), then the compressed IAG iteration \eqref{eqn:QIAGnonDistr} reduces to the compressed GD iteration \eqref{eqn:QuantizedGD}. 

     \begin{theorem}
    \label{thm:QthenIAGSCTrickHamid}
    Consider the optimization problem \eqref{eqn:Problem} under Assumption \ref{assum:FiLipschitz}, \ref{assum:SparsityEmpiricalLoss} and \ref{assum:mustronglyconvex}. Let  $\bar L =L\sqrt{m(1+\Delta)}$ and $\Delta = {\min}(\Delta_{\rm ave},\Delta_{\max})$, and suppose that $0< \gamma < \bar\gamma$ where
    \begin{align*}
        \bar  \gamma = \min \left(\frac{\mu}{\sqrt{\alpha}\tau \bar{L}^2},\frac{1}{\alpha \bar L}\right) 
     \end{align*}
     and $\tau_k^i \leq \tau$ for all $i,k$. Then, the iterates $\{x_k\}_{k\in\mathbb{N}}$ generated by \eqref{eqn:QIAGnonDistr} satisfy 
     \begin{align*}
      \mathbf{E} [f(x_k) -f(x^\star)] & \leq \left( p+q  \right)^{k/(1+2\tau)} \bigl(f(x_0) - f(x^\star)\bigr)  
     \end{align*}
     where $p = 1- \mu\gamma$ and $q= \bar L^4 \gamma^3 \tau^2 \alpha/\mu$. 
    \end{theorem}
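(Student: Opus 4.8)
The plan is to combine a one-step descent inequality for the compressed update with a delay-and-quantization bound on the staleness error, and then to close the argument with a lemma on delayed contractive recursions. Write $g_k = \sum_{i=1}^m \nabla f_i(x_{k-\tau_k^i})$ so the update reads $x_{k+1} = x_k - \gamma Q(g_k)$, and let $V_k = \mathbf{E}[f(x_k) - f(x^\star)]$. Starting from the $\bar L$-smoothness of $f$ (Lemma~\ref{lemma:LipschitzWholeFcn}),
\[
f(x_{k+1}) \leq f(x_k) - \gamma\langle \nabla f(x_k), Q(g_k)\rangle + \frac{\bar L\gamma^2}{2}\|Q(g_k)\|^2,
\]
I would take the expectation conditional on the history up to step $k$ and invoke the two defining properties of a URQ, $\mathbf{E}\{Q(g_k)\} = g_k$ and $\mathbf{E}\{\|Q(g_k)\|^2\} \leq \alpha\|g_k\|^2$. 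Applying the polarization identity $\langle a,b\rangle = \frac{1}{2}(\|a\|^2 + \|b\|^2 - \|a-b\|^2)$ to $\langle \nabla f(x_k), g_k\rangle$ produces a term $\frac{\gamma}{2}(\alpha\bar L\gamma - 1)\|g_k\|^2$, which is nonpositive precisely when $\gamma \leq 1/(\alpha\bar L)$ --- the second branch of $\bar\gamma$ --- so it can be discarded.

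After discarding that term I am left (in conditional expectation) with
\[
\mathbf{E}\{f(x_{k+1})\} \leq f(x_k) - \frac{\gamma}{2}\|\nabla f(x_k)\|^2 + \frac{\gamma}{2}\|\nabla f(x_k) - g_k\|^2.
\]
The Polyak--Lojasiewicz inequality implied by $\mu$-strong convexity, $\|\nabla f(x_k)\|^2 \geq 2\mu(f(x_k) - f(x^\star))$, turns the first two terms into the contraction factor $p = 1 - \mu\gamma$. It then remains to control the staleness error $\|\nabla f(x_k) - g_k\|^2 = \bigl\|\sum_i [\nabla f_i(x_k) - \nabla f_i(x_{k-\tau_k^i})]\bigr\|^2$. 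I would expand this with Cauchy--Schwarz, use per-component $L$-Lipschitzness to pass to $\sum_i \|x_k - x_{k-\tau_k^i}\|^2$, and rewrite each increment as $x_k - x_{k-\tau_k^i} = -\gamma\sum_{j=k-\tau_k^i}^{k-1} Q(g_j)$. Using $\tau_k^i \leq \tau$ and Cauchy--Schwarz once more bounds the whole thing by $\gamma^2\tau\sum_{j=k-\tau}^{k-1}\|Q(g_j)\|^2$; taking full expectations, applying the variance bound through the tower property, and then a second Lipschitz/strong-convexity estimate of the form $\mathbf{E}\|g_j\|^2 \leq C\max_{s\in[j-\tau,j]}V_s$ converts everything into the values $V_s$.

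This staleness estimate is the step I expect to be the main obstacle: two nested delay expansions each contribute a factor $\tau$ (explaining the $\tau^2$ in $q$) and two successive Lipschitz passes account for the high power of the Lipschitz constant, while care is needed so that the quantizer variance enters only linearly. Assembling these estimates yields a delayed recursion
\[
V_{k+1} \leq p\,V_k + q \max_{s\in[k-2\tau,\,k]} V_s, \qquad p = 1-\mu\gamma, \quad q = \frac{\bar L^4\gamma^3\tau^2\alpha}{\mu}.
\]
Since $V_k$ itself lies in the window, the right-hand side is dominated by $(p+q)\max_{s\in[k-2\tau,k]}V_s$, and the first branch $\gamma < \mu/(\sqrt{\alpha}\,\tau\bar L^2)$ of the step-size bound is exactly the condition that guarantees $p+q<1$. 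I would finish with a standard delayed-recursion lemma: when a nonnegative sequence satisfies $V_{k+1}\leq \sigma\max_{s\in[k-2\tau,k]}V_s$ with $\sigma=p+q<1$, the maximum over a sliding window of length $1+2\tau$ contracts by $\sigma$ every $1+2\tau$ iterations, yielding $V_k \leq \sigma^{k/(1+2\tau)}V_0$, which is the claimed bound.
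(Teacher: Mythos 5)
Your proposal matches the paper's proof essentially step for step: the same smoothness-plus-URQ descent inequality, the same polarization trick tied to the branch $\gamma \le 1/(\alpha\bar L)$ of the step-size bound, the same PL step giving $p=1-\mu\gamma$, the same two-stage staleness estimate (delay expansion with Cauchy--Schwarz, then the variance bound, then $\|g_j\|^2 \le (2\bar L^2/\mu)\max_s V_s$ from strong convexity) producing $q=\bar L^4\gamma^3\tau^2\alpha/\mu$, and the same delayed recursion $V_{k+1}\le p\,V_k + q\max_{s\in[k-2\tau,k]}V_s$, which the paper closes by citing Lemma 1 of Aytekin et al.\ and you close by re-deriving that lemma with the sliding-window contraction argument. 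The one point to be careful about is that both Lipschitz passes must use the sparsity-aware Cauchy--Schwarz expansion with the support-overlap indicators from the proof of Lemma~\ref{lemma:LipschitzWholeFcn} (as the paper does), since a naive per-component bound would give the factor $(mL)^2$ instead of $\bar L^2$ and hence a weaker constant than the theorem claims.
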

    \begin{proof}
    See Appendix \ref{app:thm:QthenIAGSCTrickHamid}.
    \end{proof}

 The admissible step-sizes in Theorem~\ref{thm:QthenIAGSCTrickHamid} depends on both the delay bound $\tau$ and $\alpha$.  The upper bound on the step-size in Theorem \ref{thm:QthenIAGSCTrickHamid} is smaller than the corresponding result in Theorem~\ref{thm:SCSerialQGD}.  If the quantizer produces the exact output, then the proposed algorithm coincides with the IAG algorithm~\eqref{eqn:IAG} for strongly convex optimization. Suppose that $\alpha =1, \mu/\bar L \leq \tau$, and $\gamma = 0.5\bar\gamma$. Then, the IAG iteration satisfies
\begin{align*}
f(x_k) - f(x^\star) \leq \left(1-\frac{1}{8}\frac{1}{1+2\tau}\frac{\mu^2}{\tau \bar L^2} \right)^{k}\bigl( f(x_0) - f(x^\star) \bigr)
\end{align*}
where the inequality follows from the fact that $(1-x)^a \leq 1-ax$ for $x,a\in[0,1].$ Thus, our step-size is more than three times larger than the one derived in~\cite{gurbuzbalaban2017convergence}, which results in corresponding improvements in convergence factors. 

Next, Theorem~\ref{thm:QthenIAGSCTrickHamid} estimates the associated $\varepsilon$-convergence times and expected information exchange from workers to master.
  \begin{corollary}\label{corr:QthenIAGSCTrickHamid}
    Consider the optimization problem \eqref{eqn:Problem} under Assumption \ref{assum:FiLipschitz}, \ref{assum:SparsityEmpiricalLoss} and \ref{assum:mustronglyconvex}.  Let  $\bar L =L\sqrt{m(1+\Delta)}$ and $\Delta = {\min}(\Delta_{\rm ave},\Delta_{\max})$, and suppose that
    \begin{align*}
         \gamma < \min \left(\frac{\mu}{\sqrt{\alpha}\tau \bar{L}^2},\frac{1}{\alpha \bar L}\right),
     \end{align*}
     where $\tau_k^i \leq \tau$ for all $i,k$.  Given $\varepsilon_0 = f(x_0)-f^\star$, by running \eqref{eqn:QIAGnonDistr} for at most
    \begin{align*}
        k^\star & = (1+2\tau)\frac{\mu}{\gamma \left( \mu^2 - \bar L^4 \gamma^2 \tau^2 \alpha  \right)} {\log}(\varepsilon_0/\varepsilon)\\ \intertext{iterations, under which}
        B^\star & = ({\log}_2 d +B )c \cdot  (1+2\tau)\frac{\mu}{\gamma \left( \mu^2 - \bar L^4 \gamma^2 \tau^2 \alpha  \right)} {\log}(\varepsilon_0/\varepsilon)
    \end{align*}
    bits are sent,  we ensure $\mathbf{E}\left( f(x_k) - f^\star \right)\leq\varepsilon$.  Here $B$ is the number of bits required to encode a single vector entry and $\mathbb{E}\{ \Vert Q(v)\Vert_0\} \leq c$. 
    \end{corollary}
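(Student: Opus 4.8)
The plan is to convert the geometric convergence rate of Theorem~\ref{thm:QthenIAGSCTrickHamid} into an explicit iteration count, and then multiply by the expected per-iteration communication cost. Writing $r := p+q$ for the base of the contraction, the theorem gives $\mathbf{E}[f(x_k)-f^\star] \leq r^{k/(1+2\tau)}\varepsilon_0$, so it suffices to find the smallest $k$ for which $r^{k/(1+2\tau)}\varepsilon_0 \leq \varepsilon$.

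First I would simplify $r$. Substituting $p = 1-\mu\gamma$ and $q = \bar L^4\gamma^3\tau^2\alpha/\mu$ and factoring out $\gamma/\mu$ gives $r = 1 - \theta$ with $\theta = \gamma(\mu^2 - \bar L^4\gamma^2\tau^2\alpha)/\mu$. The key point is that $\theta\in(0,1)$: the step-size hypothesis $\gamma < \mu/(\sqrt{\alpha}\tau\bar L^2)$ is exactly what guarantees $\bar L^4\gamma^2\tau^2\alpha < \mu^2$, so the factor $\mu^2 - \bar L^4\gamma^2\tau^2\alpha$ is strictly positive and $r$ is a genuine contraction factor in $(0,1)$.

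Next I would linearize using $1-\theta \leq e^{-\theta}$, so that $r^{k/(1+2\tau)} \leq \exp(-\theta k/(1+2\tau))$. Requiring this bound times $\varepsilon_0$ to be at most $\varepsilon$ and taking logarithms yields $k \geq \frac{1+2\tau}{\theta}\log(\varepsilon_0/\varepsilon)$; substituting the expression for $\theta$ recovers exactly $k^\star$. The communication bound then follows immediately: each transmitted vector $Q(\cdot)$ carries at most $c$ nonzeros in expectation, each encoded with $\log_2 d + B$ bits, so the expected cost per iteration is $(\log_2 d + B)c$, and multiplying by $k^\star$ gives $B^\star$.

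I expect no serious obstacle here, since the argument is a routine complexity estimate. The only place requiring care is verifying the sign of $\mu^2 - \bar L^4\gamma^2\tau^2\alpha$ from the step-size constraint and confirming that the admissible range indeed forces $r<1$; the second term $1/(\alpha\bar L)$ in the $\min$ plays no role in this particular calculation, but must be retained so that the bound of Theorem~\ref{thm:QthenIAGSCTrickHamid} applies in the first place.
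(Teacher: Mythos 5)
Your proposal is correct and follows essentially the same route as the paper's own proof: the paper also substitutes $p+q = 1 - \gamma(\mu^2-\bar L^4\gamma^2\tau^2\alpha)/\mu$ into the rate of Theorem~\ref{thm:QthenIAGSCTrickHamid} and invokes $-1/\log(1-x)\leq 1/x$, which is just your inequality $1-x\leq e^{-x}$ in logarithmic form, before multiplying $k^\star$ by the per-iteration cost $(\log_2 d + B)c$. Your explicit verification that the step-size constraint $\gamma<\mu/(\sqrt{\alpha}\tau\bar L^2)$ forces $\mu^2-\bar L^4\gamma^2\tau^2\alpha>0$ is a detail the paper leaves implicit, but it is the same argument.
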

\begin{proof}
See Appendix \ref{app:corr:QthenIAGSCTrickHamid}.
\end{proof}

    
    Furthermore, we extend the result for the optimization problem without the strong convexity assumption as follows:  
    
    \begin{theorem}\label{thm:CompressedIAGNonstronglyConvex}
     Consider the optimization problem \eqref{eqn:Problem} under Assumption \ref{assum:FiLipschitz} and  \ref{assum:SparsityEmpiricalLoss}. Let  $\bar L =L\sqrt{m(1+\Delta)}$ and $\Delta = {\min}(\Delta_{\rm ave},\Delta_{\max})$, and suppose that
\begin{align*}
    \gamma < \frac{1}{\sqrt{1+8\left( 1 + \beta(1+\theta) \right)\tau(\tau+1)}}\frac{2}{\bar L}, \end{align*} and \(\beta < 1/\left(2(1+1/\theta)\right)\) for $\theta>0$. Then, the iterates $\{x_k\}_{k\in\mathbb{N}}$ generated by \eqref{eqn:QIAGnonDistr} satisfy 
    \begin{align*}
      \mathop{\min}\limits_{k\in[0,K]} \mathbf{E}\| \nabla f(x_k) \|^2 \leq \frac{1}{a}\frac{1}{K+1}\left(f(x_0) - f^\star\right),
    \end{align*}
    where $a = \gamma/2-\gamma\beta(1+1/\theta). $
    \end{theorem}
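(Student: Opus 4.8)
The plan is to run a one-step descent argument based on the $\bar L$-smoothness of the total loss and then telescope over iterations. Write $g_k = \sum_{i=1}^m \nabla f_i(x_{k-\tau_k^i})$ for the staled aggregate gradient, so that \eqref{eqn:QIAGnonDistr} reads $x_{k+1} = x_k - \gamma Q(g_k)$, and let $e_k = \nabla f(x_k) - g_k$ denote the staleness error. First I would apply the descent lemma — Assumption~\ref{assum:FiLipschitz} with Lemma~\ref{lemma:LipschitzWholeFcn} gives that $f$ is $\bar L$-smooth with $\bar L = L\sqrt{m(1+\Delta)}$ — to the consecutive iterates $x_k,x_{k+1}$ and take expectation conditioned on the history. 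Unbiasedness (Property~2 of Definition~\ref{def:UnbiasedRandQuant}) turns the linear term into $-\gamma\langle\nabla f(x_k),g_k\rangle$, while the variance inequality $\mathbf{E}\|Q(g_k)-g_k\|^2 \le \beta\|g_k\|^2$ bounds the quadratic term by $\tfrac{\bar L\gamma^2}{2}(1+\beta)\|g_k\|^2$.

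Next I would separate the current gradient from the staleness error. Substituting $g_k = \nabla f(x_k) - e_k$ and applying Young's inequality with the free parameter $\theta$ to the cross and variance terms produces three kinds of contributions: a negative multiple of $\|\nabla f(x_k)\|^2$ whose coefficient, after the $\bar L\gamma$ factors are simplified, reduces to $a = \gamma(\tfrac12 - \beta(1+1/\theta))$ — this is where the factors $(1+1/\theta)$ originate, and why positivity of $a$ requires $\beta < 1/(2(1+1/\theta))$; a multiple of $\|g_k\|^2$; and the term $\tfrac{\gamma}{2}\|e_k\|^2$ carrying the $(1+\theta)$ factor.

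The heart of the argument is to control $\|e_k\|^2$. Since $x_k - x_{k-\tau_k^i} = -\gamma\sum_{j=k-\tau_k^i}^{k-1}Q(g_j)$, Cauchy--Schwarz over the window of length at most $\tau+1$ together with the Lipschitz continuity of $\nabla f$ yields a bound of the form $\|e_k\|^2 \le \bar L^2(\tau+1)\sum_{j=(k-\tau)_+}^{k-1}\|x_{j+1}-x_j\|^2$, with $\|x_{j+1}-x_j\|^2 = \gamma^2\|Q(g_j)\|^2$; this is the non-strongly-convex analogue of the delay lemma sketched (in the commented passage) for the strongly convex case. Taking total expectation and summing the one-step inequality over $k=0,\dots,K$, the left side telescopes to at most $f(x_0)-f^\star$, and in the staleness sum each $\mathbf{E}\|Q(g_j)\|^2 \le (1+\beta)\mathbf{E}\|g_j\|^2$ is counted at most $\tau+1$ times, which is precisely what creates the $\tau(\tau+1)$ and $(1+\beta(1+\theta))$ factors.

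Finally I would collect coefficients. The aggregate multiplier on $\sum_k\mathbf{E}\|g_k\|^2$ is made nonpositive exactly by the stated step-size bound $\gamma < \tfrac{2}{\bar L}\,(1+8(1+\beta(1+\theta))\tau(\tau+1))^{-1/2}$, so those terms are discarded, leaving $a\sum_{k=0}^K \mathbf{E}\|\nabla f(x_k)\|^2 \le f(x_0)-f^\star$. Dividing by $a(K+1)>0$ and bounding the minimum by the average gives the claim. I expect the main obstacle to be the bookkeeping in the two coupled sums: establishing the window bound on $\|e_k\|^2$ and then choosing the Young split so that the delay-induced $\|g_k\|^2$ terms are absorbed by the descent's negative $\|g_k\|^2$ terms under precisely the advertised step-size, while keeping the coefficient $a$ on $\|\nabla f(x_k)\|^2$ positive.
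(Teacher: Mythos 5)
Your proposal is correct and takes essentially the same route as the paper's own proof: the descent lemma for the $\bar L$-smooth total loss, a polarization/Young split producing the negative $\|\nabla f(x_k)\|^2$ and staled-gradient terms plus the staleness error $\|e_k\|^2$, a Young split with parameter $\theta$ of the quantization-variance term $\beta\|g_k\|^2$ (exactly where the condition $\beta < 1/(2(1+1/\theta))$ and the coefficient $a=\gamma/2-\gamma\beta(1+1/\theta)$ originate), the delay-window bound on $\|e_k\|^2$ in terms of $\sum_j\|Q(g_j)\|^2$, and the telescoping absorption argument under the stated step-size, finishing with $\min \le$ average. One small point of bookkeeping: the window $[k-\tau,k-1]$ contains $\tau$ terms rather than $\tau+1$, and keeping that tight Cauchy--Schwarz factor (as the paper does) is what makes the absorption condition reproduce the advertised $\tau(\tau+1)$ constant in the step-size bound.
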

    \begin{proof}
    See Appendix \ref{app:thm:CompressedIAGNonstronglyConvex}.
    \end{proof}
    
    Theorem \ref{thm:CompressedIAGNonstronglyConvex} implies the sufficient accuracy of the compression techniques to guarantee the numerical stability of the compressed IAG algorithms. Unlike Theorem \ref{thm:QthenIAGSCTrickHamid}, the step size from this theorem is independent of the conditional number $\bar L/\mu$.

    \section{Distributed Quantized Gradient Method}\label{sec:DQGD}

   Before we present the convergence results for the compressed  incremental aggregate  gradient algorithm, we consider its synchronous counterpart where the master waits for all workers to return before it updates the decision vector. Thus, we study the following algorithm: given the initial point $x_0$, a positive step size $\gamma_k$ and the URQ $Q$, iterates $x_k$ are generated via
    \begin{equation}
        x_{k+1} = x_k - \gamma_k \sum_{i=1}^m Q(\nabla f_i(x_k)).
        \label{eqn:syndisQGD}
    \end{equation}
	Since URQs are random and modify the gradient vectors and their support, the sparsity patterns of the quantized gradients are time-varying and can be characterized by the quantities 
	\begin{equation}\label{eqn:QuantizedSparsityMeas}
    \begin{array}{rl}
      \Delta _{\max }^k & = {\mathop{\max}\limits_{i \in [1,m]}}\left\{ {\sum\limits_{j = 1,j \ne i}^m {\mathbf{1}} \left\{ {{\text{supp}}(Q({a_i})) \cap {\text{supp}}(Q({a_j})) \ne \emptyset } \right\}} \right\} \\ 
      \Delta_{\rm ave}^k &= \frac{1}{m}\sum\limits_{i = 1}^m {\left\{ {\sum\limits_{j = 1,j \ne i}^m {\mathbf{1}} \{ {\text{supp}}(Q({a_i})) \cap {\text{supp}}(Q({a_j})) \ne \emptyset \} } \right\}} . 
    \end{array}
    \end{equation}
	A limitation with these quantities is that they cannot be computed off-line. 
    However, 
    since gradient compression reduces the support of vectors, ${\text{supp}}(Q({a_i}))\subset {\rm supp}(a_i)$, it always holds that  $\Delta_{\max}^k\leq \Delta_{\max}$ and $\Delta_{\rm ave}^k \leq \Delta_{\rm ave}$.

    The next lemma enables us to benefit from sparsity in our analysis. 
    \begin{lemma}
    \label{lemma:datasparsityTrick}
    Under Assumption \ref{assum:SparsityEmpiricalLoss}, for $k\geq 0$ 
    \begin{align*}
    \left\|  \sum_{i=1}^m Q(\nabla f_i(x_k)) \right\|^2  & \leq \sigma_k\sum_{i=1}^m \left\| Q(\nabla f_i(x_k)) \right\|^2,
    \end{align*}
    where
    \begin{align*}
    \sigma_k &= {\min}\left( \sqrt{m(1+\Delta_{\rm ave}^k)},1+\Delta_{\max}^k \right).
    \end{align*}
    Moreover, 
    \begin{align*}
    \sigma_k \leq \sigma = {\min}\left( \sqrt{m(1+\Delta_{\rm ave})},1+\Delta_{\max} \right).
    \end{align*}  
    \end{lemma}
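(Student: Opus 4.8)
The plan is to expand the squared norm of the sum into a double sum of inner products, discard the cross terms whose supports do not overlap, and then control the surviving ``conflict'' terms through a quadratic form attached to the conflict graph of the quantized gradients. Write $g_i = Q(\nabla f_i(x_k))$ and $n_i = \|g_i\|$. By property 1 of Definition \ref{def:UnbiasedRandQuant} together with Assumption \ref{assum:SparsityEmpiricalLoss}, we have $\mathrm{supp}(g_i) \subseteq \mathrm{supp}(\nabla f_i(x_k)) = \mathrm{supp}(a_i)$, so $\langle g_i, g_j \rangle = 0$ whenever the quantized supports of $g_i$ and $g_j$ are disjoint. I would introduce the symmetric $0/1$ conflict matrix $A \in \mathbb{R}^{m \times m}$ with $A_{ii} = 1$ and, for $i \neq j$, $A_{ij} = \mathbf{1}\{\mathrm{supp}(g_i) \cap \mathrm{supp}(g_j) \neq \emptyset\}$, whose off-diagonal row sums are exactly the conflict-graph degrees appearing in \eqref{eqn:QuantizedSparsityMeas}.

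First I would show, using Cauchy--Schwarz on each inner product, that
\[
\Big\| \sum_{i=1}^m g_i \Big\|^2 = \sum_{i,j} \langle g_i, g_j \rangle \leq \sum_{i,j} A_{ij}\, n_i n_j = n^\top A n,
\]
where $n = (n_1, \dots, n_m)^\top$: every surviving term obeys $\langle g_i, g_j \rangle \leq n_i n_j$, while every non-conflicting term vanishes. It then suffices to bound the quadratic form $n^\top A n$ by $\sigma_k \sum_i n_i^2$ in two complementary ways and take the smaller.

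For the first bound I would use $n_i n_j \leq (n_i^2 + n_j^2)/2$ and the symmetry of $A$ to obtain $n^\top A n \leq \sum_i n_i^2 \sum_j A_{ij} \leq (1 + \Delta_{\max}^k) \sum_i n_i^2$, since each row sum of $A$ equals $1$ plus the degree of node $i$, at most $1 + \Delta_{\max}^k$. For the second bound I would pass to matrix form, $n^\top A n \leq \|n\|\,\|A n\| \leq \|A\|_2 \|n\|^2 \leq \|A\|_F \|n\|^2$, and compute the Frobenius norm directly: because the entries are $0/1$, $\|A\|_F^2 = \sum_{i,j} A_{ij} = m + \sum_i (\text{degree of } i) = m(1 + \Delta_{\rm ave}^k)$, giving $n^\top A n \leq \sqrt{m(1+\Delta_{\rm ave}^k)}\,\|n\|^2$. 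Combining the two estimates yields the stated inequality with $\sigma_k = \min(\sqrt{m(1+\Delta_{\rm ave}^k)}, 1+\Delta_{\max}^k)$. The ``moreover'' claim then follows immediately from the monotonicity already noted before the lemma, $\Delta_{\max}^k \leq \Delta_{\max}$ and $\Delta_{\rm ave}^k \leq \Delta_{\rm ave}$, whence $\sigma_k \leq \sigma$.

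The main obstacle is the second, average-degree bound: unlike the max-degree estimate it is not a termwise inequality but a genuinely global one, and the cleanest route is to recognize the right-hand side as a quadratic form and route it through $\|A\|_2 \leq \|A\|_F$, so that $\Delta_{\rm ave}^k$ enters via the Frobenius norm rather than the top eigenvalue. One must also be careful that the conflict structure is taken with respect to the (random, $k$-dependent) quantized supports $\mathrm{supp}(g_i)$ rather than the raw data supports $\mathrm{supp}(a_i)$; this is precisely what makes $\sigma_k$ sharper than $\sigma$, and it is exactly where the support-shrinking property $\mathrm{supp}(Q(v)) \subseteq \mathrm{supp}(v)$ is used. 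Note that no sign-preserving property is needed, since Cauchy--Schwarz already bounds the cross terms irrespective of sign.
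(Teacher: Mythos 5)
Your proof is correct and follows essentially the same route as the paper's: the same expansion of the squared norm with cross terms killed by support disjointness, the same AM--GM argument for the $1+\Delta_{\max}^k$ bound, and your Frobenius-norm estimate $n^\top A n \leq \|A\|_F\|n\|^2$ with $\|A\|_F^2 = m(1+\Delta_{\rm ave}^k)$ is exactly the paper's double Cauchy--Schwarz repackaged in matrix language. The handling of the ``moreover'' part via $\Delta_{\max}^k \leq \Delta_{\max}$ and $\Delta_{\rm ave}^k \leq \Delta_{\rm ave}$ also matches the paper.
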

    \begin{proof}
    See Appendix \ref{app:lemma:datasparsityTrick}.   
    \end{proof}
    Notice that Lemma  \ref{lemma:datasparsityTrick} quantifies the combined impact of data sparsity and compression. We have $\sigma_k =1$ if the quantized gradients are completely sparse (their support sets do not overlap), whereas $\sigma_k = m$ if the quantized gradients are completely dense (all support sets overlap). 
    
    We are now ready to state our convergence result for strongly convex loss functions.

       \begin{theorem}
	\label{thm:DistributedQGDSC}
	Consider the optimization problem \eqref{eqn:Problem} under Assumption \ref{assum:FiLipschitz},  \ref{assum:mustronglyconvex} and \ref{assum:SparsityEmpiricalLoss}.  Suppose that $\gamma = 1/\left(L\alpha (1+\theta) \sigma \right)$ for some $\theta>0$. Then, the iterates $\{x_k\}_{k\in \mathbb{N}}$ generated by \eqref{eqn:QuantizedGD} satisfy 
	\begin{align*}
	\mathbf{E}\|x_k-x^\star \|^2 \leq (1-\mu\gamma)^k \| x_0 - x^\star\|^2 +\frac{1}{\mu \theta L}  \sum_{i=1}^m \|\nabla f_i(x^\star) \|^2,
	\end{align*}
\end{theorem}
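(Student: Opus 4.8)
The plan is to run the standard one-step contraction argument for quantized gradient descent, exploiting the unbiasedness of the URQ together with the sparsity bound of Lemma~\ref{lemma:datasparsityTrick}. Write $g_k = \sum_{i=1}^m Q(\nabla f_i(x_k))$ for the random update direction, so that the recursion reads $x_{k+1} = x_k - \gamma g_k$. First I would expand
\begin{align*}
\|x_{k+1} - x^\star\|^2 = \|x_k - x^\star\|^2 - 2\gamma\langle g_k, x_k - x^\star\rangle + \gamma^2\|g_k\|^2
\end{align*}
and take conditional expectation given $x_k$. Property~2 of Definition~\ref{def:UnbiasedRandQuant} gives $\mathbf{E}\{g_k\mid x_k\} = \sum_{i=1}^m \nabla f_i(x_k) = \nabla f(x_k)$, so the cross term collapses to the deterministic inner product $-2\gamma\langle\nabla f(x_k), x_k - x^\star\rangle$.

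For the second-moment term I would apply Lemma~\ref{lemma:datasparsityTrick} followed by Property~3 of the URQ: since $\sigma_k \le \sigma$ holds deterministically,
\begin{align*}
\mathbf{E}\{\|g_k\|^2\mid x_k\} \le \sigma\sum_{i=1}^m\mathbf{E}\{\|Q(\nabla f_i(x_k))\|^2\mid x_k\} \le \sigma\alpha\sum_{i=1}^m\|\nabla f_i(x_k)\|^2.
\end{align*}
To separate the distance-to-optimum contribution from an irreducible noise floor, I would split each summand via $\|\nabla f_i(x_k)\|^2 \le (1+\theta)\|\nabla f_i(x_k) - \nabla f_i(x^\star)\|^2 + (1+1/\theta)\|\nabla f_i(x^\star)\|^2$, the Peter--Paul inequality with the same $\theta$ that appears in the step-size.

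The heart of the argument is to dominate the resulting $\sum_i\|\nabla f_i(x_k) - \nabla f_i(x^\star)\|^2$ term by the cross term. I would split $-2\gamma\langle\nabla f(x_k), x_k - x^\star\rangle$ into two equal halves: one half is bounded using $\mu$-strong convexity of $f$ (Assumption~\ref{assum:mustronglyconvex}) as $-\gamma\langle\nabla f(x_k), x_k-x^\star\rangle \le -\gamma\mu\|x_k - x^\star\|^2$, which produces the geometric contraction factor; the other half is bounded by summing the co-coercivity inequality $\langle\nabla f_i(x_k)-\nabla f_i(x^\star), x_k-x^\star\rangle \ge \frac{1}{L}\|\nabla f_i(x_k)-\nabla f_i(x^\star)\|^2$ (valid since each $f_i$ is convex and $L$-smooth by Assumption~\ref{assum:FiLipschitz}, and using $\nabla f(x^\star)=0$), giving $-\gamma\langle\nabla f(x_k), x_k-x^\star\rangle \le -\frac{\gamma}{L}\sum_i\|\nabla f_i(x_k)-\nabla f_i(x^\star)\|^2$. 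Collecting terms, the coefficient multiplying $\sum_i\|\nabla f_i(x_k)-\nabla f_i(x^\star)\|^2$ equals $\gamma^2\sigma\alpha(1+\theta) - \gamma/L$, which the prescribed step-size $\gamma = 1/(L\alpha(1+\theta)\sigma)$ makes vanish exactly. This cancellation is the step I expect to be most delicate, since it is precisely what forces the particular form of $\gamma$; getting the two halves and the Young split to match up is where a sign or factor error would most easily creep in.

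What remains is bookkeeping. With the gradient-difference term eliminated, one obtains the scalar recursion $\mathbf{E}\|x_{k+1}-x^\star\|^2 \le (1-\gamma\mu)\mathbf{E}\|x_k-x^\star\|^2 + \gamma^2\sigma\alpha(1+1/\theta)\sum_i\|\nabla f_i(x^\star)\|^2$; substituting $\gamma$ and using $(1+1/\theta)/(1+\theta) = 1/\theta$ simplifies the noise coefficient to $\gamma/(\theta L)$. Unrolling this recursion and bounding the geometric series $\sum_{j=0}^{k-1}(1-\gamma\mu)^j \le 1/(\gamma\mu)$, which is valid because the step-size keeps $1-\gamma\mu \in (0,1)$, then yields the claimed bound, with the noise constant $\frac{\gamma}{\theta L}\cdot\frac{1}{\gamma\mu} = \frac{1}{\mu\theta L}$ matching the stated floor.
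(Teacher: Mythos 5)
Your proposal is correct and takes essentially the same approach as the paper: the same expansion of $\|x_{k+1}-x^\star\|^2$, unbiasedness for the cross term, Lemma~\ref{lemma:datasparsityTrick} plus the variance bound for the quadratic term, the Young split with the same $\theta$, per-component co-coercivity, strong convexity for the contraction factor, and the identical step-size-forced cancellation and geometric-series unrolling. The only cosmetic difference is directional: you apply co-coercivity to half of the cross term to produce $-\tfrac{\gamma}{L}\sum_i\|\nabla f_i(x_k)-\nabla f_i(x^\star)\|^2$ and cancel the Young term, whereas the paper applies co-coercivity inside the Young bound to convert those squared norms into inner products that merge with the cross term --- the same cancellation carried out in the opposite direction.
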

\begin{proof}
	See Appendix \ref{app:thm:DistributedQGDSC}.
\end{proof}

    Theorem \ref{thm:DistributedQGDSC}  states that the iterates genenrated by D-QGD \eqref{eqn:syndisQGD} converge to a ball around the optimal solution. It shows explicitly how the sparsity measure $\sigma$ and the quantizer accuracy parameter $\alpha$ affect the convergence guarantees. Note that a larger value of $\theta$ allows for larger step-sizes and better convergence factor, but also a larger residual error.

   For simplicity of notation and applicability of the results, we formulated Theorem~\ref{thm:DistributedQGDSC} in terms of $\sigma$ and not $\sigma_k$  (the proof, however,  also provides convergence guarantees in terms of $\sigma_k$). The result is conservative in the sense that compression increases sparsity of the gradients, which should translate into larger step-sizes. To evaluate the degree of conservatism, we carry out Monte Carlo simulations on the data sets described in Table~\ref{tab:data_experiment}. We indeed note that $\sigma_k$ is significantly smaller than $\sigma$. Next, we extend the result to convex optimizization problems. 
 
\begin{theorem}
    \label{thm:DistributedQGDConvex}
    Consider the optimization problem \eqref{eqn:Problem} under Assumption \ref{assum:FiLipschitz} and \ref{assum:SparsityEmpiricalLoss}. Let  $\sigma = \min\left( \sqrt{m(1+\Delta_{\rm ave})} , 1+ \Delta_{\max} \right)$ and $\theta > 0$. Suppose that $\gamma = 1/\left(L\alpha (1+\theta) \sigma \right)$. Then, the iterates $\{x_k\}_{k\in \mathbb{N}}$ generated by \eqref{eqn:QuantizedGD} satisfy 
    \begin{align*}
       \mathbf{E}(f( \bar{x}_T) - f(x^\star)) \leq \frac{1}{\gamma_{\min}}\frac{1}{T}\|x_0 - x^\star \|^2 + \frac{1}{\theta L}\sum_{i=1}^m\|\nabla f_i (x^\star)\|^2,
    \end{align*}
    where $\bar x_T = \frac{1}{T}\sum_{k=0}^{T-1}x_k.$
    \end{theorem}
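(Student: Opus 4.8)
The plan is to construct a one-step inequality for the Lyapunov function $\|x_k - x^\star\|^2$, telescope it over $k$, and close with Jensen's inequality. First I would expand
\begin{align*}
\left\|x_{k+1} - x^\star\right\|^2 = \left\|x_k - x^\star\right\|^2 - 2\gamma\Big\langle \sum_{i=1}^m Q(\nabla f_i(x_k)),\, x_k - x^\star\Big\rangle + \gamma^2\Big\|\sum_{i=1}^m Q(\nabla f_i(x_k))\Big\|^2,
\end{align*}
and take the conditional expectation given $x_k$. Unbiasedness (property 2 of Definition~\ref{def:UnbiasedRandQuant}) converts the cross term into $-2\gamma\langle \nabla f(x_k), x_k - x^\star\rangle$, while Lemma~\ref{lemma:datasparsityTrick} combined with the variance bound (property 3) controls the quadratic term as $\mathbf{E}\|\sum_i Q(\nabla f_i(x_k))\|^2 \leq \sigma\alpha\sum_{i=1}^m \|\nabla f_i(x_k)\|^2$.

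The crux is to bound $\sum_i \|\nabla f_i(x_k)\|^2$. Since only the aggregate gradient $\nabla f(x^\star)=\sum_i \nabla f_i(x^\star)$ vanishes, and \emph{not} the individual component gradients, I would split each summand by $\|a+b\|^2 \leq (1+\theta)\|a\|^2 + (1+1/\theta)\|b\|^2$ with $a = \nabla f_i(x_k) - \nabla f_i(x^\star)$ and $b = \nabla f_i(x^\star)$. The first group is handled by co-coercivity of the gradient, which follows from the convexity and $L$-smoothness of each $f_i$ in Assumption~\ref{assum:FiLipschitz}, giving $\sum_i\|\nabla f_i(x_k)-\nabla f_i(x^\star)\|^2 \leq L\langle\nabla f(x_k),\, x_k-x^\star\rangle$ after using $\nabla f(x^\star)=0$. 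The second group produces the irreducible residual $\sum_i \|\nabla f_i(x^\star)\|^2$.

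Substituting $\gamma = 1/(L\alpha(1+\theta)\sigma)$ is what makes the bookkeeping collapse cleanly: the coefficient $\gamma^2\sigma\alpha(1+\theta)L$ multiplying the co-coercivity bound simplifies exactly to $\gamma$, so the extra $+\gamma\langle\nabla f(x_k), x_k-x^\star\rangle$ cancels only half of the $-2\gamma\langle\nabla f(x_k), x_k-x^\star\rangle$ descent term, leaving $-\gamma\langle\nabla f(x_k), x_k-x^\star\rangle$; meanwhile the residual coefficient $\gamma^2\sigma\alpha(1+1/\theta)$ reduces to $\gamma/(\theta L)$ since $(1+1/\theta)/(1+\theta)=1/\theta$. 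Convexity then lower-bounds $\langle\nabla f(x_k), x_k-x^\star\rangle \geq f(x_k)-f(x^\star)$, yielding
\begin{align*}
\gamma\,\mathbf{E}\bigl(f(x_k)-f(x^\star)\bigr) \leq \mathbf{E}\left\|x_k-x^\star\right\|^2 - \mathbf{E}\left\|x_{k+1}-x^\star\right\|^2 + \frac{\gamma}{\theta L}\sum_{i=1}^m\|\nabla f_i(x^\star)\|^2.
\end{align*}
Summing over $k=0,\dots,T-1$ telescopes the distance terms; discarding the nonnegative $\mathbf{E}\|x_T-x^\star\|^2$, dividing by $\gamma T$, and invoking Jensen's inequality $f(\bar x_T)\leq \tfrac{1}{T}\sum_k f(x_k)$ gives the claim, with $\gamma_{\min}=\gamma$ for the constant step-size.

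The main obstacle is precisely the non-vanishing component gradients at the optimum: unlike the single-worker analyses earlier in the section where the variance attaches to $\nabla f(x_k)$ directly, here the quantization noise is tied to $\sum_i\|\nabla f_i(x_k)\|^2$, which cannot be driven to zero even at $x^\star$. The free parameter $\theta$ is the device separating the convergent part (absorbed through co-coercivity into the descent) from this residual, and calibrating the step size so that the $(1+\theta)$ and $(1+1/\theta)$ factors reconcile with the descent coefficient is the delicate bit. This mirrors the strongly convex Theorem~\ref{thm:DistributedQGDSC}, with the telescoping-plus-Jensen argument replacing the geometric contraction.
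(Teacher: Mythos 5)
Your proposal is correct and follows essentially the same route as the paper's proof: the same expansion of $\|x_{k+1}-x^\star\|^2$, unbiasedness for the cross term, Lemma~\ref{lemma:datasparsityTrick} plus the variance bound for the quadratic term, the Young/co-coercivity split that isolates the residual $\sum_{i=1}^m\|\nabla f_i(x^\star)\|^2$, the same step-size cancellation, and telescoping combined with Jensen's inequality (correctly identifying $\gamma_{\min}=\gamma$ for the constant step size). The only cosmetic difference is that the paper imports the one-step inequality verbatim from the proof of Theorem~\ref{thm:DistributedQGDSC} rather than rederiving it, as you do.
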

\begin{proof}
See Appendix  \ref{app:thm:DistributedQGDConvex}.
\end{proof}

    \begin{table}[t]
    \centering
        \begin{tabular}{|c|c|c|c|c|}
        \hline 
                         &   & \multicolumn{3}{|c|}{${\mathbf{E}\{\sigma_k\}}/{m}$} \\ \cline{3-5}
            Data Set &$\sigma/m$ &  GS &  TQ &  LP \\ 
            \hline \hline 
            RCV1-train & $0.83$ & $0.66$  & $0.07$ & $0.42$  \\ 
            real-sim & $0.8278$ & $0.58$ & $0.06$ & $0.37$\\ 
            GenDense & $1$ & $1$ & $0.7$&  $1$\\ \hline 
        \end{tabular}
        \caption{Empirical evaluations of $\sigma_k$ and $\sigma$ with gradient sparsifier (GS) with $p_i=0.5$, with ternary quantizer (TQ), and with low-precision quantizer (LP) with $s=4$. }
        \label{tab:sigma_k}
    \end{table}

    %
    %
    \section{Q-IAG Method} \label{sec:DIAG}
    In this section, we rather consider the quantized version of the optimization algorithm which is suited for communications with limited bandwidth. Therefore, we study the convergence rate of the quantized version of the IAG algorithm (Q-IAG) where the update is 
    \begin{equation}
        x_{k+1} = x_k - \gamma  \sum_{i=1}^m Q\left( \nabla f_i (x_{k-\tau_k^i}) \right), 
        \label{eqn:QIAG}
    \end{equation}
    where $\gamma$ is the constant step size, and $Q$ is the URQ.  Notice that \[\mathbf{E} \left\{ \sum_{i=1}^m Q\left( \nabla f_i (x_{k-\tau_k^i}) \right) \right\} =  \sum_{i=1}^m\nabla f_i( x_{k-\tau_k^i} ).\]

    By Assumption \ref{assum:SparsityEmpiricalLoss}, ${\rm supp}( Q(\nabla f_i(x_{k-\tau^i_k}))) = {\rm supp}(Q(a_i))$, and thus the sparsity measures defined \eqref{eqn:QuantizedSparsityMeas} will be used to strengthen our main analysis.

    Now, we present the result for strongly convex optimization.

    \begin{theorem}
    \label{thm:QIAGDistSC}
    Consider the optimization problem \eqref{eqn:Problem} under Assumption \ref{assum:FiLipschitz}, \ref{assum:SparsityEmpiricalLoss} and \ref{assum:mustronglyconvex}. Let  $\bar L =L\sqrt{m(1+\Delta)}$, $\Delta = {\min}(\Delta_{\rm ave},\Delta_{\max})$, and suppose that $0<\gamma<\bar\gamma$  where
    \begin{align*}
       \bar \gamma = \frac{2\mu}{1 + m\sigma\alpha L^2\left( 2\bar L^2\tau^2 + (1+\theta)  \right)}
    \end{align*}
    and  $\sigma = \min\left(  \sqrt{m(1+\Delta_{\rm ave})}, 1+\Delta_{\max}\right) $ and $\tau_k^i \leq \tau$ for all $i,k$, and $\theta>0$. Then, the iterates $\{x_k\}_{k\in \mathbb{N}}$ generated by \eqref{eqn:QIAG} satisfy 
    \begin{align*}
      \mathbf{E}\|x_k - x^\star \|^2 & \leq (p+q)^{k/(1+2\tau)} \|x_0 - x^\star \|^2 + e/(1-p-q), \\ \intertext{where}
    p & = 1 - 2\mu\gamma + \gamma^2   \\ 
        q& = 2m\sigma\alpha  L^2 \gamma^2 \bar L^2\tau^2 +  (1+\theta)\gamma^2 m\alpha\sigma L^2\\
        e & = \left(2m\alpha \gamma^2 \bar L^2\tau^2 + (1+1/\theta)\gamma^2 \sigma  \alpha\right) \sum_{i=1}^m   \left\|     \nabla f_i (x^\star) \right\|^2.
    \end{align*}
    \end{theorem}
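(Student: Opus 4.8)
The plan is to derive a one-step inequality for $r_k := \mathbf{E}\|x_k - x^\star\|^2$ that involves a maximum over a delay window, and then convert it into the stated geometric rate via a discrete delayed-recursion argument. Write $g_k = \sum_{i=1}^m Q(\nabla f_i(x_{k-\tau_k^i}))$, so that $x_{k+1}-x_k = -\gamma g_k$ and, by the unbiasedness property of Definition~\ref{def:UnbiasedRandQuant}, $\mathbf{E}[g_k\mid\mathcal{F}_k] = \sum_{i=1}^m\nabla f_i(x_{k-\tau_k^i})$, where $\mathcal{F}_k$ collects the history through iteration $k$ (independence of the fresh quantizer randomness from the iterates is what licenses this). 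First I would expand
\[
\|x_{k+1}-x^\star\|^2 = \|x_k-x^\star\|^2 - 2\gamma\langle x_k-x^\star, g_k\rangle + \gamma^2\|g_k\|^2
\]
and take conditional expectation.

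The cross term is the crux. I would split the conditionally expected inner product as $\langle x_k-x^\star,\nabla f(x_k)\rangle + \langle x_k-x^\star, b_k\rangle$ with $b_k = \sum_{i=1}^m(\nabla f_i(x_{k-\tau_k^i}) - \nabla f_i(x_k))$. Strong convexity (Assumption~\ref{assum:mustronglyconvex}) together with $\nabla f(x^\star)=0$ bounds the first piece below by $\mu\|x_k-x^\star\|^2$, giving the $-2\mu\gamma$ contribution; Young's inequality $-2\gamma\langle x_k-x^\star, b_k\rangle \le \gamma^2\|x_k-x^\star\|^2 + \|b_k\|^2$ applied to the second piece produces the $+\gamma^2$ term (so that the coefficient of $r_k$ becomes $p = 1-2\mu\gamma+\gamma^2$) and leaves $\|b_k\|^2$ to control.

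Next I would bound the two remaining quantities, $\gamma^2\|g_k\|^2$ and $\|b_k\|^2$, with the same toolbox. For $\gamma^2\|g_k\|^2$ I apply the sparsity bound of Lemma~\ref{lemma:datasparsityTrick} (valid with the same $\sigma$ since $\mathrm{supp}(Q(\nabla f_i(x_{k-\tau_k^i}))) = \mathrm{supp}(Q(a_i))$), then property~3 of Definition~\ref{def:UnbiasedRandQuant} to get $\mathbf{E}\|g_k\|^2 \le \sigma\alpha\sum_i\mathbf{E}\|\nabla f_i(x_{k-\tau_k^i})\|^2$, and finally the splitting $\|\nabla f_i(y)\|^2 \le (1+\theta)L^2\|y-x^\star\|^2 + (1+1/\theta)\|\nabla f_i(x^\star)\|^2$ (Assumption~\ref{assum:FiLipschitz} with Young parameter $\theta$), which is the source of the $(1+\theta)$ and $(1+1/\theta)$ factors. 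For $\|b_k\|^2$ I would telescope $x_k - x_{k-\tau_k^i} = \sum_{j=k-\tau_k^i}^{k-1}(x_{j+1}-x_j)$, use the per-component Lipschitz property and the aggregate constant $\bar L$ (Lemma~\ref{lemma:LipschitzWholeFcn}) with Cauchy--Schwarz to obtain a bound of the form $\bar L^2\tau\sum_{j=k-\tau}^{k-1}\|x_{j+1}-x_j\|^2 = \bar L^2\tau\gamma^2\sum_j\|g_j\|^2$, and then reapply the $\sigma$-$\alpha$ and gradient-splitting steps to each $\mathbf{E}\|g_j\|^2$. Collecting terms and bounding the stale iterate errors by a maximum over the window $[k-2\tau,k]$ (width $2\tau$ because the delay term reaches back through $x_{j-\tau_j^i}$ with $j\ge k-\tau$, hence $s\ge k-2\tau$) yields
\[
r_{k+1} \le p\,r_k + q\max_{k-2\tau\le s\le k}r_s + e
\]
with the stated $p,q,e$.

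Finally, since $p,q\ge 0$ I would rewrite this as $r_{k+1}\le (p+q)\max_{k-2\tau\le s\le k}r_s + e$ and invoke the standard lemma for linear recursions with maximum delay: if $r_{k+1}\le \eta\max_{k-T\le s\le k}r_s + e$ with $\eta<1$ and $T=2\tau$, then $r_k\le \eta^{k/(T+1)}r_0 + e/(1-\eta)$, using the convention $x_s=x_0$ for $s\le 0$. This delivers $(p+q)^{k/(1+2\tau)}\|x_0-x^\star\|^2 + e/(1-p-q)$, and the admissibility condition $0<\gamma<\bar\gamma$ is exactly $p+q<1$: solving $-2\mu\gamma + \gamma^2\bigl[1 + m\sigma\alpha L^2(2\bar L^2\tau^2 + (1+\theta))\bigr]<0$ returns $\bar\gamma = 2\mu/\bigl(1 + m\sigma\alpha L^2(2\bar L^2\tau^2+(1+\theta))\bigr)$. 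The main obstacle is the bookkeeping in the delay term $\|b_k\|^2$: keeping the correct Lipschitz constant ($\bar L$ for the aggregate and $L$ per component), tracking how $\sigma$ and $\alpha$ propagate through the repeated splitting, and correctly identifying the window width $2\tau+1$ that fixes the exponent in the rate.
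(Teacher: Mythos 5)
Your proposal is correct and follows essentially the same route as the paper's proof: the same expansion of $\|x_{k+1}-x^\star\|^2$, the same split of the cross term into a strong-convexity part and a delay-error part handled by Young's inequality (yielding $p=1-2\mu\gamma+\gamma^2$), the same treatment of the quadratic term via Lemma~\ref{lemma:datasparsityTrick}, URQ property~3 and the $(1+\theta)$/$(1+1/\theta)$ gradient splitting, the same telescoping-plus-Lipschitz bound on the stale-gradient error over the window $[k-2\tau,k]$ (the paper packages this as Lemma~\ref{lemma:trickerror_IAG}), and the same delayed-recursion lemma (Lemma~1 of the cited Aytekin et al.\ reference) with $p+q<1$ giving exactly the stated $\bar\gamma$. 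No gaps.
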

    \begin{proof}
    See Appendix \ref{app:thm:QIAGDistSC}.
    \end{proof}
    
    Unlike the result for the compressed IAG algorithm \eqref{eqn:QIAGnonDistr}, Theorem \eqref{thm:QIAGDistSC} can only guarantee that the Q-IAG algorithm \eqref{eqn:QIAG} converges to a ball around the optimum.  Letting $\theta= 1$ and $\gamma = 0.5\bar\gamma$ in Theorem~\ref{thm:QIAGDistSC} yields the convergence bound 
\begin{align*}
\mathbf{E}\|x_k - x^\star \|^2 & \leq \left( 1-\frac{\mu^2}{1 + 2m\sigma\alpha L^2\left( \bar L^2\tau^2 + 1  \right)}    \right)^{k/(1+2\tau)}\| x_0 - x^\star \|^2 + E, \intertext{where}
E &= 2\mu\frac{m\alpha \bar L^2\tau^2 + \sigma\alpha}{1+2m\sigma\alpha L^2 (\bar L^2 \tau^2 + 1)} \sum_{i=1}^m \| \nabla f_i(x^\star) \|^2.
\end{align*}
Thus, the convergence rate and step-size for~\eqref{eqn:QIAG} depend on the delay bound $\tau$ and the URQ parameter $\alpha$. In particular, the convergence factor is penalized roughly by $\mu^2/(\alpha \bar L^4 \tau^2)$ when individual workers compress their gradient information. In the absence of the worker asynchrony ($\tau = 0$), the upper bound on the step-size becomes $\mu/(m\sigma\alpha L^2)$, which is smaller than the step-size allowed by Theorem \ref{thm:DistributedQGDSC} with $\theta=1$.

Next, we present the result for optimization problems without the strong convexity assumption on the objective function $f$. However, in this case we need to assume that the component functions have uniformly bounded gradients:
    \begin{assumption}\label{assum:BoundedGradient}
    There exists a scalar $C$ such that 
    \begin{align*}
        \| \nabla f_i(x) \| \leq C,
    \end{align*} 
    for any component function $f_i:\mathbb{R}^d\rightarrow\mathbb{R}$ and $x\in\mathbb{R}^d$. 
    \end{assumption} 
    
 One popular problem which satisfies Assumption~\ref{assum:BoundedGradient} is the low-rank least-squares matrix completion problem which arises Euclidean distance estimation, clustering and other applications~\cite{de2015taming,candes2009exact}.  Now, the result is shown below: 
    \begin{theorem}\label{thm:DistIAGHamidNSC}
      Consider the optimization problem \eqref{eqn:Problem} under Assumption \ref{assum:FiLipschitz}, \ref{assum:SparsityEmpiricalLoss} and \ref{assum:BoundedGradient}. Let  $\bar L =L\sqrt{m(1+\Delta)}$ and $\Delta = {\min}(\Delta_{\rm ave},\Delta_{\max})$, and suppose that \begin{align*}
        \gamma < \frac{1}{1+\sqrt{1+8\tau(\tau+1)}}\frac{2}{\bar L}, 
    \end{align*}
    and $\tau_k^i \leq \tau$ for all $i,k$. Then, the iterates $\{x_k\}_{k\in\mathbb{N}}$ generated by \eqref{eqn:QIAG} satisfy 
    \begin{align*}
         \mathop{\min}\limits_{k\in[0,K]} \mathbf{E}\| \nabla f(x_k) \|^2 &\leq \frac{2}{\gamma}\frac{1}{K+1}\left( f(x_0) -f^\star \right) + e, 
    \end{align*}
    where $e = 2\beta\sigma m C^2$. 
    \end{theorem}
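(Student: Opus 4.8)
The plan is to mirror the Lyapunov-descent argument behind Theorem~\ref{thm:CompressedIAGNonstronglyConvex}, replacing the single centralized quantizer by the per-component compression in \eqref{eqn:QIAG} and exploiting Assumption~\ref{assum:BoundedGradient} to turn the compression error into an additive constant. Write $g_k = \sum_{i=1}^m \nabla f_i(x_{k-\tau_k^i})$ for the exact (stale) aggregate gradient and $G_k = \sum_{i=1}^m Q(\nabla f_i(x_{k-\tau_k^i}))$ for the actual update direction, so that $x_{k+1}-x_k = -\gamma G_k$. First I would invoke the $\bar L$-smoothness of $f$ (Assumption~\ref{assum:FiLipschitz} with Lemma~\ref{lemma:LipschitzWholeFcn}) and the polarization identity $2\langle u,v\rangle = \|u\|^2 + \|v\|^2 - \|u-v\|^2$ to obtain, pathwise,
\begin{align*}
f(x_{k+1}) \leq f(x_k) - \frac{\gamma}{2}\|\nabla f(x_k)\|^2 - \Bigl(\frac{\gamma}{2} - \frac{\gamma^2\bar L}{2}\Bigr)\|G_k\|^2 + \frac{\gamma}{2}\|\nabla f(x_k) - G_k\|^2 .
\end{align*}

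The core of the argument is to control the mismatch $\|\nabla f(x_k) - G_k\|^2$, which I would split via $\|a+b\|^2 \leq 2\|a\|^2 + 2\|b\|^2$ into a \emph{delay} part $\|\nabla f(x_k) - g_k\|^2$ and a \emph{compression} part $\|g_k - G_k\|^2$. For the compression part, each residual $\nabla f_i(x_{k-\tau_k^i}) - Q(\nabla f_i(x_{k-\tau_k^i}))$ has support contained in $\mathrm{supp}(a_i)$, so Lemma~\ref{lemma:datasparsityTrick} applies verbatim and gives $\|g_k - G_k\|^2 \leq \sigma\sum_{i}\|\nabla f_i(x_{k-\tau_k^i}) - Q(\nabla f_i(x_{k-\tau_k^i}))\|^2$; taking expectations, using the URQ property $\mathbf{E}\|Q(v)-v\|^2 \leq \beta\|v\|^2$ and the uniform bound $\|\nabla f_i\|\leq C$ from Assumption~\ref{assum:BoundedGradient}, yields $\mathbf{E}\|g_k - G_k\|^2 \leq \beta\sigma m C^2$. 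For the delay part I would follow the telescoping used in the proof of Theorem~\ref{thm:CompressedIAGNonstronglyConvex}: writing $x_k - x_{k-\tau_k^i} = -\gamma\sum_{j=k-\tau_k^i}^{k-1} G_j$ and combining $\bar L$-Lipschitz continuity with $\|\sum_j u_j\|^2 \leq \tau\sum_j\|u_j\|^2$ leads to $\|\nabla f(x_k) - g_k\|^2 \leq \bar L^2\gamma^2\tau\sum_{j=k-\tau}^{k-1}\|G_j\|^2$.

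Substituting the two bounds and taking expectations produces the delayed recursion
\begin{align*}
V_{k+1} \leq V_k - a\,\Theta_k - b\,w_k + c\sum_{j=k-\tau}^{k-1} w_j + d,
\end{align*}
with $V_k = \mathbf{E} f(x_k)$, $\Theta_k = \mathbf{E}\|\nabla f(x_k)\|^2$, $w_k = \mathbf{E}\|G_k\|^2$, $a = \gamma/2$, $b = (\gamma - \gamma^2\bar L)/2$, $c = \gamma^3\bar L^2\tau$, and $d = \gamma\beta\sigma m C^2$. I would then reuse the summation lemma behind Theorem~\ref{thm:CompressedIAGNonstronglyConvex}, which requires $b - c(\tau+1)\geq 0$ so that the $w_k$-terms telescope away, augmented to carry the constant $d$: summing from $k=0$ to $K$, telescoping the $V_k$, dropping $V_{K+1}\geq f^\star$, and dividing by $a(K+1)$ gives $\frac{1}{K+1}\sum_k \Theta_k \leq \frac{V_0 - f^\star}{a(K+1)} + \frac{d}{a}$. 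Since the minimum is at most the average and $1/a = 2/\gamma$, $d/a = 2\beta\sigma m C^2 = e$, this is exactly the claimed bound. Finally I would check the step-size condition: $b - c(\tau+1)\geq 0$ is, with $u = \gamma\bar L$, the quadratic inequality $2\tau(\tau+1)u^2 + u - 1 \leq 0$, whose positive root is $u = \tfrac{2}{1+\sqrt{1+8\tau(\tau+1)}}$, i.e. precisely $\gamma < \tfrac{2}{\bar L(1+\sqrt{1+8\tau(\tau+1)})}$ (which in turn forces $\gamma < 1/\bar L$, hence $b>0$).

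The step I expect to be the main obstacle is the delay bound $\|\nabla f(x_k) - g_k\|^2 \leq \bar L^2\gamma^2\tau\sum_{j=k-\tau}^{k-1}\|G_j\|^2$: because the staleness $\tau_k^i$ differs across workers, $g_k$ is not $\nabla f$ evaluated at a single point, so the telescoping and the use of the aggregate Lipschitz constant $\bar L$ (rather than the looser $mL$ or the sparsity-weighted $\sigma L$) must be carried out carefully to land exactly the coefficient $c = \gamma^3\bar L^2\tau$ that reproduces the stated step-size threshold. The second delicate point is making the convergence-trick lemma carry the additive constant $d$ without disturbing the cancellation of the $w_k$-terms.
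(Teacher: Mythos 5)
Your proposal matches the paper's own proof essentially step for step: the same smoothness-plus-polarization descent inequality, the same split of $\|\nabla f(x_k)-G_k\|^2$ into a delay term (the paper's Lemma~\ref{lemma:QIAGNSC1}) and a compression term handled by Lemma~\ref{lemma:datasparsityTrick}, the URQ property, and Assumption~\ref{assum:BoundedGradient} (Lemmas~\ref{lemma:QIAGNSC1.1}--\ref{lemma:QIAGNSC2}), followed by the delayed summation lemma with an additive error (Lemma~\ref{lemma:trickMAinQIAGNSCWithErr}) and the same quadratic step-size condition $b-c(\tau+1)\geq 0$, which you solve directly in $u=\gamma\bar L$ where the paper substitutes $\gamma = 1/(\bar L+\omega)$ --- an immaterial algebraic difference. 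The obstacle you flagged (heterogeneous delays preventing a single-point Lipschitz bound) is exactly what the paper resolves by the per-component argument of Lemma~\ref{lemma:errorQIAGnondist1}, yielding the coefficient $c=\bar L^2\tau\gamma^3$ you anticipated.
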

    
    \begin{proof}
    See Appendix \ref{app:thm:DistIAGHamidNSC}.

    \end{proof}
    
    Unlike Theorem \ref{thm:QIAGDistSC}, the step size stated in 
    Theorem \ref{thm:DistIAGHamidNSC} does not depend on the condition number $\bar L/\mu$.

    %
    %
    \section{Simulation Results}\label{sec:exp}

    {\color{black}
    
    
    We consider the empirical risk minimization problem \eqref{eqn:Problem} with component loss functions on the form of 
    \begin{align*}
        f_i(x) = \frac{1}{2\rho}\| A_i x - b_i \|^2 + \frac{\sigma}{2}\|x\|^2,
    \end{align*}
    where $A_i\in\mathbb{R}^{p\times d}$ and $b_i \in\mathbb{R}^p$. We distributed data samples $(a_1,b_1),\ldots,(a_n,b_n)$ among $m$ workers. Hence, $n = mp$. The experiments were done using both synthetic and real-world data sets  as shown in Table \ref{tab:data_experiment}.  Each data sample $a_i$ is then normalized by its own Euclidean norm. We evaluated the performance of the distributed gradient algorithms \eqref{eqn:QuantizedGD}-\eqref{eqn:QIAG} using the gradient sparsifier, the low-precision quantizer and the ternary quantizer in Julia.  We set $m=3$, $x_0 = \mathbf{0}$, set $\sigma = 1$, and set $\rho$ equal to the total number of data samples according to Table \ref{tab:data_experiment}. In addition, GenDense from Table \ref{tab:data_experiment} generated the dense data set such that each element of the data matrices $A_i$ is randomly drawn from a uniform random number between $0$ and $1$, and each element of the class label vectors $b_i$ is the sign of a zero-mean Gaussian random number with unit variance.   For the gradient sparsifier, we assumed that vector elements are represented by $64$ bits (IEEE doubles) while the low-precision quantizer only requires $1+\log_2(s)$ bits to encode each vector entry. For the distributed algorithms, we have used $\tau=m$.

    \begin{table}[ht]
        \centering
        \begin{tabular}{|c|c|c|c|}
        \hline 
        Data Set    & Type & Samples & Dimension \\
        \hline \hline 
        RCV1-train  & sparse   & $23149$  & $47236$ \\ 
        \hline 
        real-sim    & sparse &  $72309$   & $20958$ \\ 
        \hline 
        covtype     & dense & $581012$   & $54$ \\
        \hline 
        GenDense   & dense & $40000$    & $1000$ \\
        \hline 
        \end{tabular}
        \caption{Summary of synthetic and real-world data sets used in our experiments.}
        \label{tab:data_experiment}
    \end{table}

    Figure \ref{fig:CGIter} and \ref{fig:CGCoordinates}  show the trade-off between the convergence in terms of iteration count and the number of communicated bits. Naturally, the full gradient method has the fastest convergence, and the ternary quantizer is slowest. The situation is reversed if we judge the convergence relative to the number of communicated bits. In this case, the ternary quantizer makes the fastest progress per information bit, followed by the $3$-bit low-precision quantizer ($s=4$). In fact, the full gradient descent requires more bits in the order of magnitude to make $50\%$ progress than the ternary quantizer.
    
	The corresponding results for Q-IAG in the asynchronous parameter server setting are shown in Figure~\ref{fig:distCIAGIter} and \ref{fig:distCIAGCoordinates}. The results are qualitatively similar: sending the gradient vectors in higher precision yields the fastest convergence but can be extremely wasteful in terms of communication load. The low-precision quantizer allows us to make a gentle trade-off between the two objectives, having both a rapid and communication-efficient convergence. In particular, the results from covtype show that a fast convergence in terms of both iteration counts and communications load for the low-precision quantizer with the higher number of quantization levels.


\begin{figure}
\begin{subfigure}{.5\linewidth}
\centering
\includegraphics[scale = 0.44]{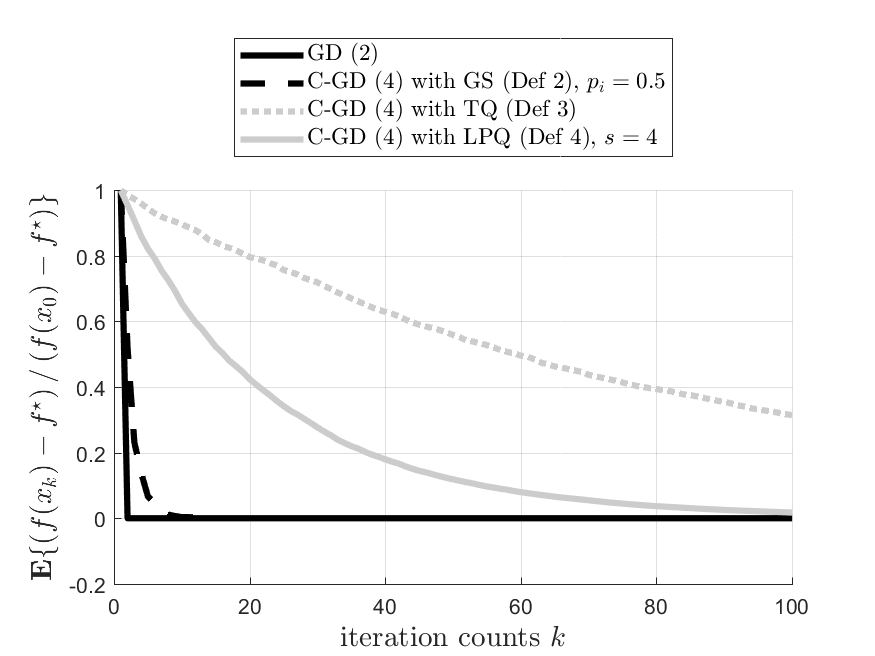}
\caption{}
\label{fig:CG_IterRealsim}
\end{subfigure}%
\begin{subfigure}{.5\linewidth}
\centering
\includegraphics[scale = 0.44]{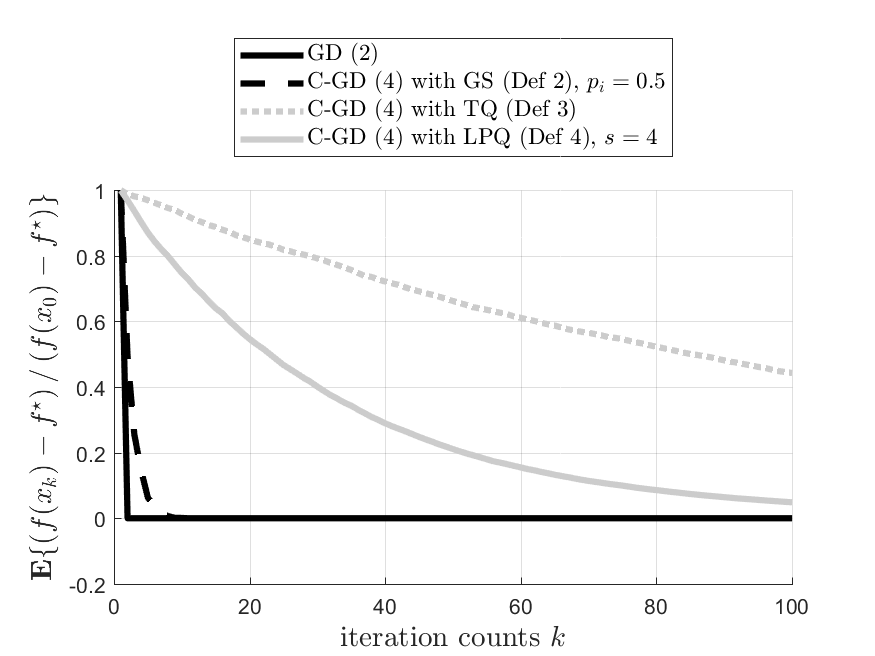}
\caption{}
\label{fig:CG_IterRCV1}
\end{subfigure}\\[1ex]
\begin{subfigure}{\linewidth}
\centering
\includegraphics[scale = 0.44]{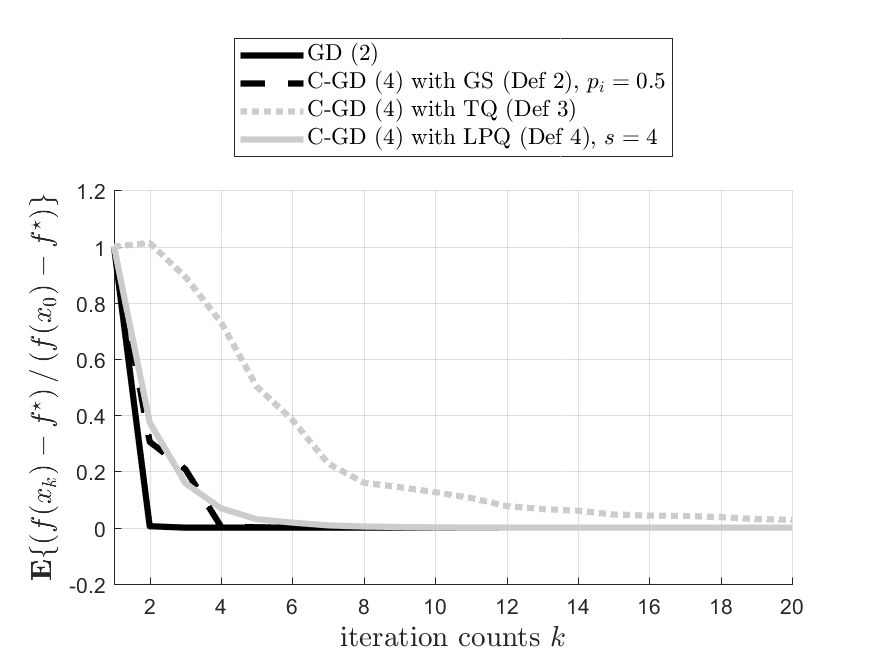}
\caption{}
\label{fig:CG_IterCovtype}
\end{subfigure}
\caption{Convergence of compressed gradient descent algorithms \eqref{eqn:QuantizedGD} using different compression techniques over real-world data sets; that is, (a) real-sim, (b) RCV1-train and (c) covtype. }
\label{fig:CGIter}
\end{figure}

\begin{figure}
\begin{subfigure}{.5\linewidth}
\centering
\includegraphics[scale = 0.44]{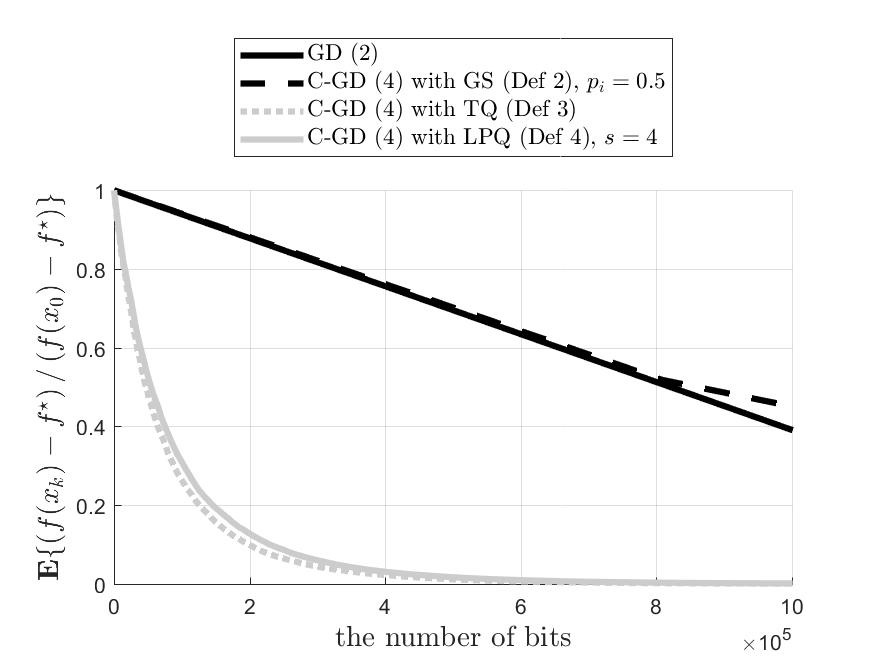}
\caption{}
\label{fig:CG_CoordRealsim}
\end{subfigure}%
\begin{subfigure}{.5\linewidth}
\centering
\includegraphics[scale = 0.44]{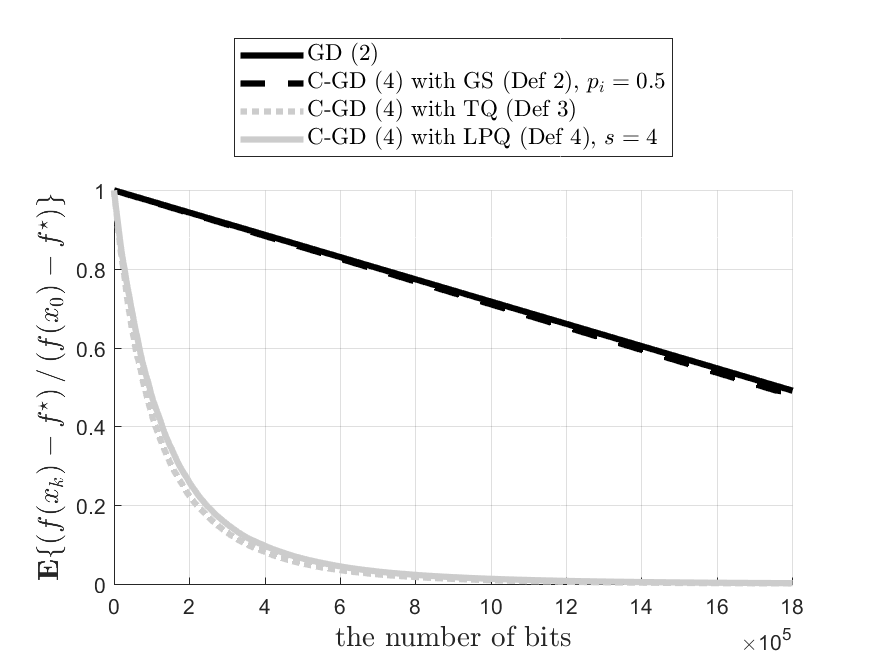}
\caption{}
\label{fig:CG_CoordRCV1}
\end{subfigure}\\[1ex]
\begin{subfigure}{\linewidth}
\centering
\includegraphics[scale = 0.44]{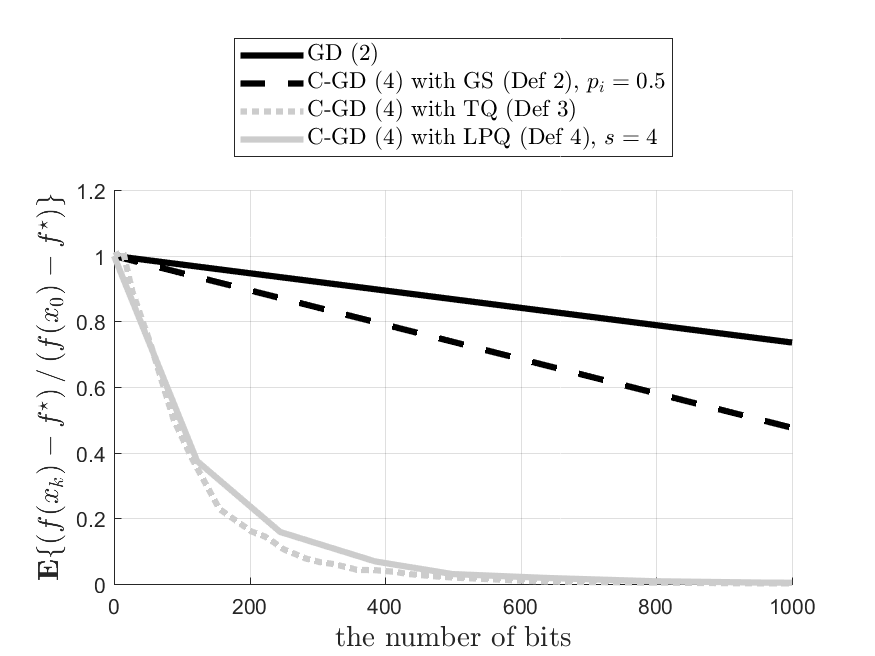}
\caption{}
\label{fig:CG_CoordCovtype}
\end{subfigure}
\caption{Convergence of compressed gradient descent algorithms \eqref{eqn:QuantizedGD} using different compression techniques over real-world data sets; that is, (a) real-sim, (b) RCV1-train and (c) covtype.}
\label{fig:CGCoordinates}
\end{figure}

\begin{figure}
\begin{subfigure}{.5\linewidth}
\centering
\includegraphics[scale = 0.44]{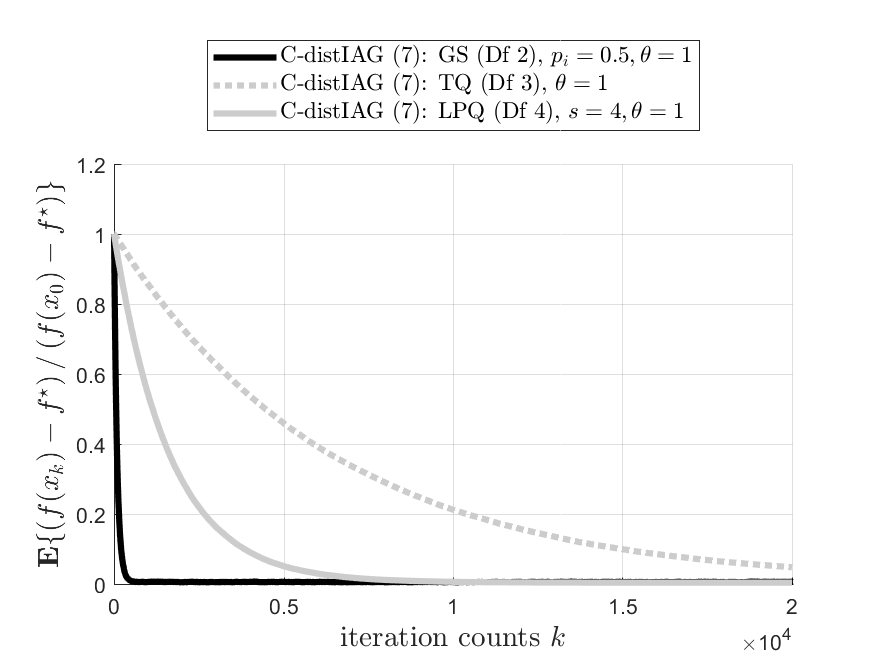}
\caption{}
\label{fig:distCIAG_IterRealsim}
\end{subfigure}%
\begin{subfigure}{.5\linewidth}
\centering
\includegraphics[scale = 0.44]{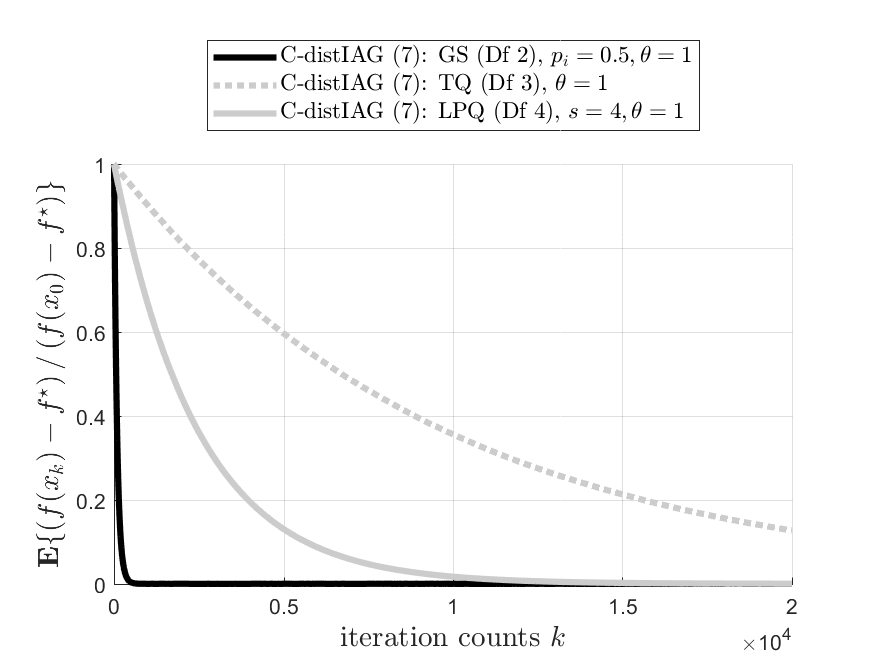}
\caption{}
\label{fig:distCIAG_IterRCV1}
\end{subfigure}\\[1ex]
\begin{subfigure}{\linewidth}
\centering
\includegraphics[scale = 0.44]{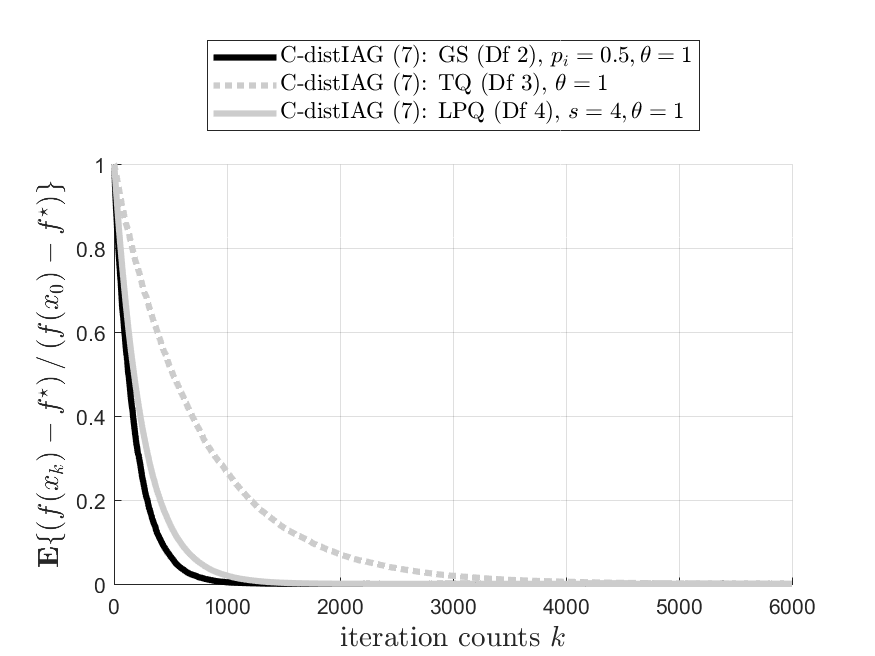}
\caption{}
\label{fig:distCIAG_IterCovtype}
\end{subfigure}
\caption{Convergence of Q-IAG algorithms \eqref{eqn:QIAG} using different compression techniques over real-world data sets; that is, (a) real-sim, (b) RCV1-train and (c) covtype. }
\label{fig:distCIAGIter}
\end{figure}

\begin{figure}
\begin{subfigure}{.5\linewidth}
\centering
\includegraphics[scale = 0.44]{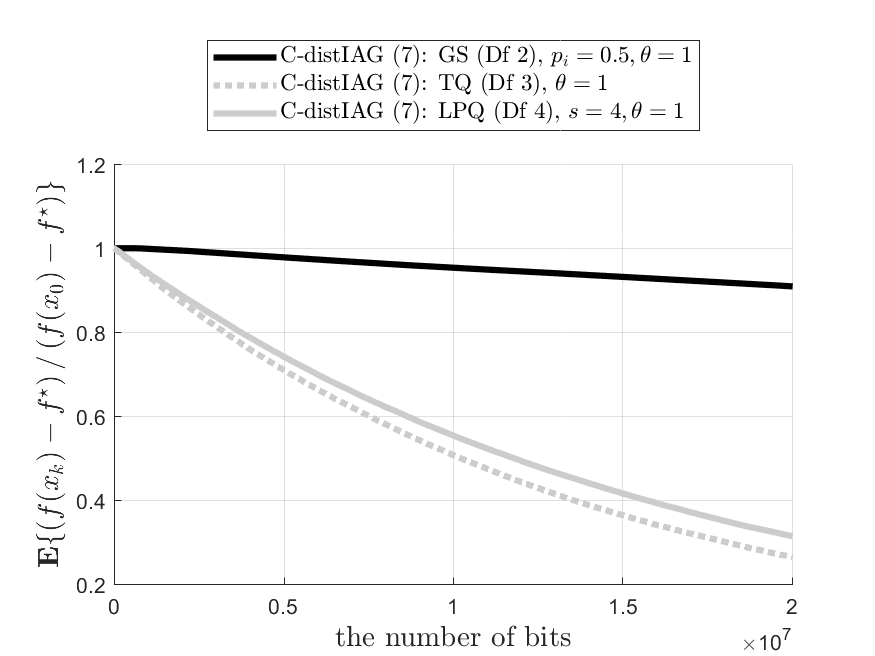}
\caption{}
\label{fig:distCIAG_CoordRealsim}
\end{subfigure}%
\begin{subfigure}{.5\linewidth}
\centering
\includegraphics[scale = 0.44]{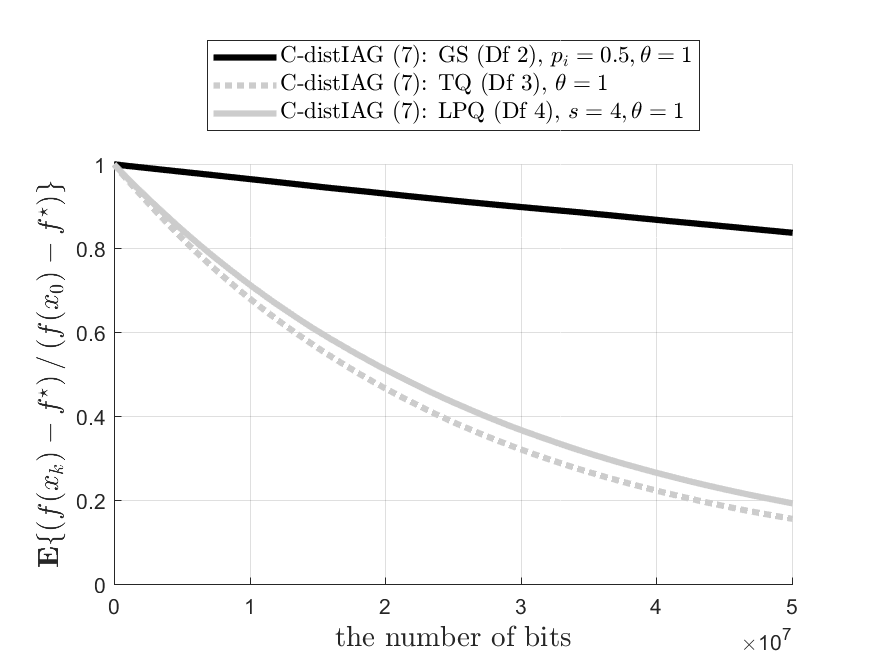}
\caption{}
\label{fig:distCIAG_CoordRCV1}
\end{subfigure}\\[1ex]
\begin{subfigure}{\linewidth}
\centering
\includegraphics[scale = 0.44]{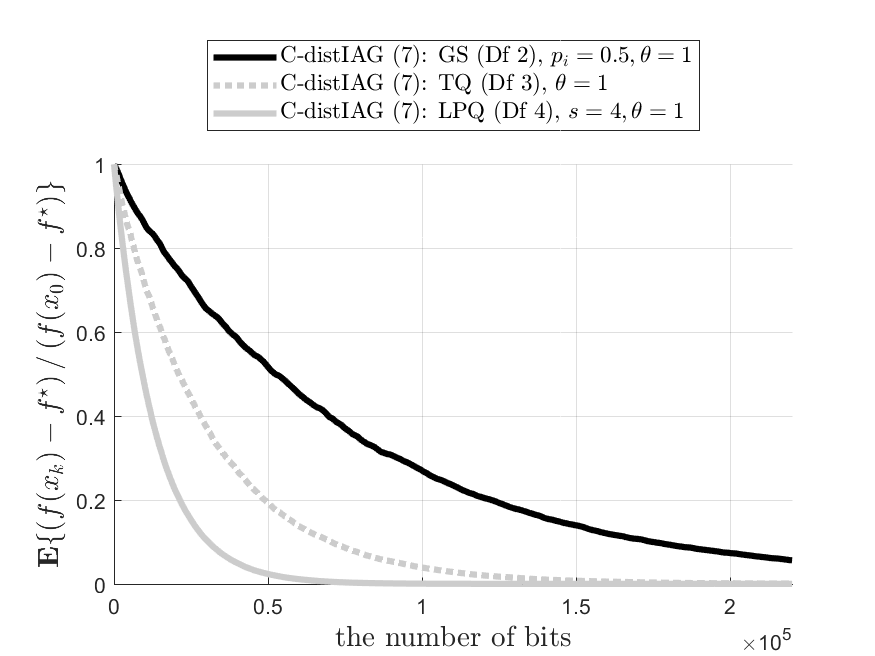}
\caption{}
\label{fig:distCIAG_CoordCovtype}
\end{subfigure}
\caption{Convergence of Q-IAG algorithms \eqref{eqn:QuantizedGD} using different compression techniques over real-world data sets; that is, (a) real-sim, (b) RCV1-train and (c) covtype.}
\label{fig:distCIAGCoordinates}
\end{figure}

   \section{Conclusions and Future Work}
    
    We have established a unified framework for both synchronous and asynchronous distributed optimization using compressed gradients. The framework builds on the concept of unbiased randomized quantizers (URQs), a class of gradient compression schemes which cover several important proposals from the literature~ \cite{wangni2017gradient,alistarh2016qsgd}. We have established non-asymptotic convergence rate guarantees for both GD and IAG with URQ compression. The convergence rate guarantees give explicit formulas for how quantization accuracy and staleness bounds affect the expected time to reach an $\varepsilon$-optimal solution. These results allowed us to characterize the trade-off between iteration and communication complexity of gradient descent under gradient compression. 
    
	We are currently working on extending the framework to allow for deterministic quantizers. Such quantizers are not necessarily unbiased, but satisfy additional inequalities which could be useful for the analysis. Another research direction is to establish non-asymptotic convergence rates under quantization-error compensation, which have been reported to work well in empirical studies~\cite{1-bit-stochastic-gradient-descents}. Finally, we would also like to analyze the effect of compressing the traffic from master to workers.

\appendix

%
%

\section{Proof of Lemma \ref{lemma:LipschitzWholeFcn}}  \label{app:lemma:LipschitzWholeFcn}
    For $x,y \in \mathbb{R}^d$, 
\({\left\| {x + y} \right\|^2} = {\left\| x \right\|^2} + {\left\| y \right\|^2} + 2\left\langle {x,y} \right\rangle \).  Together with the fact that $f(x)=\sum_{i=1}^m f_i(x)$, this property implies that  
\[
\begin{array}{l}
{\left\| {\nabla f(x) - \nabla f(y)} \right\|^2}  \\
%
%
%
%
= {\left\| {\nabla {f_1}(x) - \nabla {f_1}(y)} \right\|^2} + \sum\limits_{i = 2}^m {\left\langle {\nabla {f_1}(x) - \nabla {f_1}(y),\nabla {f_i}(x) - \nabla {f_i}(y)} \right\rangle } \\
\hspace{0.5cm} + {\left\| {\nabla {f_2}(x) - \nabla {f_2}(y)} \right\|^2} + \sum\limits_{i = 1,i \ne 2}^m {\left\langle {\nabla {f_2}(x) - \nabla {f_2}(y),\nabla {f_i}(x) - \nabla {f_i}(y)} \right\rangle }  + ...\\
\hspace{0.5cm} + {\left\| {\nabla {f_m}(x) - \nabla {f_m}(y)} \right\|^2} + \sum\limits_{i = 1}^{m - 1} {\left\langle {\nabla {f_m}(x) - \nabla {f_m}(y),\nabla {f_i}(x) - \nabla {f_i}(y)} \right\rangle } \\

= \sum\limits_{i = 1}^m {{{\left\| {\nabla {f_i}(x) - \nabla {f_i}(y)} \right\|}^2}}  + \sum\limits_{i = 1}^m {\sum\limits_{j = 1,j \ne i}^m {\left\langle {\nabla {f_i}(x) - \nabla {f_i}(y),\nabla {f_j}(x) - \nabla {f_j}(y)} \right\rangle } } {e_{i,j}},
\end{array}\]
where $e_{i,j}=1$ if $\textup{supp}(\nabla f_i(x)) \cap \textup{supp}(\nabla f_j(x)) \neq \emptyset$ and $0$ otherwise. By Cauchy-Schwarz's inequality, we have
\begin{align*}
{\left\| {\nabla f(x) - \nabla f(y)} \right\|^2} &\leq \sum\limits_{i = 1}^m {{{\left\| {\nabla {f_i}(x) - \nabla {f_i}(y)} \right\|}^2}}  \\
&\hspace{0.5cm}+ \sum\limits_{i = 1}^m {\sum\limits_{j = 1,j \ne i}^m {\left\| {\nabla {f_i}(x) - \nabla {f_i}(y)} \right\| \cdot \left\| {\nabla {f_j}(x) - \nabla {f_j}(y)} \right\|} } {e_{i,j}}. 
\end{align*}
The Lipschitz continuity of the gradients of component functions $f_i$  implies that
\begin{align*}
{\left\| {\nabla f(x) - \nabla f(y)} \right\|^2} 
%
%
%
%
&\leq L_{ }^2\left( {m + \sum\limits_{i = 1}^m {\sum\limits_{j = 1,j \ne i}^m {{e_{i,j}}} } } \right){\left\| {x - y} \right\|^2} \\ 
&= L_{}^2m\left( {1 + {\Delta_{\rm ave}}} \right){\left\| {x - y} \right\|^2},
\end{align*}
Notice that the sparsity pattern of $\nabla f_i(x)$ can be found using the data matrix $A$, \cite{Sarit2018}.

Next, we can tighten the bound using the maximum conflict degree $\Delta_{\max}$. By Cauchy-Schwarz's inequality and by the Lipschitz gradient assumption of $f_i$, we have
\begin{align*}
    {\left\| {\nabla f(x) - \nabla f(y)} \right\|^2} 
 & \leq L_{ }^2\left( {m + \sum\limits_{i = 1}^m {\sum\limits_{j = 1,j \ne i}^m {{e_{i,j}}} } } \right){\left\| {x - y} \right\|^2} \\ 
  & \leq L_{ }^2 m\left( { 1 + \Delta_{\max}  }  \right){\left\| {x - y} \right\|^2},
\end{align*}
where the last inequality derives from the definition of the maximum conflict graph degree. In conclusion, 
\begin{align*}
    \bar{L}^2 & =  L^2m\left( {1 + \Delta}\right),
\end{align*}
where $\Delta = {\min}(\Delta_{\rm ave},\Delta_{\max}).$
%
%

    %
    %
    \section{Proof of Theorem \ref{thm:SCSerialQGD}}  \label{app:thm:SCSerialQGD}
    
Using the distance between the iterates $\{x_k\}_{k\in \mathbb{N}}$  and the optimum $x^\star$, we have 
    \begin{align*}
        \|x_{k+1} - x^\star \|^2 & = \| x_k - x^\star \|^2 - 2\gamma_k \langle Q(\nabla f(x_k)),x_k-x^\star  \rangle + \gamma_k^2 \| Q(\nabla f(x_k)) \|^2.
    \end{align*}
Taking the expectation with respect to all the randomness in the algorithm yields
    \begin{align*}
        \mathbf{E}\|x_{k+1} - x^\star \|^2 & = \mathbf{E}\| x_k - x^\star \|^2 - 2\gamma_k \mathbf{E}\langle \nabla f(x_k),x_k-x^\star  \rangle + \gamma_k^2 \mathbf{E}\| Q(\nabla f(x_k)) \|^2\\
        & \leq \mathbf{E}\| x_k - x^\star \|^2 - 2\gamma_k \mathbf{E}\langle \nabla f(x_k),x_k-x^\star  \rangle + \gamma_k^2\alpha \mathbf{E}\| \nabla f(x_k) \|^2,
    \end{align*}
    where the inequality follows from the second property in Definition~\ref{def:UnbiasedRandQuant}. Denote $V_k = \mathbf{E}\|x_{k+1} - x^\star \|^2$. It follows from~\cite[Theorem 2.1.12]{nesterov2013introductory} that
    \begin{align*}
      V_{k+1}  &\leq  \left( 1 - 2\gamma_k\frac{\mu \bar L}{\mu + \bar L}  \right)V_k + \left( -2\gamma_k\frac{1}{\mu+\bar L} + \gamma_k^2 \alpha \right)\mathbf{E}\| \nabla f(x_k) \|^2\\
        & = \rho_k V_k + \left( -2\gamma_k\frac{1}{\mu+\bar L} + \gamma_k^2 \alpha \right)\mathbf{E}\| \nabla f(x_k) \|^2,
    \end{align*}
    where $\rho_k = 1-2\gamma_k\frac{\mu \bar L}{\mu +\bar L}$. If $-2\gamma_k/{(\mu+\bar L)} + \gamma_k^2 \alpha \leq 0$, or equivalently 
    \begin{align*}
    \gamma_k \in \left(0, \frac{2}{\alpha(\mu+\bar L)}\right],
    \end{align*}
    then $\rho_k \in [0,1)$ for $\alpha \geq 1$, and the second term on the right-hand side of the above inequality is non-positive. Therefore, $V_{k+1}\leq \rho_k V_k$, which implies that \( V_k \leq  \left(\prod_{i=1}^k \rho_i \right) V_0,\) for all \( k\in\mathbb{N}. \) If the step-sizes are constant ($\gamma_k=\gamma$ for all $k\in\mathbb{N}$), then \( V_k \leq \rho^k V_0,\) for \( k\in\mathbb{N}\).

    %
    %
    \section{Proof of Corollary \ref{corr:SCSerialQGD}} \label{app:corr:SCSerialQGD}
     From Theorem \ref{thm:SCSerialQGD}, we get $V_k \leq \rho^k \varepsilon_0$ with $V_k = \mathbf{E}\Vert x_k -x^\star \Vert^2$, or equivalently 
    \begin{align*}
        \left( 1-\frac{1}{\alpha}\frac{4\mu\cdot  \bar L}{(\mu+\bar L)^2}  \right)^k\varepsilon_0 \leq \varepsilon. 
    \end{align*}
    Since $-1/{\rm log}(1-x) \leq 1/x$ for $0<x\leq 1$ and $\rho\in (0,1)$, we reach the upper bound of $k^\star$. In addition, assume that the number of non-zero elements is at most $c$. Therefore, the number of bits required to code the vector  is at most $\left( {\rm log}_2d + B \right)c$ bits in each iteration, where $B$ is the number bits required to encode a single vector entry. Hence, we reach the upper bound of $B^\star$.

    %
    %
    \section{Proof of Theorem \ref{thm:QGDConvex}} \label{app:thm:QGDConvex}
    Denote $V_k =  \mathbf{E}\| x_k - x^\star \|^2$ and $\bar L =L\sqrt{m(1+\Delta)}$. Following the proof in Theorem \ref{thm:SCSerialQGD}, we have
    \begin{align*}
        V_{k+1}& \leq V_k - 2\gamma_k \mathbf{E}\langle \nabla f(x_k),x_k-x^\star  \rangle + \gamma_k^2\alpha \mathbf{E}\| \nabla f(x_k) \|^2. 
    \end{align*}    
    By  the property of Lipschitz continuity of $\nabla f(x)$, we have:  
    \begin{align*}
       \langle \nabla f(x_k),x_k-x^\star  \rangle & \geq   f(x_k) - f^\star + \frac{1}{2\bar L}\| \nabla f(x_k) \|^2,
    \end{align*}   
and by assuming that  $\gamma_k \leq 1/(\bar L\alpha)$, we get
\begin{align*}
 V_{k+1} \leq V_k - 2\gamma_k \mathbf{E}\left( f(x_k) - f^\star \right).
\end{align*}
    After the manipulation, we have: 
    \begin{equation}
        2 \sum_{k=0}^{T} \gamma_k \mathbf{E}\left( f(x_k) - f^\star \right) \leq V_0 - V_{T+1}. 
        \label{eqn:summaryQGDConvex}
    \end{equation}
    Again from the Lipschitz gradient assumption of $f$, we have
    \begin{align*}
        f(x_{k+1}) \leq f(x_k) - \gamma_k \langle \nabla f(x_k) , Q(\nabla f(x_k)) \rangle + \frac{\bar L\gamma_k^2}{2}\| Q(\nabla f(x_k)) \|^2. 
    \end{align*}
    Taking the expectation over all random variables yields
    \begin{align*}
       \mathbf{ E}f(x_{k+1}) \leq \mathbf{E} f(x_k)  - \left(\gamma_k - \frac{\bar L\alpha\gamma_k^2}{2} \right) \| \nabla f(x_k) \|^2,
    \end{align*}
    where we reach the inequality by properties stated in  Definition \ref{def:UnbiasedRandQuant}.  Due to the fact that $\gamma_k \leq 1/(\bar L\alpha)$ and the non-negativity of the Euclidean norm, we can conclude that $ \mathbf{ E}f(x_{k+1}) \leq \mathbf{E} f(x_k)$. From \eqref{eqn:summaryQGDConvex}, 
    \begin{align*}
    2\gamma_{\min} (T+1) \mathbf{E}\left( f(x_T) - f^\star \right) \leq \mathbf{E}\| x_0 - x^\star \|^2 - \mathbf{E}\|x_{T+1} - x^\star \|^2, 
    \end{align*}
    or equivalently 
    \begin{align*}
        \mathbf{E}\left( f(x_T) - f^\star \right) \leq \frac{1}{2\gamma_{\min}(T+1)}\mathbf{E}\| x_0 - x^\star \|^2,
    \end{align*}
    where $\gamma_{\min} = \mathop{\min}\limits_{k\in[0,T]} \gamma_k.$ Plugging $\gamma_{\min} = {1}/{(\bar L \alpha)}$  yields the result.

    %
    %
    \section{Complexity of Compressed GD Algorithm for Convex Optimization} \label{app:corr:QGDConvex}
  The upper bound of $T^\star$ is easily obtained by using the inequality in Theorem \ref{thm:QGDConvex}. Also,  assume that the number of non-zero elements is at most $c$. Therefore, the number of bits required to code the vector is at most $\left( {\rm log}_2d + B \right)c$ bits in each iteration, where $B$ is the number bits required to encode a single vector entry. Hence, we reach the upper bound of $B^\star$.

    %
    %
    \section{Proof of Lemma \ref{lemma:datasparsityTrick}} \label{app:lemma:datasparsityTrick}
    Denote $s_i^k = {\rm supp}(Q(a_i))$. By Assumption \ref{assum:SparsityEmpiricalLoss} and the definition of the Euclidean norm, 
    \begin{align*}
    \left\|  \sum_{i=1}^m Q(\nabla f_i(x_k)) \right\|^2  
    & = \sum_{i=1}^m \|Q(\nabla f_i(x_k)) \|^2 + \sum_{i=1}^m   \sum_{j=1,j\neq i}^{m} \langle  Q(\nabla f_i(x_k)), Q(\nabla f_j(x_k))\rangle \\
    & \leq \sum_{i=1}^m \|Q(\nabla f_i(x_k)) \|^2  + T,\intertext{where}
   T & =\sum_{i=1}^m   \sum_{j=1,j\neq i}^{m} \| Q(\nabla f_i(x_k))\| \| Q(\nabla f_j(x_k))\| \mathbf{1}\left( s_i^k \cap s_j^k \neq \emptyset  \right).
    \end{align*}
    We reach the inequality by Cauchy-Schwarz's inequality. For simplicity, let $e_{i,j} = \mathbf{1}\left( s_i^k \cap s_j^k \neq \emptyset  \right)$ and  $\Delta_i = \sum_{j=1,j\neq i}^m e_{i,j}.$ Therefore, we define the maximum conflict degree $\Delta_{\max}^k = \max_{i\in[1,m]}\Delta_i $ and the average conflict degree $\Delta_{\rm ave}^k = (\sum_{i=1}^m \Delta_i)/m$. Now, we bound the left-hand side by using two different data sparsity measures. First, we bound $T$ by using the maximum conflict degree $\Delta_{\max}^k$. By the fact that $2ab\leq a^2+b^2$ for $a,b\in\mathbb{R}$, we have 
   \begin{align*}
       T & \leq \frac{1}{2}\sum_{i=1}^m   \sum_{j=1,j\neq i}^{m} \left( \| Q(\nabla f_i(x_k))\|^2 + \| Q(\nabla f_j(x_k))\|^2 \right)e_{i,j}  \\
&\leq \Delta_{\max}^k \sum_{i=1}^m \| Q(\nabla f_i(x_k))\|^2.
   \end{align*}
   Therefore, 
     \begin{align*}
      \left\|  \sum_{i=1}^m Q(\nabla f_i(x_k)) \right\|^2 \leq (1+\Delta_{\max}^k)\sum_{i=1}^m \| Q(\nabla f_i(x_k))\|^2.
      \end{align*}
    Next, we bound the left-hand side by using the average conflict degree $\Delta_{\rm ave}^k$. By Cauchy-Schwarz's inequality, we get: 
    \begin{align*}
      \left\|  \sum_{i=1}^m Q(\nabla f_i(x_k)) \right\|^2   & = \sum_{i=1}^m   \| Q(\nabla f_i(x_k)\| \sum_{j=1}^{m} \| Q(\nabla f_j(x_k)) \| e_{i,j} \\
       & \leq  \sum_{i=1}^m \| Q(\nabla f_i(x_k))\| \sqrt{\sum_{j=1}^{m} \| Q(\nabla f_j(x_k))\|^2 \sum_{j=1}^{m}  e^2_{i,j}} \\ 
       & \leq \sqrt{\sum_{i=1}^m \| Q(\nabla f_i(x_k))\|^2 } \sqrt{\sum_{j=1}^{m} \| Q(\nabla f_j(x_k))\|^2} \sqrt{\sum_{i=1}^m\sum_{j=1}^{m}  e^2_{i,j}} \\ 
       & \leq \sqrt{\sum_{i=1}^m\sum_{j=1}^{m}  e_{i,j}}\sum_{i=1}^m \| Q( \nabla f_i(x_k))\|^2 \\
       & = \sqrt{m(1+\Delta_{\rm ave}^k)}\sum_{i=1}^m \| Q(\nabla f_i(x_k))\|^2.
   \end{align*}
    In conclusion, 
    \begin{align*}
         \left\|  \sum_{i=1}^m Q(\nabla f_i(x_k)) \right\|^2   \leq \sigma_k \sum_{i=1}^m \| Q(\nabla f_i(x_k))\|^2,
    \end{align*}
    where $\sigma_k = {\min}\left( \sqrt{m(1+\Delta_{\rm ave}^k)},1+\Delta_{\max}^k \right)$.
    
    %
    %
    \section{Proof of Theorem \ref{thm:QthenIAGSCTrickHamid}} \label{app:thm:QthenIAGSCTrickHamid}
    
    Denote  $g_k = \sum_{i=1}^m \nabla f_i(x_{k-\tau_k^i}).$  Let us first introduce two main lemmas which are instrumental to our main analysis. 
        
    \begin{lemma}
    \label{lemma:errorQIAGnondist2}
    Consider the iterates generated by~\eqref{eqn:QIAGnonDistr}. For $k\in\mathbb{N}_0$, 
    \begin{align*}
        \| g_k  \|^2  \leq \frac{2 {\bar L}^2 }{\mu} \mathop{\max}\limits_{s\in[k-\tau,k]} f(x_{s}) -f(x^\star),
     \end{align*}
    where  $\bar L =L\sqrt{m(1+\Delta)}$ and $\Delta = {\min}(\Delta_{\rm ave},\Delta_{\max})$.
    \end{lemma}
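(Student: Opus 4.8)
The plan is to exploit that $x^\star$ minimizes $f$, so that $\sum_{i=1}^m \nabla f_i(x^\star) = \nabla f(x^\star) = 0$, and to rewrite
\(
g_k = \sum_{i=1}^m\bigl(\nabla f_i(x_{k-\tau_k^i}) - \nabla f_i(x^\star)\bigr).
\)
Setting $d_i := \nabla f_i(x_{k-\tau_k^i}) - \nabla f_i(x^\star)$, I would expand $\|g_k\|^2 = \sum_i\|d_i\|^2 + \sum_i\sum_{j\ne i}\langle d_i,d_j\rangle$ and invoke Assumption~\ref{assum:SparsityEmpiricalLoss}, which forces ${\rm supp}(d_i)\subseteq{\rm supp}(a_i)$. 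Hence every cross term with non-overlapping supports vanishes, i.e. $\langle d_i,d_j\rangle = \langle d_i,d_j\rangle\,e_{i,j}$ with $e_{i,j} = \mathbf{1}\{{\rm supp}(a_i)\cap{\rm supp}(a_j)\ne\emptyset\}$. This is precisely the conflict-graph bookkeeping already performed in the proof of Lemma~\ref{lemma:LipschitzWholeFcn}.

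The key observation is that the derivation of Lemma~\ref{lemma:LipschitzWholeFcn} uses nothing about the evaluation points beyond the per-component bound $\|\nabla f_i(u)-\nabla f_i(v)\|\le L\|u-v\|$. Writing $M := \max_{s\in[k-\tau,k]}\|x_s - x^\star\|$, each term obeys the uniform estimate $\|d_i\| \le L\|x_{k-\tau_k^i}-x^\star\| \le LM$. Substituting this common bound into the same Cauchy--Schwarz / conflict-graph chain then gives $\|g_k\|^2 \le L^2\bigl(m + \sum_{i}\sum_{j\ne i}e_{i,j}\bigr)M^2 = L^2 m(1+\Delta)M^2 = \bar L^2 M^2$, where the $m$ diagonal terms and the surviving cross terms (bounded by $m\Delta_{\rm ave}$ through the average-degree count, or by $m\Delta_{\max}$ through the $2ab\le a^2+b^2$ trick) yield the factor $m(1+\Delta)$ with $\Delta = \min(\Delta_{\rm ave},\Delta_{\max})$. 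The main thing to get right is that the heterogeneous delays make the arguments $x_{k-\tau_k^i}$ differ across $i$, so Lemma~\ref{lemma:LipschitzWholeFcn} does not apply verbatim; the device that rescues the argument is collapsing all these deviations into the single worst-case factor $M$ \emph{before} counting conflict-graph terms, which still retains the tight $\bar L^2$ constant rather than the crude $m^2L^2$ one would get from $\|\sum_i d_i\|^2\le m\sum_i\|d_i\|^2$.

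Finally I would convert $M^2$ into a function-value gap via Assumption~\ref{assum:mustronglyconvex}: taking $x=x^\star$, $y=x_s$ in the strong-convexity inequality and using $\nabla f(x^\star)=0$ gives $\|x_s-x^\star\|^2 \le \tfrac{2}{\mu}\bigl(f(x_s)-f(x^\star)\bigr)$, so $M^2 \le \tfrac{2}{\mu}\max_{s\in[k-\tau,k]}\bigl(f(x_s)-f(x^\star)\bigr)$. Combining this with $\|g_k\|^2\le\bar L^2 M^2$ delivers the claimed bound $\|g_k\|^2 \le \tfrac{2\bar L^2}{\mu}\max_{s\in[k-\tau,k]}\bigl(f(x_s)-f(x^\star)\bigr)$.
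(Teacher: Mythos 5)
Your proposal is correct and follows essentially the same route as the paper: rewrite $g_k=\sum_i\bigl(\nabla f_i(x_{k-\tau_k^i})-\nabla f_i(x^\star)\bigr)$ using $\nabla f(x^\star)=0$, rerun the conflict-graph/Cauchy--Schwarz argument of Lemma~\ref{lemma:LipschitzWholeFcn} with every per-component distance collapsed to $\max_{s\in[k-\tau,k]}\|x_s-x^\star\|$, and finish with the strong-convexity bound $\|x_s-x^\star\|^2\le\tfrac{2}{\mu}\bigl(f(x_s)-f(x^\star)\bigr)$. The only difference is presentational: you spell out explicitly the heterogeneous-delay subtlety that the paper compresses into ``following the proof of Lemma~\ref{lemma:LipschitzWholeFcn} \ldots{} together with $\|x_{k-\tau_k^i}-x^\star\|\le\max_s\|x_s-x^\star\|$''.
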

    
    \begin{proof}
    Since $\nabla f(x^\star) = 0$, we have
    \begin{align*}
        \| g_k \|^2  = \left\|\sum_{i=1}^m  \nabla f_i(x_{k-\tau_k^i}) -\nabla f_i(x^\star) \right\|^2. 
\end{align*}
Following the proof of Lemma \ref{lemma:LipschitzWholeFcn} with $x=x_{k-\tau_k^i}$ and $y=x^\star$ yields 
\begin{align*}
     \| g_k \|^2    
        & \leq {\bar L}^2 \mathop{\max}\limits_{s\in[k-\tau,k]} \| x_{s} -x^\star \|^2,
    \end{align*}
    where $\bar L =L\sqrt{m(1+\Delta)}$ and $\Delta = {\min}(\Delta_{\rm ave},\Delta_{\max})$. The result also uses the fact that \[\|x_{k-\tau_k^i}-x^\star\| \leq \mathop{\max}\limits_{s\in[k-\tau,k]} \| x_{s} -x^\star \|.\] Since
    \begin{align*}
    f(x) - f(x^\star)\geq \frac{\mu}{2} \| x -x^\star \|^2,
   \end{align*}
    for any $x$, it follows that
    \begin{align*}
        \| g_k \|^2\leq \frac{2 {\bar L}^2 }{\mu} \mathop{\max}\limits_{s\in[k-\tau,k]} f(x_{s}) -f(x^\star).
    \end{align*}
    \end{proof}
    
    \begin{lemma}
    \label{lemma:errorQIAGnondist1}
    The sequence $\{x_k\}$ generated by~\eqref{eqn:QIAGnonDistr} satisfies 
    \begin{align*}
               \mathbf{E} \| \nabla f(x_k) - g_k \|^2  \leq \frac{2\gamma^2{\bar L}^4 \tau^2 \alpha}{\mu} \mathop{\max}\limits_{s\in[k-2\tau,k]} f(x_{s}) -f(x^\star),
    \end{align*}
    for $k\in\mathbb{N}_0$, where $\bar L =L\sqrt{m(1+\Delta)}$ and $\Delta = {\min}(\Delta_{\rm ave},\Delta_{\max})$.
    \end{lemma}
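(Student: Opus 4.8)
The plan is to chain three estimates: a sparsity-aware Lipschitz bound on $\nabla f(x_k) - g_k$ expressed through the iterate displacements over the delay window, a bound on those displacements obtained from the update rule \eqref{eqn:QIAGnonDistr}, and finally Lemma \ref{lemma:errorQIAGnondist2} to turn the resulting gradient norms into function-value suboptimalities.

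First I would write $\nabla f(x_k) - g_k = \sum_{i=1}^m \bigl(\nabla f_i(x_k) - \nabla f_i(x_{k-\tau_k^i})\bigr)$ and replay the conflict-graph computation from the proof of Lemma \ref{lemma:LipschitzWholeFcn}, now with the summand $v_i := \nabla f_i(x_k) - \nabla f_i(x_{k-\tau_k^i})$. By Assumption \ref{assum:SparsityEmpiricalLoss} we have ${\rm supp}(v_i) \subseteq {\rm supp}(a_i)$, so exactly the same conflict indicators $e_{i,j}$ govern the cross terms; combining Cauchy--Schwarz, the inequality $2ab \le a^2 + b^2$ (for the $\Delta_{\max}$ route) or the averaging argument (for the $\Delta_{\rm ave}$ route), and the $L$-Lipschitz bound $\|v_i\| \le L\|x_k - x_{k-\tau_k^i}\|$, yields $\|\nabla f(x_k) - g_k\|^2 \le \bar L^2 \max_{s\in[k-\tau,k]}\|x_k - x_s\|^2$ with $\bar L^2 = L^2 m(1+\Delta)$.

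Next, the telescoping identity $x_k - x_s = -\gamma \sum_{j=s}^{k-1} Q(g_j)$ together with $\bigl\|\sum_{i=1}^N y_i\bigr\|^2 \le N\sum_{i=1}^N \|y_i\|^2$ and the bounded-delay assumption $k-s \le \tau$ gives $\|x_k - x_s\|^2 \le \gamma^2 \tau \sum_{j=k-\tau}^{k-1}\|Q(g_j)\|^2$, and hence $\|\nabla f(x_k) - g_k\|^2 \le \bar L^2 \gamma^2 \tau \sum_{j=k-\tau}^{k-1}\|Q(g_j)\|^2$. Taking expectations and invoking the variance bound (Property 3 of Definition \ref{def:UnbiasedRandQuant}), $\mathbf{E}\|Q(g_j)\|^2 \le \alpha\,\mathbf{E}\|g_j\|^2$, then applying Lemma \ref{lemma:errorQIAGnondist2} to each $\|g_j\|^2$ converts the right-hand side into $\frac{2\bar L^4 \gamma^2 \tau \alpha}{\mu}\sum_{j=k-\tau}^{k-1}\max_{s\in[j-\tau,j]}\mathbf{E}\bigl(f(x_s)-f(x^\star)\bigr)$.

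The final step is index bookkeeping: for every $j\in[k-\tau,k-1]$ the inner window $[j-\tau,j]$ lies inside $[k-2\tau,k]$, so each of the $\tau$ summands is bounded by $\max_{s\in[k-2\tau,k]}\mathbf{E}\bigl(f(x_s)-f(x^\star)\bigr)$, supplying the second factor of $\tau$ and the claimed window. I expect this window bookkeeping --- verifying that the nested maxima collapse onto $[k-2\tau,k]$ and that the two separate uses of the delay bound combine into the factor $\tau^2$ --- to be the only genuine subtlety; the sparsity estimate is a verbatim reuse of Lemma \ref{lemma:LipschitzWholeFcn} once one checks that the supports, and hence the conflict degrees, are unchanged.
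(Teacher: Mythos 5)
Your proposal is correct and follows essentially the same route as the paper's own proof: the conflict-graph argument of Lemma \ref{lemma:LipschitzWholeFcn} applied to the delayed-gradient differences, telescoping the iterate displacements through the update \eqref{eqn:QIAGnonDistr} under the bounded-delay assumption, the URQ variance bound $\mathbf{E}\|Q(g_j)\|^2 \leq \alpha\,\mathbf{E}\|g_j\|^2$, and finally Lemma \ref{lemma:errorQIAGnondist2} together with the window inclusion $[j-\tau,j]\subseteq[k-2\tau,k]$ to collect the factor $\tau^2$. The only cosmetic difference is that you take the maximum of $\|x_k-x_s\|$ over the time window $s\in[k-\tau,k]$ where the paper maximizes over the worker index $i\in[1,m]$; these coincide under $\tau_k^i\leq\tau$.
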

    \begin{proof}
    By the definition of $g_k$, 
        \begin{align*}
        \| \nabla f(x_k) - g_k \|^2 & = \left  \|\sum_{i=1}^m \nabla f_i(x_k) - \nabla f_i(x_{k-\tau_k^i})\right\|^2.
        \end{align*}
    Following the proof of Lemma \ref{lemma:LipschitzWholeFcn} with $x=x_{k}$ and $y=x_{k-\tau_k^i}$ yields 
      \begin{align*}
        \| \nabla f(x_k) - g_k \|^2 
        & \leq \bar L^2 \mathop{\max}\limits_{i\in[1,m]}\left\| x_k - x_{k-\tau_k^i}\right\|^2, \\ 
        \end{align*}
  where $\bar L =L\sqrt{m(1+\Delta)}$ and $\Delta= {\min}(\Delta_{\rm ave},\Delta_{\max})$. We also reach the result by the fact that \[\left\| x_k - x_{k-\tau_k^i}\right\| \leq \mathop{\max}\limits_{i\in[1,m]}\left\| x_k - x_{k-\tau_k^i}\right\|.\] 
Next, notice that 
\begin{align*}
      \| \nabla f(x_k) - g_k \|^2     & \leq \bar L^2 \mathop{\max}\limits_{i\in[1,m]}\left\| \sum_{j = k-\tau_k^i}^{k-1} x_{j+1} - x_j\right\|^2 \\ 
      & \leq \bar L^2  \mathop{\max}\limits_{i\in[1,m]} \tau_k^i \sum_{j = k-\tau_k^i}^{k-1}  \left \|x_{j+1} - x_j\right\|^2\\
      & \leq \bar L^2 \tau \sum_{j = k-\tau}^{k-1}  \left \|x_{j+1} - x_j\right\|^2 \\
      & = \bar L^2 \gamma^2\tau  \sum_{j = k-\tau}^{k-1}  \left \|Q(g_j)\right\|^2.
    \end{align*}
   The second inequality derives from the bounded delay assumption. Taking the expectation with respect to the randomness yields
     \begin{align*}   
          \mathbf{E}\| \nabla f(x_k) - g_k \|^2\leq  \gamma^2 {\bar L}^2 \tau \alpha  \sum_{j = k-\tau}^{k-1}  \left \|g_j\right\|^2.
     \end{align*}
     It follows from Lemma~\ref{lemma:errorQIAGnondist2} that
     \begin{align*}
      \mathbf{E}\| \nabla f(x_k) - g_k \|^2 &\leq \frac{2\gamma^2{\bar L}^4 \tau \alpha}{\mu} \sum_{j = k-\tau}^{k-1} \mathop{\max}\limits_{s\in[j-\tau,j]} f(x_{s}) -f(x^\star)\\
      & \leq \frac{2\gamma^2{\bar L}^4 \tau^2 \alpha}{\mu} \mathop{\max}\limits_{s\in[k-2\tau,k]} f(x_{s}) -f(x^\star).
     \end{align*}    
    \end{proof}
    
  We now prove Theorem \ref{thm:QthenIAGSCTrickHamid}. Since the entire cost function $f$ has Lipschitz continuous gradient with constant $\bar L$, we have
  \begin{align*}
  f(x_{k+1}) - f(x^\star)  \leq f(x_k)- f(x^\star) - \gamma \langle Q(g_k), \nabla f(x_k) \rangle + \frac{\gamma^2 \bar L}{2}\| Q(g_k) \|^2.
  \end{align*}
  Taking the expectation with respect to the randomness and using the second property in Definition~\ref{def:UnbiasedRandQuant}, we obtain
  \begin{align*}
  \mathbf{E} [f(x_{k+1})- f(x^\star)] \leq \mathbf{E}[f(x_k)- f(x^\star)] - \gamma\mathbf{E} [\langle g_k, \nabla f(x_k) \rangle] + \frac{\gamma^2 \alpha\bar L}{2}\mathbf{E} [\| g_k \|^2].
  \end{align*}
  If $\gamma \alpha \bar{L}\leq 1$, then $\gamma^2 \alpha\bar L \leq \gamma$, which implies that
  \begin{align*}
  \mathbf{E} [f(x_{k+1})- f(x^\star)] \leq \mathbf{E}[f(x_k)- f(x^\star)] - \gamma\mathbf{E} [\langle g_k, \nabla f(x_k) \rangle] + \frac{\gamma}{2}\mathbf{E} [\| g_k \|^2].
  \end{align*}
  Using $g_k = g_k - \nabla f(x_k) + \nabla f(x_k)$, we have
  \begin{align*}
  \mathbf{E} [f(x_{k+1})- f(x^\star)] &\leq \mathbf{E}[f(x_k)- f(x^\star)] - \frac{\gamma}{2}\mathbf{E}[\| \nabla f(x_k)\|^2] + \frac{\gamma}{2}\mathbf{E} [\| g_k-\nabla f(x_k) \|^2]\\
  & \leq (1-\gamma \mu) \mathbf{E}[f(x_k)- f(x^\star)] + \frac{\gamma}{2}\mathbf{E} [\| g_k-\nabla f(x_k) \|^2],
  \end{align*}
  where the second inequality follows from the fact that
  \begin{align*}
  f(x) - f(x^\star) \leq \frac{1}{2\mu} \| \nabla f(x) \|^2,
  \end{align*}
  for any $x$. It follows from Lemma~\ref{lemma:errorQIAGnondist1} that
  \begin{align*}
  \mathbf{E} [f(x_{k+1})- f(x^\star)] & \leq (1-\gamma \mu) \mathbf{E}[f(x_k)- f(x^\star)] + \frac{\gamma^3{\bar L}^4 \tau^2 \alpha}{\mu} \mathop{\max}\limits_{s\in[k-2\tau,k]} f(x_{s}) -f(x^\star).
  \end{align*}
 This inequality can be rewritten as
 \begin{align*}
        V_{k+1} & \leq p V_k + q  \mathop{\max}\limits_{s\in [k-2\tau,k]} V_s, \\ \intertext{where}
        V_k & =  \mathbf{E}[f(x_k)- f(x^\star)] \\ 
        p & = 1- \gamma \mu \\ 
        q & =  \frac{\gamma^3{\bar L}^4 \tau^2 \alpha}{\mu}.
    \end{align*}
     According to Lemma 1 of \cite{aytekin2014asynchronous}, if $p+q < 1$, or, equivalently, 
     \begin{align*}
         \gamma < \frac{\mu}{{\bar L}^2\tau \sqrt{\alpha}},
     \end{align*}
 then $V_k \leq (p+q)^{k/(1+2\tau)} V_0$. This completes the proof.

    %
    %
    \section{ Proof of Corollary \ref{corr:QthenIAGSCTrickHamid}}\label{app:corr:QthenIAGSCTrickHamid}
    From Theorem \ref{thm:QthenIAGSCTrickHamid}, we get $V_k \leq \rho^k \varepsilon_0$ with $V_k = \mathbf{E}\left( f(x_k) - f^\star \right)$, or equivalently 
    \begin{align*}
        \left(   1- \mu\gamma + \bar L^4 \gamma^3 \tau^2 \frac{\alpha}{\mu}\right)^{\frac{k}{1+2\tau}}\varepsilon_0 \leq \varepsilon. 
    \end{align*}
    Since $-1/{\rm log}(1-x) \leq 1/x$ for $0<x\leq 1$ and $\rho \in (0,1)$, we reach the upper bound of $k^\star$. In addition, assume that the number of non-zero elements is at most $c$. Therefore, the number of bits required to code the vector is at most $\left( {\rm log}_2d + B \right)c$ bits in each iteration, where $B$ is the number bits required to encode a single vector entry. Hence, we reach the upper bound of $B^\star$.


    \section{Proof of Theorem \ref{thm:CompressedIAGNonstronglyConvex}  } \label{app:thm:CompressedIAGNonstronglyConvex}
     Define $g_k = \sum_{i=1}^m \nabla f_i(x_{k-\tau_k^i})$. Let us introduce three main lemmas which are instrumental in our main analysis. 
    
    \begin{lemma}
    \label{lemma:errorsquareNSC1}
    The sequence $\{x_k \}$ generated by \eqref{eqn:QIAGnonDistr} satisfies 
    \begin{align*}
      \| \nabla f(x_k) - g_k \|^2  \leq  \bar L^2 \gamma^2\tau  \sum_{j = k-\tau}^{k-1}  \left \|Q(g_j)\right\|^2,
    \end{align*}
     where $\bar L =  L\sqrt{m(1+\Delta)}$ and $\Delta = {\min}(\Delta_{\rm ave},\Delta_{\max})$.
    \end{lemma}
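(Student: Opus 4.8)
The plan is to follow the argument of Lemma~\ref{lemma:errorQIAGnondist1} essentially verbatim, since the error quantity $\nabla f(x_k) - g_k$ and the update~\eqref{eqn:QIAGnonDistr} are the same here. First I would rewrite the error as a sum over components, $\nabla f(x_k) - g_k = \sum_{i=1}^m \big(\nabla f_i(x_k) - \nabla f_i(x_{k-\tau_k^i})\big)$, and then bound its squared norm by re-running the proof of Lemma~\ref{lemma:LipschitzWholeFcn}. The only departure from that lemma is that the second argument $x_{k-\tau_k^i}$ now depends on $i$; nevertheless, expanding the square, applying Cauchy--Schwarz to the cross terms, and using the componentwise Lipschitz bound $\|\nabla f_i(x_k) - \nabla f_i(x_{k-\tau_k^i})\| \le L\|x_k - x_{k-\tau_k^i}\|$ still produces the same conflict-graph-weighted constant, so that
\[
\|\nabla f(x_k) - g_k\|^2 \le \bar L^2 \max_{i\in[1,m]} \|x_k - x_{k-\tau_k^i}\|^2,
\]
with $\bar L = L\sqrt{m(1+\Delta)}$ and $\Delta = \min(\Delta_{\rm ave},\Delta_{\max})$.

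Next I would telescope each displacement over the intervening iterations. Writing $x_k - x_{k-\tau_k^i} = \sum_{j=k-\tau_k^i}^{k-1}(x_{j+1}-x_j)$ and invoking the elementary inequality $\|\sum_{n=1}^N z_n\|^2 \le N\sum_{n=1}^N\|z_n\|^2$ gives $\|x_k - x_{k-\tau_k^i}\|^2 \le \tau_k^i \sum_{j=k-\tau_k^i}^{k-1}\|x_{j+1}-x_j\|^2$. I would then use the bounded-staleness assumption $\tau_k^i \le \tau$ to enlarge both the prefactor and the summation range, and take the maximum over $i$, obtaining $\max_i \|x_k - x_{k-\tau_k^i}\|^2 \le \tau \sum_{j=k-\tau}^{k-1}\|x_{j+1}-x_j\|^2$.

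Finally I would substitute the update rule. Since~\eqref{eqn:QIAGnonDistr} reads $x_{j+1}-x_j = -\gamma Q(g_j)$, we have $\|x_{j+1}-x_j\|^2 = \gamma^2\|Q(g_j)\|^2$, and combining the three estimates yields the claimed bound $\|\nabla f(x_k) - g_k\|^2 \le \bar L^2\gamma^2\tau\sum_{j=k-\tau}^{k-1}\|Q(g_j)\|^2$. I expect no serious obstacle: the estimate is entirely deterministic, so no expectation is taken, and the one point demanding care is the very first step, where the $i$-dependence of the delayed argument must be absorbed into $\max_{i}\|x_k-x_{k-\tau_k^i}\|$ while still extracting the tight constant $\bar L$ from the sparsity argument of Lemma~\ref{lemma:LipschitzWholeFcn} rather than the looser $\bar L \le mL$.
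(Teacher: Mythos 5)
Your proposal is correct and follows exactly the paper's own route: the paper proves this lemma by citing the argument of Lemma~\ref{lemma:errorQIAGnondist1}, which is precisely the three-step chain you describe — the sparsity-aware Lipschitz bound from Lemma~\ref{lemma:LipschitzWholeFcn} with the $i$-dependent delayed point absorbed into $\max_{i}\|x_k - x_{k-\tau_k^i}\|$, telescoping with $\|\sum_n z_n\|^2 \le N\sum_n \|z_n\|^2$ and the bound $\tau_k^i \le \tau$, and substitution of the update $x_{j+1}-x_j = -\gamma Q(g_j)$. Your observation that the bound is deterministic (no expectation needed) is also consistent with how the paper uses it.
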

    \begin{proof}
    Following the proof in Lemma  \ref{lemma:errorQIAGnondist1} yields the result. 
    \end{proof}
    
    \begin{lemma}
    \label{lemma:errorsquareNSC2}
      The sequence $\{x_k \}$ generated by \eqref{eqn:QIAGnonDistr} satisfies 
     \begin{align*}
       \mathbf{E}\left\| \nabla f(x_k) - Q(g_k) \right\|^2 
         & \leq 2\left( 1 + \beta(1 + \theta)  \right)\mathbf{E}\left\| \nabla f(x_k) - g_k \right\|^2 \\
& \hspace{0.5cm}+ 2\beta(1 + 1/\theta)\mathbf{E}\| \nabla f(x_k)\|^2,
    \end{align*}
    where $\theta>0$. 
    \end{lemma}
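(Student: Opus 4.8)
The plan is to combine the variance property of the quantizer with two elementary vector inequalities, exactly mirroring how the companion bound in Lemma~\ref{lemma:errorsquareNSC1} is set up. First I would isolate the pure compression error. Since $Q$ is a URQ, the inequality $\mathbf{E}\Vert Q(v)-v\Vert^2 \leq \beta\Vert v\Vert^2$ (the displayed consequence of Definition~\ref{def:UnbiasedRandQuant}) applies with $v=g_k$, giving $\mathbf{E}\Vert g_k - Q(g_k)\Vert^2 \leq \beta\Vert g_k\Vert^2$. This is the only place the quantizer properties enter, so the rest of the argument is deterministic and then averaged.

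Next I would relate $\Vert g_k\Vert^2$ back to the two quantities that appear on the right-hand side of the claim, namely $\Vert\nabla f(x_k)-g_k\Vert^2$ and $\Vert\nabla f(x_k)\Vert^2$. Writing $g_k = (g_k-\nabla f(x_k)) + \nabla f(x_k)$ and applying the scalar-weighted triangle inequality $\Vert x+y\Vert^2 \leq (1+\theta)\Vert x\Vert^2 + (1+1/\theta)\Vert y\Vert^2$ (valid for any $\theta>0$) yields
\begin{align*}
\mathbf{E}\Vert g_k - Q(g_k)\Vert^2
&\leq \beta(1+\theta)\,\mathbf{E}\Vert g_k - \nabla f(x_k)\Vert^2 + \beta(1+1/\theta)\,\mathbf{E}\Vert \nabla f(x_k)\Vert^2.
\end{align*}
The free parameter $\theta$ is precisely what later lets the analysis of Theorem~\ref{thm:CompressedIAGNonstronglyConvex} balance the delay-induced term against the genuine gradient-norm term, so I would keep it symbolic throughout rather than optimizing it here.

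Finally I would split the target quantity by inserting $g_k$: using $\Vert a+b\Vert^2 \leq 2\Vert a\Vert^2 + 2\Vert b\Vert^2$ with $a=\nabla f(x_k)-g_k$ and $b=g_k-Q(g_k)$ gives $\Vert\nabla f(x_k)-Q(g_k)\Vert^2 \leq 2\Vert\nabla f(x_k)-g_k\Vert^2 + 2\Vert g_k-Q(g_k)\Vert^2$. Taking expectations and substituting the bound on $\mathbf{E}\Vert g_k-Q(g_k)\Vert^2$ from the previous step, then collecting the coefficient of $\mathbf{E}\Vert\nabla f(x_k)-g_k\Vert^2$ (which becomes $2+2\beta(1+\theta)=2(1+\beta(1+\theta))$) and of $\mathbf{E}\Vert\nabla f(x_k)\Vert^2$ (which is $2\beta(1+1/\theta)$), produces the stated inequality.

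There is no genuine obstacle in this lemma: every step is a standard inequality and the result is essentially bookkeeping. The only point requiring care is the ordering of the two splittings — the factor-$2$ split must be applied to $\nabla f(x_k)-Q(g_k)$ while the $\theta$-weighted split is applied to $g_k$ inside the compression-error term — since swapping them would produce different constants. I would therefore be careful to track exactly which decomposition feeds into which inequality so that the final coefficients match the claimed $2(1+\beta(1+\theta))$ and $2\beta(1+1/\theta)$.
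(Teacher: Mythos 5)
Your proposal is correct and follows essentially the same route as the paper's proof: bound $\mathbf{E}\left\| g_k - Q(g_k) \right\|^2 \leq \beta \| g_k \|^2$ via the URQ property, split $g_k = (g_k - \nabla f(x_k)) + \nabla f(x_k)$ with the $\theta$-weighted inequality, and then apply the factor-$2$ split to $\nabla f(x_k) - Q(g_k)$ before taking expectations. The only difference is cosmetic: the paper's proof also invokes $\nabla f(x^\star) = 0$, which is in fact never used, so your version is if anything slightly cleaner.
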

    \begin{proof}
    We start by deriving the upper bound of $\mathbf{E}\left\|  g_k - Q(g_k) \right\|^2$. By the property stating that $\mathbf{E}\| Q(v) - v \|^2\leq\beta\|v\|^2$ and by the fact that $\nabla f(x^\star)=0$, we have: 
    \begin{align*}
        \mathbf{E}\left\|  g_k - Q(g_k) \right\|^2 & \leq \beta \| g_k \|^2 \\  
        & \leq \beta(1+\theta)\| g_k - \nabla f(x_k)\|^2 + \beta(1+1/\theta)\| \nabla f(x_k)\|^2,
    \end{align*}
    where the last inequality derives from the fact that $\|x+y\|^2\leq(1+\theta)\|x\|^2 + (1+1/\theta)\|y\|^2$ for $x,y\in\mathbb{R}^d$ and $\theta>0$.
    
    Now, we are ready to derive the upper bound of $\mathbf{E}\left\| \nabla f(x_k) - Q(g_k) \right\|^2$. By the fact that $\| \sum_{i=1}^N x_i\|^2\leq N\sum_{i=1}^N \| x_i \|^2$ for $x_i\in\mathbb{R}^d$ and $N\in \mathbb{N}$, we have 
    \begin{align*}
        \left\| \nabla f(x_k) - Q(g_k) \right\|^2 
        & \leq 2 \left\| \nabla f(x_k) - g_k \right\|^2 + 2\left\|  g_k - Q(g_k) \right\|^2.
    \end{align*}
    Taking the expectation over the randomness and then plugging the upper bound of $\mathbf{E}\left\|  g_k - Q(g_k) \right\|^2$ into the result  yield the result.
    \end{proof}
    
    \begin{lemma}
\label{lemma:IAGConvegenceTrickResult}
Suppose that non-negative sequences $\{V_k\}, \{w_k\},$ and $\{\Theta_k\}$ satisfying the following inequality 
\begin{equation}
      V_{k+1} \leq V_k - a\Theta_k - b w_{k} + c \sum_{j=k-\tau}^k w_{j}, 
      \label{eqn:LemIAGDelayed}
\end{equation}
where $a,b,c >0$. Further suppose that $b-c(\tau + 1) \geq 0$ and $w_k =0 $  for $k < 0$. Then,  
\begin{align*}
     \frac{1}{K+1}\sum_{k=0}^{K}\Theta_k 
    & \leq \frac{1}{a}\frac{1}{K+1}( V_0 - V_{K+1} ).
\end{align*}
\end{lemma}
\begin{proof}
Summing \eqref{eqn:LemIAGDelayed} from $k=0$ to $k=K$ yields 
\begin{align*}
   \sum_{k=0}^{K} V_{k+1} & \leq \sum_{k=0}^K V_{k} - a\sum_{k=0}^K\Theta_k - b\sum_{k=0}^K w_k + c \sum_{k=0}^K \sum_{j=k-\tau}^k w_{j},
\end{align*}
or equivalently due to the telescopic series 
\begin{align*}
    a\sum_{k=0}^K\Theta_k & \leq  (V_0 - V_{K+1})- b\sum_{k=0}^K w_k + c \sum_{k=0}^K \sum_{j=k-\tau}^k w_{j} \\ 
    & =  (V_0 - V_{K+1})- b\sum_{k=0}^K w_k  \\ 
    & \hspace{0.5cm}  + c(w_{-\tau} + w_{-\tau+1}+\ldots + w_{0}) \\ 
    &  \hspace{0.5cm}   + c(w_{-\tau+1} + w_{-\tau+2}+\ldots + w_{0}+w_1) + \ldots \\ 
    &  \hspace{0.5cm}   + c(w_{-\tau+K} + w_{-\tau+K+1}+\ldots + w_0 + w_1 +\ldots + w_K) \\
    &  \leq (V_0 - V_{K+1})- b\sum_{k=0}^K w_k + c(\tau+1)\sum_{k=0}^K w_k \\
    & \leq V_0 - V_{K+1}, 
\end{align*}
where the second inequality comes from the fact that $w_k \geq 0$ for $k \geq 0.$ In addition, the last inequality follows from the assumption that $b-c(\tau+1) \geq 0$. Then, we obtain the result.
\end{proof}
    
    Now, we are ready to derive the convergence rate. From the definition of the Lipschitz continuity of the gradient of the function $f$, we have 
    \begin{align*}
        f(x_{k+1}) - f^\star& \leq f(x_{k}) - f^\star -\gamma \left \langle \nabla f(x_k), Q(g_k) \right\rangle + \frac{\gamma^2 \bar L}{2}\left\| Q(g_k) \right\|^2 \\
        & \leq f(x_k) - f^\star - \frac{\gamma}{2}\left\| \nabla f(x_k) \right\|^2 - \left( \frac{\gamma}{2} - \frac{\gamma^2\bar L}{2} \right)\left\| Q(g_k) \right\|^2 \\
        &\hspace{0.4cm} + \frac{\gamma}{2}\left\| \nabla f(x_k) - Q(g_k) \right\|^2,
    \end{align*}
    where $\bar L =  L\sqrt{m(1+\Delta)}$ and $\Delta = {\min}(\Delta_{\rm ave},\Delta_{\max})$. The last inequality derives from the fact that $2\langle x,y \rangle = \|x\|^2+\|y\|^2-\|x-y\|^2$ for any $x,y\in\mathbb{R}^d$. Denote  $V_k = \mathbf{E}f(x_k) - f^\star, \Theta_k = \mathbf{E}\|\nabla f(x_k)\|^2$, and $w_k = \mathbf{E}\|Q(g_k)\|^2$. Next, taking the expectation over the randomness, and then plugging the inequality from Lemma  \ref{lemma:errorsquareNSC1} and \ref{lemma:errorsquareNSC2} yield
    \begin{align*}
   V_{k+1}
        & \leq  V_k - \alpha_1 \Theta_k - \alpha_2 w_k + \alpha_3 \sum_{j = k-\tau}^{k-1} w_j, \intertext{where} 
        \alpha_1 & = \gamma/2 - \gamma\beta(1+1/\theta) \\ 
        \alpha_2 & =\gamma/2-\bar L\gamma^2/2\\
        \alpha_3 & = \gamma\left(1 + \beta(1+\theta) \right)\bar L^2 \gamma^2 \tau,
    \end{align*}
    and $\bar L =  L\sqrt{m(1+\Delta)}$ and $\Delta = {\min}(\Delta_{\rm ave},\Delta_{\max})$. Next, we apply Lemma \ref{lemma:IAGConvegenceTrickResult}. Notice that $\|Q(g_k)\|=\|x_{k+1}-x_k\|/\gamma$, which implies that $w_k=0$ if $k<0$. Therefore, 
    \begin{align*}
    \frac{1}{K+1} \sum_{k=0}^{K}\Theta_k 
    & \leq \frac{1}{a}\frac{1}{K+1}( V_0 - V_{K+1} ),
\end{align*}
which means that
  \begin{align*}
      \mathop{\min}\limits_{k\in[0,K]} \mathbf{E}\| \nabla f(x_k) \|^2 \leq \frac{1}{a}\frac{1}{K+1}\left( f(x_0) - f^\star\right) - \frac{1}{a}\frac{1}{K+1}\left( \mathbf{E} f(x_k) - f^\star\right) .
    \end{align*}
To ensure the validity of the result, we must determine $\gamma$ and $\beta$ to satisfy three conditions, \emph{i.e.} $a>0$, $b>0$ and $b-c(\tau+1)\geq 0$. The first criterion implies that $\beta < 1/\left(2(1+1/\theta)\right)$, and the last two criteria yield the admissible range of the step size $\gamma$. The second criterion implies that $\gamma<1/\bar L$, and the equivalence of the last criterion is 
\begin{align*}
    \frac{1}{2} - \frac{\bar L\gamma}{2} - \left(1 + \beta(1+\theta) \right)\bar L^2  \tau(\tau+1)\gamma^2 \geq 0. 
\end{align*}
Therefore, let $\gamma = \frac{1}{\bar L(1+\omega)}$ where $\omega > 0$, and plugging the expression into the inequality yields 
\begin{align*}
    \omega^2 + \omega - 2\psi \geq 0,
\end{align*}
where $\psi = \left(1 + \beta(1+\theta) \right)\tau(\tau+1).$ Therefore, $\omega \geq \left(-1 +\sqrt{1+8\psi} \right)/2$, and 
\begin{align*}
    \gamma < \frac{1}{\sqrt{1+8\left( 1 + \beta(1+\theta) \right)\tau(\tau+1)}}\frac{2}{\bar L}. 
\end{align*}

    %
    %
    \section{Proof of Theorem \ref{thm:DistributedQGDSC}} \label{app:thm:DistributedQGDSC}
    Since the component functions are convex and have $L$-Lipschitz continuous gradients,     \begin{equation}
    \|\nabla f_i(x) -\nabla f_i(y)\|^2 \leq L \langle 
    \nabla f_i(x)- \nabla f_i(y),x-y\rangle \qquad \forall x,y\in \mathbb{R}^d.
    \label{eqn:cocoercivity}
    \end{equation}
    By Young's inequality,
    \begin{equation}
    \begin{array}{rl}
    \|\nabla f_i(x_k)\|^2 &\leq (1+\theta)\| \nabla f_i(x_k) - \nabla f_i(x^\star) \|^2 + (1+1/\theta)\|\nabla f_i(x^\star)\|^2 \\
    & \leq (1+\theta)L\langle \nabla f_i(x_k) - \nabla f_i(x^\star), x_k - x^\star \rangle + (1+1/\theta)\|\nabla f_i(x^\star)\|^2
    \end{array}    
    \label{eqn:trick}
    \end{equation}
    
    We use the distance between the iterates $\{x_k\}_{k\in \mathbb{N}}$  and the optimum $x^\star$ to analyze the convergence: 
    \begin{align*}
    \|x_{k+1} - x^\star \|^2  
    &= \| x_k - x^\star \|^2 - 2\gamma_k \left\langle {\sum\limits_{i = 1}^m Q (\nabla {f_i}({x_k})),{x_k} - {x^ \star }} \right\rangle \\
    &\hspace{0.4cm}+ \gamma_k^2\left\|  \sum_{i=1}^m Q(\nabla f_i(x_k)) \right\|^2 \\ 
    & \leq \| x_k - x^\star \|^2 - 2\gamma_k \left\langle {\sum\limits_{i = 1}^m Q (\nabla {f_i}({x_k})),{x_k} - {x^ \star }} \right\rangle  \\
    &\hspace{0.4cm}+ \gamma_k^2 \sigma_k \sum_{i=1}^m \left\|   Q(\nabla f_i(x_k)) \right\|^2,
    \end{align*}
    where the second inequality comes from Lemma \ref{lemma:datasparsityTrick}. Notice that $\mathbf{E}\left\|   Q(\nabla f_i(x_k)) \right\|^2 \leq \alpha \mathbf{E}\| \nabla f_i(x_k) \|^2$, since all machines have the same quantizers with the same parameters. Therefore, taking the expectation over all random variables yields 
    \begin{align*}
    \mathbf{E}\|x_{k+1} - x^\star \|^2 
    & \leq \mathbf{E}\| x_k - x^\star \|^2 - 2\gamma_k \mathbf{E}\left\langle \nabla f(x_k),{x_k} - {x^ \star } \right\rangle \\
    &\hspace{0.4cm}+ \gamma_k^2 \sigma_k  \alpha \sum_{i=1}^m  \mathbf{E}\left\|   \nabla f_i(x_k) \right\|^2 \\ 
    & \leq \mathbf{E}\| x_k - x^\star \|^2 - 2\gamma_k \mathbf{E}\left\langle \nabla f(x_k) - \nabla f(x^\star),{x_k} - {x^ \star } \right\rangle  \\
    & \hspace{0.4cm} + \gamma_k^2 \sigma_k  \alpha L (1+\theta)\mathbf{E}\langle \nabla f(x_k) - \nabla f(x^\star), x_k -x^\star\rangle  \\
    & \hspace{0.4cm} + \gamma_k^2 \sigma_k  \alpha (1+1/\theta)\sum_{i=1}^m\mathbf{E} \|\nabla f_i (x^\star)\|^2,
    \end{align*}
    where  the last inequality comes from \eqref{eqn:trick}, $\nabla f(x) = \sum_{i=1}^m \nabla f_i(x)$, and $\nabla f(x^\star) = 0$. Now, let $\gamma_k = 1/\left(L\alpha (1+\theta) \sigma_k\right).$ Then, by strong convexity of $f$, we have: 
    \begin{align*}
    \mathbf{E}\|x_{k+1} - x^\star \|^2 &  \leq \mathbf{E}\| x_k - x^\star \|^2 - \gamma_k \mathbf{E}\left\langle \nabla f(x_k) -\nabla f(x^\star),{x_k} - {x^ \star } \right\rangle  \\
    &\hspace{0.4cm}+ \gamma_k \frac{1}{\theta L} \sum_{i=1}^m \mathbf{E}\left\|   \nabla f_i(x^\star) \right\|^2 \\ 
    & \leq \rho_k \mathbf{E}\| x_k - x^\star \|^2 + \gamma_k \frac{1}{\theta L} \sum_{i=1}^m \mathbf{E}\left\|   \nabla f_i(x^\star) \right\|^2,
    \end{align*}
    where $\rho_k = 1-\mu\gamma_k$. 
    Define $\rho_{\max} \in (0,1)$ and $\gamma_{\max}$ such that $\rho_k \leq \rho_{\max}$ and $\gamma_k \leq  \gamma_{\max} , \forall k$. Then,
    \begin{align*}
    \Vert x_{k+1}-x^{\star}\Vert^2 &\leq \rho_{\max} \Vert x_k-x^{\star}\Vert^2 + e_k
    \end{align*}
    where
    \begin{align*}
    e_k &= \gamma_{\max} \frac{1}{\theta L} \sum_{i=1}^m \left\|   \nabla f_i(x^\star) \right\|^2.
    \end{align*}
    Consequently, $V_k \leq \rho_{\max}^k V_0 + \bar e$ where 
    $\bar e = e/(1-\rho_{\max})$. 
    
    If, instead, we use $\gamma_k = \gamma=1/(L\alpha(1+\theta)\sigma)$, then a similar argument yields that 
    \begin{align*}
    \mathbf{E}\Vert x_{k}-x^{\star}\Vert^2 &\leq (1-\mu\gamma)^k \Vert x_0-x^{\star}\Vert^2 + 
    \frac{1}{\mu \theta L }\sum_{i=1}^m \left\|   \nabla f_i(x^\star) \right\|^2.
    \end{align*}

    %
    %
    \section{Proof of Theorem \ref{thm:DistributedQGDConvex}} \label{app:thm:DistributedQGDConvex}
    Following the proof in Theorem \ref{thm:DistributedQGDSC}, we reach: 
    
    \begin{align*}
        \mathbf{E}\|x_{k+1} - x^\star \|^2  & \leq \mathbf{E}\| x_k - x^\star \|^2 - 2\gamma \mathbf{E}\left\langle \nabla f(x_k) - \nabla f(x^\star),{x_k} - {x^ \star } \right\rangle  \\
        & \hspace{0.4cm} + \gamma^2 \sigma  \alpha L (1+\theta)\mathbf{E}\langle \nabla f(x_k) - \nabla f(x^\star), x_k -x^\star\rangle  \\
        & \hspace{0.4cm} + \gamma^2 \sigma  \alpha (1+1/\theta)\sum_{i=1}^m\mathbf{E} \|\nabla f_i (x^\star)\|^2.
    \end{align*}
     Now, let $\gamma = 1/\left(L\alpha (1+\theta) \sigma\right).$ Then, we have: 
    \begin{align*}
        \mathbf{E}\|x_{k+1} - x^\star \|^2 &  \leq \mathbf{E}\| x_k - x^\star \|^2 - \gamma \mathbf{E}\left\langle \nabla f(x_k) -\nabla f(x^\star),{x_k} - {x^ \star } \right\rangle \\
        &\hspace{0.4cm}+ \gamma \frac{1}{\theta L} \sum_{i=1}^m \mathbf{E}\left\|   \nabla f_i(x^\star) \right\|^2 \\ 
        & \leq \mathbf{E}\| x_k - x^\star \|^2 - \gamma \mathbf{E}(f(x_k) - f(x^\star)) + \gamma \frac{1}{\theta L} \sum_{i=1}^m \mathbf{E}\left\|   \nabla f_i(x^\star) \right\|^2,
    \end{align*}
    where the second inequality derives from the convexity of $f$, \emph{i.e.} $\left\langle \nabla f(x_k),{x_k} - {x^ \star } \right\rangle \geq f(x_k) - f(x^\star)$. Denote $\bar x_T = \frac{1}{T}\sum_{k=0}^{T-1} x_k$. Due to the convexity of the objective function $f$, $f(\bar x_T) \leq \frac{1}{T}\sum_{k=0}^{T-1} f(x_k)$. 
    By the manipulation, we have 
    \begin{align*}
       \mathbf{E}(f(x_T) - f(x^\star)) & \leq \frac{1}{T}\sum_{k=0}^{T-1} \mathbf{E}(f(x_k) - f(x^\star)) \\ 
       & \leq \frac{1}{T}\sum_{k=0}^{T-1} \frac{1}{\gamma} \left( \mathbf{E}\| x_k - x^\star \|^2 - \mathbf{E}\| x_{k+1} - x^\star \|^2 \right) \\
&\hspace{0.5cm}+ \frac{1}{\theta L}\sum_{i=1}^m\mathbf{E} \|\nabla f_i (x^\star)\|^2 \\ 
       & \leq \frac{1}{\gamma}\frac{1}{T}\|x_0 - x^\star \|^2 + \frac{1}{\theta L}\sum_{i=1}^m\mathbf{E} \|\nabla f_i (x^\star)\|^2,
    \end{align*}
    where we reach the last inequality by the telescopic series and by the non-negativity of the Euclidean norm.

    %
    %
    
    \section{Proof of Theorem \ref{thm:QIAGDistSC}} \label{app:thm:QIAGDistSC}
    Denote $g_k = \sum_{i=1}^m \nabla f_i(x_{k-\tau_k^i})$. Before deriving the convergence rate, we introduce an essential lemma for our main analysis. 
    
    \begin{lemma}
    \label{lemma:trickerror_IAG}
  Let $\bar L =L\sqrt{m(1+\Delta)}$, and $\Delta = {\min}(\Delta_{\rm ave},\Delta_{\max})$. Consider the IAG update \eqref{eqn:QIAG} with the URQ according to Definition \ref{def:UnbiasedRandQuant}. Then,  
    \begin{align*}
        \mathbf{E}\left\| e_k \right\|^2 
        &\leq  2m\sigma\alpha  L^2 \gamma^2 \bar L^2\tau^2 \mathop{\max}\limits_{s \in [k-2\tau,k]} \left\|  x_{s} - x^\star \right\|^2  +  2m\alpha \gamma^2 \bar L^2\tau^2 \sum_{i=1}^m \left\|  \nabla f_i(x^\star) \right\|^2  
    \end{align*}
    where $  \sigma = \min\left( \sqrt{m(1+\Delta_{\rm ave})} , 1+ \Delta_{\max} \right) $, and $g_k = \sum_{i=1}^m \nabla f_i(x_{k-\tau_k^i})$.
    \end{lemma}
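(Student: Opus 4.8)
The quantity $e_k$ is the gradient staleness error, $e_k = \nabla f(x_k) - g_k = \sum_{i=1}^m\bigl(\nabla f_i(x_k) - \nabla f_i(x_{k-\tau_k^i})\bigr)$, and the plan is to bound its second moment by first propagating the delay back through the iterates and then invoking both the data-sparsity structure and the variance bound of the URQ. Throughout I write $\tilde g_j = \sum_{i=1}^m Q(\nabla f_i(x_{j-\tau_j^i}))$ for the compressed update direction, so that $x_{j+1}-x_j = -\gamma\tilde g_j$.

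First I would reproduce the staleness estimate of Lemma~\ref{lemma:errorQIAGnondist1}: applying the sparsified Lipschitz argument of Lemma~\ref{lemma:LipschitzWholeFcn} with $x = x_k$ and $y = x_{k-\tau_k^i}$ gives $\|e_k\|^2 \le \bar L^2 \max_{i}\|x_k - x_{k-\tau_k^i}\|^2$. Writing $x_k - x_{k-\tau_k^i}$ as a telescoping sum of the increments $x_{j+1}-x_j = -\gamma\tilde g_j$, and then using the bounded-delay assumption $\tau_k^i\le\tau$ together with $\|\sum_{j}a_j\|^2\le \tau\sum_j\|a_j\|^2$, yields $\|e_k\|^2 \le \bar L^2\gamma^2\tau\sum_{j=k-\tau}^{k-1}\|\tilde g_j\|^2$. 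Taking expectations preserves this inequality.

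The second ingredient is a bound on $\mathbf{E}\|\tilde g_j\|^2$. Since Assumption~\ref{assum:SparsityEmpiricalLoss} guarantees $\mathrm{supp}(Q(\nabla f_i(x_{j-\tau_j^i}))) = \mathrm{supp}(Q(a_i))$, Lemma~\ref{lemma:datasparsityTrick} applies verbatim and gives $\|\tilde g_j\|^2\le \sigma\sum_{i=1}^m\|Q(\nabla f_i(x_{j-\tau_j^i}))\|^2$; taking expectations and using property~3 of Definition~\ref{def:UnbiasedRandQuant} replaces each quantized term by $\alpha\,\mathbf{E}\|\nabla f_i(x_{j-\tau_j^i})\|^2$. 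I would then split each component gradient around the optimum via $\|\nabla f_i(x_{j-\tau_j^i})\|^2\le 2L^2\|x_{j-\tau_j^i}-x^\star\|^2 + 2\|\nabla f_i(x^\star)\|^2$, combining the $L$-Lipschitz bound with $\|a+b\|^2\le 2\|a\|^2+2\|b\|^2$. Summing over $i$ and using $\sum_i\|x_{j-\tau_j^i}-x^\star\|^2\le m\max_{s\in[j-\tau,j]}\|x_s-x^\star\|^2$ produces $\mathbf{E}\|\tilde g_j\|^2\le 2m\sigma\alpha L^2\max_{s\in[j-\tau,j]}\mathbf{E}\|x_s-x^\star\|^2 + 2\sigma\alpha\sum_{i=1}^m\|\nabla f_i(x^\star)\|^2$.

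Finally I would substitute this into the staleness estimate. The outer sum runs over the $\tau$ indices $j\in[k-\tau,k-1]$, the nested delay windows $[j-\tau,j]$ all lie inside $[k-2\tau,k]$, and the prefactor $\bar L^2\gamma^2\tau$ combines with the extra $\tau$ from the summation to give the claimed $\gamma^2\bar L^2\tau^2$ on both terms; this already matches the first term exactly. For the constant term one bounds $\sigma\le m$ (valid since $\sigma=\min(\sqrt{m(1+\Delta_{\rm ave})},1+\Delta_{\max})\le m$) to recover the stated $2m\alpha$ coefficient. The part requiring care is the bookkeeping of the two nested staleness windows — the delay in $g_k$ itself and the delay inside each $\tilde g_j$ — which is exactly what pushes the distance maximum out to $[k-2\tau,k]$; everything else is a routine chain of Cauchy–Schwarz, the $2\|a\|^2+2\|b\|^2$ splitting, and the URQ variance inequality.
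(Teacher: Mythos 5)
Your proposal is correct and follows essentially the same route as the paper's own proof: the Lipschitz-plus-sparsity staleness bound $\|e_k\|^2 \le \bar L^2\gamma^2\tau\sum_{j=k-\tau}^{k-1}\|\tilde g_j\|^2$, then Lemma~\ref{lemma:datasparsityTrick} with the URQ variance bound, the split around $x^\star$, and the bounded-delay maximum over $[k-2\tau,k]$. Even the final relaxation $\sigma\le m$ in the constant term (justified by $1+\Delta_{\max}\le m$) is exactly what the paper does implicitly when its $2\sigma\alpha$ coefficient becomes $2m\alpha$.
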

    \begin{proof}
    Denote $g_k = \sum_{i=1}^m \nabla f_i(x_{k-\tau_k^i})$ and $e_k = \nabla f(x_k) - g_k$.  Following the proof of Lemma \ref{lemma:LipschitzWholeFcn} with $x = x_k$ and $y=x_{k-\tau_k^i}$ yields 
    \begin{align*}
          \left\|  e_k \right\|^2 
        &  \leq \bar L^2 \mathop{\max}\limits_{i\in[1,m]} \|   x_k - x_{k-\tau_k^i} \|^2,
    \end{align*}
     where $\bar L =L\sqrt{m(1+\Delta)}$, and $\Delta = {\min}(\Delta_{\rm ave},\Delta_{\max})$. Next, notice that 
    \begin{align*}
        \left\|  e_k \right\|^2 
        & \leq \bar L^2 \mathop{\max}\limits_{i\in[1,m]}\tau_k^i \sum_{j = k-\tau_k^i}^{k-1}\left\| x_{j+1} - x_{j} \right\|^2 \\
        & \leq \bar L^2 \tau \sum_{j = k-\tau}^{k-1}\left\| x_{j+1} - x_{j} \right\|^2 \\ 
        & \leq \gamma^2 \bar L^2 \tau  \sum_{j = k-\tau}^{k-1}\left\| \sum_{i=1}^m Q(\nabla f_i(x_{j-\tau^i_j})) \right\|^2, 
    \end{align*}
    where the second inequality follows from the bounded delay assumption, and the last inequality from \eqref{eqn:QIAG}. On the other hand,
    \begin{align*}
        \mathbf{E}\left\| \sum_{i=1}^m Q(\nabla f_i(x_{j-\tau^i_j})) \right\|^2
        & \leq  \sigma \sum_{i=1}^m \mathbf{E}\left\| Q(\nabla f_i(x_{j-\tau^i_j})) \right\|^2  \\ 
        & \leq  \sigma \alpha\sum_{i=1}^m \left\| \nabla f_i(x_{j-\tau^i_j}) \right\|^2\\
        &  \leq 2\sigma \alpha \sum_{i=1}^m \left\|  \nabla f_i(x_{j-\tau^i_j}) - \nabla f_i(x^\star) \right\|^2 +   2\sigma\alpha \sum_{i=1}^m \left\|  \nabla f_i(x^\star) \right\|^2\\ 
        & \leq 2\sigma\alpha\sum_{i=1}^m L^2 \left\|  x_{j-\tau^i_j} - x^\star \right\| ^2+   2\sigma \alpha \sum_{i=1}^m \left\|  \nabla f_i(x^\star) \right\|^2 \\ 
        & \leq  2m\sigma\alpha  L^2  \mathop{\max}\limits_{s \in [j-\tau,j]} \left\|  x_{s} - x^\star \right\|^2 +   2m\alpha\sum_{i=1}^m \left\|  \nabla f_i(x^\star) \right\|^2,
    \end{align*}
    where we reach the first inequality by Lemma \ref{lemma:datasparsityTrick} due to Assumption \ref{assum:SparsityEmpiricalLoss}; the second inequality by the second property of Definition \ref{def:UnbiasedRandQuant}; the third inequality by $\|a+b\|^2\leq 2\|a\|^2+2\|b\|^2$; the forth inequality by the Lipschitz continuity assumption for gradient of each $f_i$; and the last inequality by the bounded delay assumption. Hence, plugging this result into the upper bound of $e_k$ yields the result. 
    \end{proof}
    We now prove Theorem \ref{thm:QIAGDistSC}. From \eqref{eqn:QIAG}, we have
    \begin{align*}
        \left\| x_{k+1} - x^\star \right\|^2 & = \left\| x_k - x^\star \right\|^2 - 2\gamma\left\langle  \sum_{i=1}^m Q\left( \nabla f_i (x_{k-\tau_k^i}) \right), x_k -x^\star \right\rangle\\
        &\hspace{0.4cm}+ \gamma^2 \left\| \sum_{i=1}^m Q\left( \nabla f_i (x_{k-\tau_k^i}) \right)  \right\|^2.
    \end{align*}
    Taking the expectation over all the random variables yields 
    \begin{align*}
          \mathbf{E}\left\| x_{k+1} - x^\star \right\|^2 & = \mathbf{E}\left\| x_k - x^\star \right\|^2 - 2\gamma\mathbf{E}\left\langle  \sum_{i=1}^m  \nabla f_i (x_{k-\tau_k^i}) , x_k -x^\star \right\rangle \\
          &\hspace{0.4cm}+ \gamma^2 \mathbf{E}\left\| \sum_{i=1}^m Q\left( \nabla f_i (x_{k-\tau_k^i}) \right)  \right\|^2.
    \end{align*}
    Using the second property in Definition \ref{def:UnbiasedRandQuant} and Lemma \ref{lemma:datasparsityTrick} due to Assumption \ref{assum:SparsityEmpiricalLoss}, we get 
    \begin{align*}
           \mathbf{E}\left\| x_{k+1} - x^\star \right\|^2 
           & \leq  \mathbf{E}\left\| x_k - x^\star \right\|^2  - 2\gamma \mathbf{E}\left\langle  \nabla f(x_k) , x_k -x^\star \right\rangle \\ 
           & \hspace{0.4cm} +  2\gamma \mathbf{E}\left\langle  g_k - \nabla f(x_k) , x_k -x^\star \right\rangle \\
           &\hspace{0.4cm} + \gamma^2 \sigma \alpha \sum_{i=1}^m  \mathbf{E} \left\|   \nabla f_i (x_{k-\tau_k^i})   \right\|^2 \\ 
           & \leq \mathbf{E}\left\| x_k - x^\star \right\|^2  - 2\gamma \mathbf{E}\left\langle  \nabla f(x_k) , x_k -x^\star \right\rangle \\ 
           & \hspace{0.4cm} +  \mathbf{E}\left\| g_k - \nabla f(x_k) \right\|^2+ \gamma^2\mathbf{E} \left\| x_k -x^\star \right\|^2  \\
           &\hspace{0.4cm} + (1+\theta)\gamma^2 \sigma \alpha \sum_{i=1}^m  \mathbf{E} \left\|   \nabla f_i (x_{k-\tau_k^i}) -    \nabla f_i (x^\star) \right\|^2 \\
           & \hspace{0.4cm} + (1+1/\theta)\gamma^2 \sigma  \alpha \sum_{i=1}^m  \mathbf{E} \left\|     \nabla f_i (x^\star) \right\|^2 
    \end{align*}
    where $\sigma = \min\left( \sqrt{m(1+\Delta_{\rm ave})},1+\Delta_{\max} \right)$ $g_k = \sum_{i=1}^m \nabla f_i(x_{k-\tau_k^i})$. The second inequality follows from Cauchy-Schwarz's inequality and from the fact that $\| x + y \|^2\leq (1+\theta)\| x\|^2  + (1+1/\theta)\|y \|^2$ for $x,y\in\mathbb{R}^d$ and $\theta>0$.  Due to the Lipschitz continuity assumption of $\nabla f_i$, we get    
    \begin{align*}
      \mathbf{E}\left\| x_{k+1} - x^\star \right\|^2      & \leq (1+\gamma^2)\mathbf{E}\left\| x_k - x^\star \right\|^2  - 2\gamma \mathbf{E}\left\langle  \nabla f(x_k) , x_k -x^\star \right\rangle \\ 
           & \hspace{0.4cm} +  \mathbf{E}\left\| g_k - \nabla f(x_k) \right\|^2 \ \\
           &\hspace{0.4cm} + (1+\theta)\gamma^2 m\alpha \sigma  L^2 \mathbf{E} \left\|  x_{k-\tau_k^i} -    x^\star \right\|^2 \\
           & \hspace{0.4cm} + (1+1/\theta)\gamma^2 \sigma  \alpha \sum_{i=1}^m   \mathbf{E} \left\|     \nabla f_i (x^\star) \right\|^2.
    \end{align*}
 It follows from Lemma \ref{lemma:trickerror_IAG} that 
    \begin{align*}
           \mathbf{E}\left\| x_{k+1} - x^\star \right\|^2 
           & \leq (1+\gamma^2)\mathbf{E}\left\| x_k - x^\star \right\|^2  - 2\gamma \mathbf{E}\left\langle  \nabla f(x_k) , x_k -x^\star \right\rangle \\ 
        &\hspace{0.4cm} +  2m\sigma\alpha  L^2 \gamma^2 \bar L^2\tau^2 \mathop{\max}\limits_{s \in [k-2\tau,k]} \left\|  x_{s} - x^\star \right\|^2  \\
       & \hspace{0.4cm}+  2m\alpha \gamma^2 \bar L^2\tau^2 \sum_{i=1}^m \left\|  \nabla f_i(x^\star) \right\|^2\\
                      &\hspace{0.4cm} + (1+\theta)\gamma^2 m\alpha\sigma L^2 \mathop{\max}\limits_{s\in[k-\tau,k]} \mathbf{E} \left\|  x_{s} -    x^\star \right\|^2 \\
           & \hspace{0.4cm} + (1+1/\theta)\gamma^2 \sigma  \alpha \sum_{i=1}^m   \mathbf{E} \left\|     \nabla f_i (x^\star) \right\|^2. 
    \end{align*}
   Due to the property of the strong convexity assumption of $f$, it holds for $x,y\in\mathbb{R}^d$ that
    \begin{align*}
        \langle \nabla f(x) - \nabla f(y), x - y \rangle \geq \mu \| x -y \|^2,  
    \end{align*}
    Using this inequality with $x= x_k, y = x^\star$ and notice that $\nabla f(x^\star) = 0$ yields
    \begin{align*}
        V_{k+1} & \leq p V_k + q \mathop{\max}\limits_{s \in[k-2\tau,k]} V_s + e, \\ \intertext{where}
        V_k & = \mathbf{E}\|x_k - x^\star \|^2 \\ 
        p & = 1 - 2\mu\gamma + \gamma^2   \\ 
        q & = 2m\sigma\alpha  L^2 \gamma^2 \bar L^2\tau^2 +  (1+\theta)\gamma^2 m\alpha\sigma L^2\\ 
        e & = \left(2m\alpha \gamma^2 \bar L^2\tau^2 + (1+1/\theta)\gamma^2 \sigma  \alpha\right) \sum_{i=1}^m   \mathbf{E} \left\|     \nabla f_i (x^\star) \right\|^2.
    \end{align*}
 From Lemma 1 of \cite{aytekin2014asynchronous}, $p+q < 1$ implies that 
    \begin{align*}
        \gamma < \frac{2\mu}{1 + m\sigma\alpha L^2\left( 2\bar L^2\tau^2 + (1+\theta)  \right)}.
    \end{align*}
    Then, this implies that $V_k \leq (p+q)^{k/(1+2\tau)} V_0 + e/(1-p-q).$

    \section{Proof of Theorem \ref{thm:DistIAGHamidNSC}}\label{app:thm:DistIAGHamidNSC}

     Denote $\tilde g_k = \sum_{i=1}^m Q\left( \nabla f_i(x_{k-\tau_k^i})\right)$ and $g_k = \sum_{i=1}^m \nabla f_i(x_{k-\tau_k^i})$. Before deriving the convergence rate, we introduce the lemmas which are instrumental in our main analysis.
    \begin{lemma}
    \label{lemma:QIAGNSC1}
    The sequence $\{x_k \}$ generated by \eqref{eqn:QIAG} satisfies 
    \begin{align*}
       \|  g_k - \nabla f(x_k)\|^2 \leq \bar L^2 \gamma^2\tau \sum_{j=k-\tau}^{k-1} \| \tilde g_j \|^2. 
    \end{align*}
    \end{lemma}
    \begin{proof}
    Following the proof in Lemma  \ref{lemma:errorQIAGnondist1} yields the result. 
    \end{proof}
    
    \begin{lemma}
    \label{lemma:QIAGNSC1.1}
     The sequence $\{x_k \}$ generated by \eqref{eqn:QIAG} under Assumption \ref{assum:SparsityEmpiricalLoss} satisfies 
     \begin{align*}
         \left\| g_k - \tilde g_k \right\|^2 \leq \sigma \sum_{i=1}^m \left\| \nabla f_i(x_{k-\tau_k^i}) - Q\left( \nabla f_i(x_{k-\tau_k^i}) \right) \right\|^2,
     \end{align*}
     where $\sigma = {\min}\left( \sqrt{m(1+\Delta_{\rm ave})},1+\Delta_{\max} \right).$
    \end{lemma}
    \begin{proof}
    The proof arguments follow those in Lemma \ref{lemma:datasparsityTrick} with replacing $Q(\nabla f_i(x_k))$ with $\nabla f_i(x_{k-\tau_k^i}) - Q\left( \nabla f_i(x_{k-\tau_k^i})\right)$. Also, note that \[{\rm supp}\left( \nabla f_i(x_{k-\tau_k^i}) - Q\left( \nabla f_i(x_{k-\tau_k^i})\right) \right)\subset {\rm supp}(\nabla f_i(x_{k-\tau_k^i})),\] and Assumption \ref{assum:SparsityEmpiricalLoss} implies that ${\rm supp}(\nabla f_i(x_{k-\tau_k^i}))$ can be computed from the data directly. 
    \end{proof}

    \begin{lemma}
    \label{lemma:QIAGNSC2}
    The sequence $\{x_k \}$ generated by \eqref{eqn:QIAG} under Assumption \ref{assum:SparsityEmpiricalLoss} and  \ref{assum:BoundedGradient} satisfies 
    \begin{align*}
        \mathbf{E} \| \tilde g_k - \nabla f(x_k)\|^2 \leq   2 \bar L^2 \gamma^2\tau \sum_{j=k-\tau}^{k-1} \mathbf{E} \| \tilde g_j \|^2 + 2 \beta\sigma m C^2.
    \end{align*}
    \end{lemma}
    \begin{proof}
    By the fact that $\|x+y\|^2\leq 2\|x\|^2+2\|y\|^2$, we have: 
    \begin{align*} 
    \| \tilde g_k - \nabla f(x_k)\|^2 
    & \leq  2  \|  g_k - \nabla f(x_k)\|^2 + 2  \|  g_k - \tilde g_k\|^2 \\ 
    & \leq 2 \bar L^2 \gamma^2\tau \sum_{j=k-\tau}^{k-1} \| \tilde g_j \|^2 + 2  \|  g_k - \tilde g_k\|^2,
    \end{align*}
    where the last inequality follows from Lemma \ref{lemma:QIAGNSC1}. Next, taking the expectation of the inequality from Lemma  \ref{lemma:QIAGNSC1.1} over the randomness yields 
    \begin{align*}
         \mathbf{E}\|  g_k - \tilde g_k\|^2  & \leq \sigma  \sum_{i=1}^m  \mathbf{E}\left\|  \nabla f_i(x_{k-\tau_k^i}) - Q\left( \nabla f_i(x_{k-\tau_k^i}) \right) \right\|^2 \\ 
                                                              & \leq  \beta\sigma  \sum_{i=1}^m  \mathbf{E}\left\|  \nabla f_i(x_{k-\tau_k^i})  \right\|^2,
    \end{align*}
where we reach the last inequality by the second property of the URQ, \emph{i.e.} $\mathbf{E}\|Q(v)-v\|^2\leq \beta\mathbf{E}\|v\|^2$. Next, taking the expectation over the randomness yields 
    \begin{align*}
         \mathbf{E}\| \tilde g_k - \nabla f(x_k)\|^2 
    & \leq 2 \bar L^2 \gamma^2\tau \sum_{j=k-\tau}^{k-1} \mathbf{E} \| \tilde g_j \|^2 + 2  \mathbf{E}\|  g_k - \tilde g_k\|^2 \\ 
    & \leq  2 \bar L^2 \gamma^2\tau \sum_{j=k-\tau}^{k-1} \mathbf{E} \| \tilde g_j \|^2 + 2 \beta\sigma   \sum_{i=1}^m  \mathbf{E}\left\|  \nabla f_i(x_{k-\tau_k^i})  \right\|^2 \\ 
    & \leq 2 \bar L^2 \gamma^2\tau \sum_{j=k-\tau}^{k-1} \mathbf{E} \| \tilde g_j \|^2 + 2 \beta\sigma m C^2,
    \end{align*}
    where the last inequality results from Assumption \ref{assum:BoundedGradient}. 
    \end{proof}
    
    \begin{lemma}
    \label{lemma:trickMAinQIAGNSCWithErr}
     Assume that non-negative sequences $\{V_k\}, \{w_k\},$ and $\{\Theta_k\}$ satisfying the following inequality 
\begin{equation}
      V_{k+1} \leq V_k - a\Theta_k - b w_{k} + c \sum_{j=k-\tau}^k w_{j} + e, 
\end{equation}
where $a,b,c,e >0$. Further suppose that $b-c(\tau + 1) \geq 0$ and $w_k =0 $  for $k < 0$. Then,  
\begin{align*}
     \frac{1}{K+1}\sum_{k=0}^{K}\Theta_k 
    & \leq \frac{1}{a}\frac{1}{K+1}( V_0 - V_{K+1} ) +\frac{1}{a} e.
\end{align*}
 
    \end{lemma}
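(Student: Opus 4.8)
The plan is to mirror the proof of Lemma~\ref{lemma:IAGConvegenceTrickResult} almost verbatim, since the only new ingredient relative to that lemma is the constant error term $e$, which will simply accumulate additively and then normalize to the clean term $e/a$. First I would sum the recursive inequality over $k=0,\ldots,K$, giving
\[
\sum_{k=0}^{K} V_{k+1} \leq \sum_{k=0}^{K} V_k - a\sum_{k=0}^{K}\Theta_k - b\sum_{k=0}^{K} w_k + c\sum_{k=0}^{K}\sum_{j=k-\tau}^{k} w_j + (K+1)e,
\]
where the last term arises from $\sum_{k=0}^{K} e = (K+1)e$. The sums $\sum_{k=0}^{K} V_{k+1}$ and $\sum_{k=0}^{K} V_k$ form a telescoping series collapsing to $V_0 - V_{K+1}$, so after rearranging I can isolate $a\sum_{k=0}^{K}\Theta_k$.

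Next I would control the double sum. Writing out the inner windows $[k-\tau,k]$ explicitly and using the hypothesis $w_k = 0$ for $k<0$, the negative-index contributions vanish and each nonnegative $w_j$ with $j\in[0,K]$ is counted in at most $\tau+1$ of these windows, yielding $\sum_{k=0}^{K}\sum_{j=k-\tau}^{k} w_j \leq (\tau+1)\sum_{k=0}^{K} w_k$. Combining this with the telescoped inequality gives
\[
a\sum_{k=0}^{K}\Theta_k \leq (V_0 - V_{K+1}) - \bigl(b - c(\tau+1)\bigr)\sum_{k=0}^{K} w_k + (K+1)e.
\]

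Finally, the assumptions $b - c(\tau+1)\geq 0$ and $w_k\geq 0$ make the middle term nonpositive, so it can be discarded to obtain $a\sum_{k=0}^{K}\Theta_k \leq (V_0 - V_{K+1}) + (K+1)e$; dividing through by $a(K+1)$ yields the claimed bound
\[
\frac{1}{K+1}\sum_{k=0}^{K}\Theta_k \leq \frac{1}{a}\frac{1}{K+1}(V_0 - V_{K+1}) + \frac{1}{a}e.
\]
The argument is essentially routine bookkeeping, and I do not expect a genuine obstacle: the only step that requires care is the window-counting bound on the double sum, which I would verify by the same explicit expansion as in Lemma~\ref{lemma:IAGConvegenceTrickResult} to confirm both that the negative-index terms drop out and that each $w_j$ is tallied no more than $\tau+1$ times.
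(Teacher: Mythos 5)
Your proposal is correct and follows exactly the paper's route: the paper proves this lemma by invoking the argument of Lemma~\ref{lemma:IAGConvegenceTrickResult} (sum, telescope, bound the double sum by $(\tau+1)\sum_{k=0}^{K}w_k$ using $w_k=0$ for $k<0$, discard the nonpositive term via $b-c(\tau+1)\geq 0$), with the only new feature being the error term accumulating to $(K+1)e$ and normalizing to $e/a$, precisely as you describe.
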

    \begin{proof}
    Following the proof in Lemma \ref{lemma:IAGConvegenceTrickResult} yields the result.  
    \end{proof}

    Now, we are ready to derive the convergence rate. From the Lipschitz continuity assumption of the gradient of $f$ and from the fact that $2\langle x,y \rangle= \left\|x\right\|^2+\left\|y\right\|^2-\left\|x-y\right\|^2$ for $x,y\in\mathbb{R}^d$, 
    \begin{align*}
        f(x_{k+1}) - f^\star
        & \leq f(x_k) - f^\star - \frac{\gamma}{2}\left\| \nabla f(x_k) \right\|^2 - \left(\frac{\gamma}{2} - \frac{\bar L\gamma^2}{2} \right) \left\| \tilde g_k \right\|^2 \\
&\hspace{0.5cm}  + \frac{\gamma}{2}\left\| \tilde g_k -\nabla f(x_k) \right\|^2 ,
    \end{align*}
   Taking the expectation over the randomness and using Lemma \ref{lemma:QIAGNSC2} yields 
    \begin{align*}
      \mathbf{E} f(x_{k+1}) - f^\star 
      & \leq \mathbf{E} f(x_k) - f^\star - \frac{\gamma}{2}\mathbf{E}\left\| \nabla f(x_k) \right\|^2 - \left(\frac{\gamma}{2} - \frac{\bar L\gamma^2}{2} \right)  \mathbf{E}\left\| \tilde g_k \right\|^2  \\
      &\hspace{0.4cm}+ \bar L^2 \gamma^3\tau \sum_{j=k-\tau}^{k-1} \mathbf{E} \| \tilde g_j \|^2 + \gamma \beta\sigma m C^2
    \end{align*}
    Next, applying Lemma \eqref{lemma:trickMAinQIAGNSCWithErr} with $V_k = \mathbf{E} f(x_k) - f^\star, \Theta_k =\mathbf{E}\| \nabla f(x_k)\|^2, w_k = \mathbf{E}\|\tilde g_k\|^2, e = \gamma\beta\sigma m C^2, a=\gamma/2, b= \gamma/2-\bar L\gamma^2/2,$ and $c = \bar L^2\tau\gamma^3$ yields the result. 
    \begin{align*}
           \mathop{\min}\limits_{k\in[0,K]} \mathbf{E}\| \nabla f(x_k) \|^2 \leq \frac{1}{a}\frac{1}{K+1}\left( f(x_0) - f^\star\right)- \frac{1}{a}\frac{1}{K+1}\left(f(x_k) - f^\star \right) + \frac{1}{a}e.
    \end{align*}
    Note that $w_k =0$ for $k< 0$ since $\mathbf{E}\| \tilde g_k \|^2= \mathbf{E}\| x_{k+1}-x_k\|^2/\gamma^2.$  Lastly, we need to find the admissible range of the step-size which guarantees the convergence. The following criteria must be satisfied: $b>0$ and $b-c(\tau+1)\geq 0$. The first criterion implies that $\gamma <1/\bar L$. The second criterion implies that
    \begin{align*}
     \frac{\gamma}{2} - \frac{\bar L \gamma^2}{2} - \bar L^2 \tau(\tau+1) \gamma^3 \geq 0. 
    \end{align*}
    Lastly, let $\gamma = 1/(\bar L +\omega)$ for $\omega >0$ and plugging the expression into the result yields 
    \begin{align*}
        \omega^2 + \bar L \omega - 2 \bar L^2 \tau(\tau+1) \geq 0,
    \end{align*}
    and therefore 
    \begin{align*}
        \omega \geq \left(-1 + \sqrt{1+8\tau(\tau+1)}\right)\frac{\bar L}{2}. 
    \end{align*}
    Thus, we can conclude that the admissible range of the step-size is 
    \begin{align*}
        \gamma < \frac{1}{1+\sqrt{1+8\tau(\tau+1)}}\frac{2}{\bar L}.
    \end{align*}

\section*{Acknowledgments}
   This work was partially supported by the Wallenberg AI, Autonomous Systems and Software Program (WASP) funded by the Knut and Alice Wallenberg Foundation.


\end{document}